\documentclass[11pt, a4paper]{article}
\usepackage{latexsym}
\usepackage{amsmath,amsfonts,amssymb,mathrsfs,graphicx}
\usepackage{amsthm}
\usepackage{xcolor}
\usepackage{tikz}

\usepackage{bm}

\hoffset=-1.5cm\voffset=-2.5cm 
\setlength{\textwidth}{16cm}
\setlength{\textheight}{24cm}

\newcommand{\E}{\mathbb{E}}
\renewcommand{\P}{\mathbb{P}}
\renewcommand{\epsilon}{\varepsilon}

\DeclareMathOperator{\eps}{\varepsilon}
\DeclareMathOperator{\diam}{diam}

\DeclareMathOperator{\R}{\mathbb{R}}
\DeclareMathOperator{\N}{\mathbb{N}}
\DeclareMathOperator{\bb}{\mathbf{b}}

\newcommand{\ba}{\mathbf{a}}
\newcommand{\bi}{\mathbf{i}}
\newcommand{\bj}{\mathbf{j}}

\newcommand{\bl}{\mathbf{l}}
\newcommand{\e}{\epsilon}







\newcommand\ubd{\overline{\mbox{\rm dim}}_{\rm B}\,} 
\newcommand\lbd{\underline{\mbox{\rm dim}}_{\rm B}\,} 

\newcommand\bdd{\mbox{\rm dim}_{\rm B}}
\newcommand\pkd{\mbox{\rm dim}_{\rm P}\,} 
\newcommand\hdd{\mbox{\rm dim}_{\rm H}\,} 



\newcommand{\be}{\begin{equation}} 
\newcommand{\ee}{\end{equation}} 
\renewcommand{\i}{{\bf i}} 

\newtheorem{theo}{Theorem}[section]
\newtheorem{cor}[theo]{Corollary}
\newtheorem{lem}[theo]{Lemma}
\newtheorem{prop}[theo]{Proposition}

\theoremstyle{definition}

\title{Box-counting dimension in one-dimensional random geometry of multiplicative cascades}
\author{Kenneth J. Falconer$^1$ and Sascha Troscheit$^{2,}$\thanks{ST was funded by Austrian Research Fund
  (FWF) Grant M-2813.}\\\\
  \textit{$^1$Mathematical Institute,}\\ 
  \textit{University of St Andrews,}\\
  \textit{St Andrews, KY16 9SS, Scotland.}\\
  \texttt{kjf@st-andrews.ac.uk}\\\\
  \textit{$^2$Faculty of Mathematics,}\\
  \textit{University of Vienna,}\\
  \textit{Oskar Morgenstern Platz 1, 1090 Wien,
  Austria.}\\
\texttt{sascha@troscheit.eu}}

\date{\today}

\begin{document}
\maketitle
\allowdisplaybreaks

\begin{abstract}
  We investigate the box-counting dimension of the image of a set $E \subset \mathbb{R}$ under a
  random multiplicative cascade function $f$. The corresponding result for Hausdorff dimension was
  established  by Benjamini and Schramm in the context of random geometry, and for sufficiently
  regular sets, the same formula holds for the box-counting dimension. However, we show that this is far
  from true in general, and we compute explicitly a formula of a very different nature that gives the
  almost sure box-counting dimension of the random image $f(E)$ when the set $E$ comprises a
  convergent sequence. In particular, the box-counting dimension of $f(E)$ depends more subtly on $E$
  than just on its dimensions. We also obtain lower and upper bounds for the box-counting dimension
  of the random images for general sets $E$.
  \renewcommand{\thefootnote}{\fnsymbol{footnote}} 
  \footnotetext{\emph{Key words:} multiplicative cascades, box-counting dimension, random geometry}     
  \footnotetext{\emph{MSC (2020):} Primary: 60G57; Secondary: 28A80, 83C45}
  \renewcommand{\thefootnote}{\arabic{footnote}} 
\end{abstract}

\section{Introduction}
The random multiplicative cascade is a well-studied random measure on the unit cube in $d$-dimensional
Euclidean space. It originally arose in Mandelbrot's study of turbulence \cite{Mandelbrot74}
but has since been investigated in its own right, see e.g. \cite{Barral10,
Barral10a,Barral14,Barral18,Falconer16,Jin14,Kahane76,Molchan04}.
In one dimension the measure may be constructed iteratively by subdividing the unit line into dyadic intervals,
multiplying the length of each subdivision by an i.i.d.\ copy of a common positive random variable $W$ with
mean $\E(W)=1$. The resulting measure $\mu$ can alternatively be thought of in terms of its cumulative distribution function
$f(x)=\mu([0,x))$ which may also be interpreted as a random metric by setting $d(x,y) =
|f(x)-f(y)|$. The latter approach was picked up as a model for quantum gravity by Benjamini and
Schramm \cite{Benjamini09}, who analysed the change in Hausdorff dimension of deterministic subsets
$E\subset[0,1]$
under the random metric, or equivalently, its image under $f$ with the Euclidean
metric. They obtained an elegant formula for the almost sure Hausdorff dimension $s$ of $F$
with respect to the random metric in terms of the Hausdorff dimension $d$ of $F$ in the Euclidean metric
and the moments of $W$:
\begin{equation}\label{eq:Benjamini}
  2^d = \frac{2^s}{\E(W^s)}.
\end{equation}
Further, when $W$ has a log-normal distribution, they showed that
the formula reduces to the
famous KPZ equation, first established by Knizhnik, Polyakov, and Zamolodchikov \cite{Knizhnik88}, 
that links the dimensions of an object in deterministic and quantum gravity
metrics.
Barral et al.\ \cite{Barral14} removed some of the assumptions of Benjamini and Schramm, and
Duplantier and Sheffield \cite{Duplantier11} studied the same phenomenon in another popular model of quantum gravity,
Liouville quantum gravity. Duplantier and Sheffield show that a KPZ formula holds for the \textit{Euclidean
expectation dimension}, an ``averaged'' box-counting type dimension.

Using dimensions to study random geometry has a fruitful history, see e.g.
\cite{Aldous91, Benjamini09, Ding20, Gwynne20,LeGall13, Troscheit21}, which use dimension theory
in their methodology.
Whilst much of the literature in random geometry considers Hausdorff dimension or other
`regular' scaling dimensions, box-counting dimensions have not been explored as thoroughly. In
part this may be due to the more complicated geometrical properties of box-counting dimension of a set,
manifested, for instance, in its projection properties, see \cite{fal:2021}. 

One might hope that a formula analogous to \eqref{eq:Benjamini} would also hold for the box-counting
dimension of images of sets under the cascade function $f$. We investigate this question and find
that this need not be the case for sets that are not sufficiently
homogeneous. We give bounds that are valid for the box-counting dimensions of $f(E)$ for general
sets $E$, and then in Theorems \ref{thm:dimea} and \ref{thm:dimgen} give an exact formula for the
box dimension of $f(E)$ for a large family of sets of a very different form from
\eqref{eq:Benjamini}.

We remark that the study of dimensions of the images of sets under various random functions goes
back a considerable time. For example, with $B_\alpha:\R\to\R$ as index-alpha fractional Brownian
motion, $\hdd B_\alpha (E) = \min\{1,\frac{1}{\alpha}\hdd E\}$, see Kahane \cite{Kah}. On the other
hand, the corresponding result for packing and box-counting dimensions is more subtle, depending on
`dimension profiles', as demonstrated by Xiao \cite{Xi}.

\subsection{Notation and definitions}
This section introduces random multiplicative cascade functions and dimensions along with the notation that we shall use.
We will use finite and infinite words from the alphabet $\{0,1\}$ throughout.  We write finite words as
$\bi= i_1i_2\ldots i_k \in\{0,1\}^k$ for $k\in \mathbb{N}$ with $\varnothing$ as the empty word,
with  $\{0,1\}^* =\bigcup_0^\infty\{0,1\}^k$, and $\bi= i_1i_2\ldots \in\{0,1\}^\mathbb{N}$ for the
infinite words. We combine words by juxtaposition, and write $|\bi|$ for the length of a finite
word.

For $\bi= i_1i_2\ldots i_k \in\{0,1\}^k$  let $I_\bi$ denote the dyadic interval 
$$I_\bi = \Big[\sum_{j=1}^k 2^{-j} i_j, \sum_{j=1}^k 2^{-j} i_j +2^{-k}\Big),$$ 
taking the rightmost intervals $[1-2^{k},1]$ to be
closed. We denote the set of such dyadic intervals of lengths $2^{-k}$ by ${\mathcal I}_k$. Note
that every interval of  ${\mathcal I}_k$ is the union of exactly two disjoint intervals in
${\mathcal I}_{k+1}$.

Underlying the random cascade construction is a random variable $W$, with $\{W_{\bi}
: \bi \in \{0,1\}^*\}$ a tree of independent random variables with the distribution of $W$. We will
assume throughout that $W$ is positive, not almost-surely constant and that
\be\label{standassump} 
\E(W)=1\quad\text{and}\quad \E(W\log_2 W)\leq1.
\ee
Note $\E(W\log_2 W) \leq1$ implies $\E(W^t) < \infty$ for $t\in[0,1]$.

We differentiate between the subcritical regime when $\E(W\log_2W)<1$ and
the critical regime when $\E(W\log_2 W)=1$.
Unless otherwise noted, we assume the subcritical regime. 
Here, the length of the random image  $f([0,1])$ is given by 
$$ 
L := |f([0,1])| = \mu[0,1]= \lim_{k\to\infty}\sum_{\bi \in\{0,1\}^k}2^{-k}W_{ i_1}\dots W_{i_1 \dots i_k},
$$
where $|A|$ denotes the diameter of a set $A$, and with $\mu$ the (subcritical) random cascade measure.
Comprehensive accounts of the properties of $L$ can be found in \cite{Benjamini09} and
\cite{Kahane76}, in particular the assumption that $\E(W\log_2 W)<1$ implies that $L$ exists and
$0<L<\infty$ almost surely and $\E(L)=1$.
Similarly, the length of the random image of the interval $I_\bi \in {\mathcal I}_k$ is given by
$$
|f(I_\bi)| = \mu(I_\bi)= 2^{-k}W_{i_1}\dots W_{ i_1 \dots i_k} L_{\bi} \quad \text{ where } \quad 
L_{\bi} := \lim_{n\to\infty}\sum_{\bj \in\{0,1\}^n} 2^{-n}W_{\bi j_1}\dots W_{\bi j_1 \dots j_n}
$$
has the distribution of $L$, independently for $\bi \in\{0,1\}^k$ for each fixed $k$. The random
{\em multiplicative cascade measure} $\mu$ on $[0,1]$ is obtained by extension from the
$\mu(I_\bi)$. Almost surely,  $\mu$ has no atoms and $\mu(I)>0$ for every interval $I$, so the
associated random {\em multiplicative cascade function} $f:[0,1] \to \mathbb{R}^{\geq 0}$ given by
$f(x)=\mu([0,x))$ is almost surely strictly increasing and continuous. We do not need to refer to
$\mu$ further and will work entirely with $f$.

In the critical regime a similar measure exists. In particular, normalising with $\sqrt{k}$ gives
\[
  L = |f([0,1])| = \mu[0,1] = \lim_{k\to\infty}\sqrt{k}\sum_{\bi \in\{0,1\}^k}2^{-k}W_{ i_1}\dots
  W_{i_1 \dots i_k},
\]
where the convergence is in probability. The random limit $L$ exists and $0<L<\infty$ almost surely
under the additional assumption that $\E(W\log^2 W)<\infty$, see \cite{Boutaud19}.
Here $\E(L)=\infty$, unlike the subcritical case. 
The associated measure $\mu$ is therefore finite almost surely, and it was shown
in \cite{Barral14} that this measure almost surely has no atoms.
We refer the reader to \cite{Barral14} for a detailed account of critical Mandelbrot cascades.
Note further that the length of the random image of the interval $I_{\bi}$ is given by
\[
  |f(I_\bi)| = \mu(I_\bi)= \sqrt{k}\cdot2^{-k}W_{i_1}\dots W_{ i_1 \dots i_k} L_{\bi}, 
\]
where $L_{\bi}$ is a random variable that is equal to $L$ in distribution (and hence has infinite
mean).

Note that while we will consider image sets $f(E)$ as subsets of $\R$ with the Euclidean metric, equivalently one could
define a random metric $d_W$ by setting $d_W(x,y) = |f(x)-f(y)| = \mu([x,y])$ and
investigate $(E,d_W)$ instead. For more details on such alternative interpretations, see \cite{Benjamini09}.

The Hausdorff dimension $\hdd$ is the most commonly considered form of fractal dimension. The {\em
Hausdorff dimension} of a subset $E$ of a metric space $(X,d)$  may be defined as
\[
  \hdd E = \inf\Big\{ \alpha >0 : \text{ for all } \eps>0,  \text{ there is a cover }
  (U_i)_{i=1}^{\infty} \text{ of } E \text{ such that }\sum_{i=1}^\infty \diam(U_i)^\alpha <\eps \Big\}.
\]
Perhaps more intuitive are the box-counting dimensions. Let $(X,d)$ be a 
metric space and $E\subset X$ be non-empty and bounded. Write $N_r(E)$ for the minimal number of
sets of diameter at most $r>0$ needed to
cover $E$. The {\em upper} and {\em lower box-counting dimensions} (or {\em box dimensions}) are given by
\[
  \lbd E = \liminf_{r\to0}\frac{\log N_{r}(E)}{-\log r}, \qquad
  \ubd E = \limsup_{r\to0}\frac{\log N_r(E)}{-\log r}.
\]
If this limit exists, we speak of the {\em box-counting dimension} $\bdd E$ of $E$. Note that 
whilst many `regular' sets (such as Ahlfors regular sets) have equal Hausdorff and box-counting
dimension this is not true in general.

\subsection{Statement of results}

Our aim is to find or estimate the dimensions of $f(E)$ where $f$ is the random
cascade function and $E\subset [0,1]$. Note that these dimensions are tail events, since changing
$\{W_{\bi} : |\bi \leq k\}$ for a fixed $k$ results in just a bi-Lipschitz distortion of the set
$f(E)$. This implies that the Hausdorff and upper and lower box-counting dimensions of $f(E)$ each
take an almost sure value. 

Benjamini and Schramm  established the formula for the Hausdorff dimension.

\begin{theo}[Benjamini, Schramm \cite{Benjamini09}]\label{thm:Benjamini}
  Let $f$ be the distribution of a subcritical random cascade.
  Suppose that $\E(W^{-t})<\infty$ for all $t\in[0,1)$ in addition to the standard assumptions \eqref{standassump}.
  Let $E\subset [0,1]$ and write $d_E = \hdd E$. 
  Then the almost sure Hausdorff dimension $\hdd f(E)$ of the random image of $E$
  is the unique value $s$ that satisfies
  \begin{equation}\label{eq:BenjaminiFormula}
    2^{d_E} = \frac{2^{s}}{\E(W^{s})}.
  \end{equation}
\end{theo}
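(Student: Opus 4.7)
The plan is to verify that \eqref{eq:BenjaminiFormula} admits a unique solution $s \in [0,1]$ and then prove matching almost-sure upper and lower bounds for $\hdd f(E)$. For uniqueness I set $g(s) = \log_2 \E(W^s)$, so the right-hand side of \eqref{eq:BenjaminiFormula} equals $2^{s - g(s)}$. Here $g$ is convex on $[0,1]$ with $g(0) = g(1) = 0$ (from $\E(W) = 1$), so $g'$ is nondecreasing with $g'(0) = \E(\log_2 W) \leq 0$ by Jensen and $g'(1) = \E(W\log_2 W) < 1$ by subcriticality; hence $1 - g'(s) > 0$ on $[0,1)$ and $s \mapsto s - g(s)$ is a strictly increasing bijection of $[0,1]$ onto itself, yielding a unique $s = s(d_E) \in [0,1]$ with $s \leq d_E$ (since $g \leq 0$).

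For the upper bound, fix $t > d_E$ and let $s^+ = s(t) > s$. Since $\hau{t}(E) = 0$, for each $\eta > 0$ there is a cover of $E$ by dyadic intervals $\{I_{\bi_j}\}_{j}$ with $\min_j |\bi_j| \geq K$ (arbitrarily large) and $\sum_j |I_{\bi_j}|^t < \eta$. Exploiting the independence of the ancestor weights along a branch from the subtree limit $L_{\bi_j}$, one computes
\begin{equation*}
  \E\Big[\sum_j |f(I_{\bi_j})|^{s^+}\Big]
  = \E(L^{s^+}) \sum_j \big(2^{-s^+}\E(W^{s^+})\big)^{|\bi_j|}
  = \E(L^{s^+}) \sum_j |I_{\bi_j}|^t \leq \eta,
\end{equation*}
using the defining identity $2^{-s^+}\E(W^{s^+}) = 2^{-t}$ and $\E(L^{s^+}) \leq 1$ (Jensen applied to $\E(L) = 1$). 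Since $f$ is almost surely uniformly continuous on $[0,1]$, choosing $K$ large makes all $|f(I_{\bi_j})|$ uniformly small, so Markov's inequality combined with Borel--Cantelli along a sequence $\eta_n \to 0$ forces $\hau{s^+}(f(E)) = 0$ almost surely. Sending $t \downarrow d_E$ along a countable sequence gives $\hdd f(E) \leq s$ a.s.

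For the lower bound, fix $t < d_E$ with $s^- = s(t) < s$ and invoke Frostman's lemma to obtain a probability measure $\nu$ on $E$ with $\nu(I) \leq C|I|^t$ for every interval $I$. The idea is to push $\nu$ forward to $f_*\nu$ on $f(E)$ and apply the mass distribution principle. Since $f$ is monotone and continuous, $f_*\nu(B(y,r))$ equals $\nu(J_{y,r})$ for the preimage interval $J_{y,r}$, which satisfies $\mu(J_{y,r}) \leq 2r$. The strategy is to decompose $J_{y,r}$ into $O(1)$ dyadic intervals $I_{\bi}$ maximal subject to $\mu(I_{\bi}) \leq 2r$, use the Frostman bound $\nu(I_{\bi}) \leq C 2^{-|\bi| t}$, and combine it with an almost-sure uniform lower bound of the form $\mu(I_{\bi}) \geq 2^{-|\bi|(1+\delta)}$ valid for all sufficiently deep dyadic $\bi$ meeting $E$; this would give $f_*\nu(B(y,r)) \leq C'(\omega)\, r^{s^- - O(\delta)}$, hence $\hdd f(E) \geq s^- - O(\delta)$ a.s.

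The main obstacle is producing this uniform cascade estimate. A direct energy argument via the bound $|f(u)-f(v)| \geq \mu(I_{\bi})$ on a single enclosed dyadic sub-interval fails: the resulting exponent in $|u-v|^{-1}$ is $s^- + \log_2 \E(W^{-s^-})$, which strictly exceeds $t$ for non-constant $W$ because $\log_2(\E(W^{s^-})\E(W^{-s^-})) > 0$ by strict Cauchy--Schwarz, so the energy diverges. The way through is to avoid the worst cascade ``outliers'' by exploiting the hypothesis $\E(W^{-\tau}) < \infty$ for $\tau \in [0,1)$: a stopping-time / Peyri\`ere-measure argument localized to branches visiting $\mathrm{supp}(\nu)$ yields the required simultaneous lower bounds on $\mu(I_{\bi})$ along those branches. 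Assembling this with the mass distribution step and letting $t \uparrow d_E$, $\delta \downarrow 0$ gives $\hdd f(E) \geq s$ a.s., completing the proof.
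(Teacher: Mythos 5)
First, note that the paper does not prove this statement at all: it is quoted verbatim from Benjamini and Schramm \cite{Benjamini09}, so there is no internal proof to compare against. Judged on its own merits, your uniqueness argument and your upper bound are correct and standard: the identity $2^{-s^+}\E(W^{s^+})=2^{-t}$ together with $\E(L^{s^+})\le 1$ does give $\E\big[\sum_j|f(I_{\bi_j})|^{s^+}\big]\le\sum_j|I_{\bi_j}|^t$, and Markov plus Borel--Cantelli then yields $\hdd f(E)\le s$ almost surely.

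The lower bound, however, contains a genuine gap, and the key estimate you lean on is in fact false. You propose to combine the Frostman bound $\nu(I_\bi)\le C2^{-|\bi|t}$ with a uniform almost-sure bound $\mu(I_\bi)\ge 2^{-|\bi|(1+\delta)}$ for all sufficiently deep dyadic $\bi$ meeting $E$ (or $\mathrm{supp}\,\nu$), and then let $\delta\downarrow 0$. For non-constant $W$ this cannot hold: along any fixed branch the strong law gives $\tfrac1k\log_2\big(2^{-k}W_{i_1}\cdots W_{\bi|k}\big)\to -1+\E(\log_2 W)=-(1+\gamma)$ with $\gamma=-\E(\log_2 W)>0$ by strict Jensen, so the \emph{typical} behaviour is $\mu(I_\bi)\approx 2^{-k(1+\gamma)}$, already violating $2^{-k(1+\delta)}$ for every $\delta<\gamma$. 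The problem is not ``worst-case outliers'' that a stopping-time or Peyri\`ere-measure localization could excise --- Peyri\`ere tilting describes $\mu$-typical branches, not branches through a prescribed deterministic set $E$, along which the weights are unbiased i.i.d. There is also a structural warning sign: if your uniform bound were available for all small $\delta$, the mass-distribution step would give $f_*\nu(B(y,r))\lesssim r^{t/(1+\delta)}$ and hence $\hdd f(E)\ge d_E$ in the limit, which exceeds $s$ (since $\E(W^s)<1$ forces $s<d_E$) and contradicts your own upper bound. The correct exponent must come from balancing the full distribution of $\mu(I_\bi)$ over the $\approx 2^{kd_E}$ intervals meeting $E$ --- this balance is precisely what produces $2^{d_E}=2^s/\E(W^s)$ rather than a linear relation. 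Benjamini and Schramm's actual argument avoids this route entirely: they pull back efficient covers of $f(E)$ to covers of $E$ and control $\E\big(|f^{-1}(A)|^{t'}\big)$ by a first-moment computation, which is exactly where the hypothesis $\E(W^{-t})<\infty$ enters; your sketch never genuinely uses that hypothesis, which is another indication that the essential step is missing.
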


Note that the expression on the right in $\eqref{eq:BenjaminiFormula}$ is continuous in $s$ and
strictly increasing, mapping $[0,1]$ onto $[1,2]$, see  \cite[Lemma 3.2]{Benjamini09}.

This result was improved upon by Barral et al.\ who also proved the result for the critical cascade measure.
\begin{theo}[Barral, Kupiainen, Nikula, Saksman, Webb \cite{Barral14}]\label{thm:Barral}
  Let $f$ be the distribution of a subcritical or critical random cascade.
  Assume that $\E(W^{-t})<\infty$ for all $t\in(0,\tfrac12)$ and $\E(W^{1+\eps})<\infty$ for some $\eps>0$.
  Let $E\subset[0,1]$ be some Borel set
  with Hausdorff dimension $d_E = \hdd E$.
  Then the almost sure Hausdorff dimension $\hdd f(E)$ of the random image of $E$
  is the unique value $s$ that satisfies
  \[
    2^{d_E} = \frac{2^{s}}{\E(W^{s})}.
  \]
\end{theo}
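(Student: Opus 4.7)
Write $\sigma(s):= s-\log_2 \E(W^s)$, so that the defining equation $2^{d_E}=2^{s^*}/\E(W^{s^*})$ reads $\sigma(s^*)=d_E$. Since $\sigma$ is continuous and strictly increasing on $[0,1]$ (as already noted after Theorem \ref{thm:Benjamini}), and since $\hdd f(E)$ is a tail event, it suffices to prove that for every $\eta>0$, $\sigma(s)>d_E$ implies $\hdd f(E)\le s$ a.s.\ and $\sigma(t)<d_E$ implies $\hdd f(E)\ge t$ a.s., then let $s\downarrow s^*$ and $t\uparrow s^*$ through countable sequences.

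\textbf{Upper bound.} Fix $s$ with $\sigma(s)>d_E+\eta$. For each $m\in\N$, cover $E$ by dyadic intervals $\{I_{\bi_j}\}$ of generations $|\bi_j|\ge m$ with $\sum_j 2^{-|\bi_j|(d_E+\eta)}<1$; this is possible because $\hdd E=d_E$ and a general cover can be replaced by a dyadic one up to a bounded constant. The images $f(I_{\bi_j})$ cover $f(E)$, and the multiplicative structure of $|f(I_{\bi_j})|$ together with the independence of $L_{\bi_j}$ from the ancestor multipliers gives
\[
\E\sum_j |f(I_{\bi_j})|^s \;=\; \E(L^s)\sum_j 2^{-|\bi_j|\sigma(s)},
\]
which tends to $0$ as $m\to\infty$. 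Here $\E(L^s)<\infty$ for $s\in[0,1)$ by the moment hypotheses in both regimes. Borel--Cantelli along a countable sequence of $m$ then yields $\hdd f(E)\le s$ almost surely; in the critical regime the additional $\sqrt{|\bi_j|}$ factor in $|f(I_{\bi_j})|$ is a polynomial correction absorbed by the exponential decay.

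\textbf{Lower bound.} Fix $t$ with $\sigma(t)<d_E$ and set $d':=\sigma(t)$. By Frostman's lemma there is a probability measure $\nu$ on $E$ with finite $d'$-energy. The naive plan of showing that the push-forward $f_*\nu$ has finite $t$-energy fails because the direct moment bound
\[
\E\bigl[\mu([x,y])^{-t}\bigr]\;\lesssim\; 2^{|\bi|t}\bigl(\E(W^{-t})\bigr)^{|\bi|} \quad\text{for } |x-y|\asymp 2^{-|\bi|}
\]
carries the \emph{wrong} exponent $t+\log_2 \E(W^{-t})$ rather than the desired $\sigma(t)=t-\log_2 \E(W^t)$. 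The classical remedy of Kahane and Peyri\`ere is to replace $\nu$ by a \emph{biased} measure in which the multipliers along each dyadic branch carry an extra factor $W^\theta/\E(W^\theta)$ for a suitably chosen $\theta$, producing a positive martingale whose limit is supported on $f(E)$. A computation with the biased moments -- exploiting $\E(W^{-t})<\infty$ for $t\in(0,\tfrac12)$ -- shows that the $t$-energy of the limit has finite expectation and hence is finite a.s., so that $\hdd f(E)\ge t$ follows from Frostman's theorem.

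\textbf{Main obstacle.} The hardest part, and the technical heart of \cite{Barral14}, is implementing the biased-measure construction in the \emph{critical} regime, where $\E(L)=\infty$, the Seneta--Heyde $\sqrt{k}$-normalisation is needed, and negative moments of $L$ are available only for $t<\tfrac12$ -- precisely what forces the assumption $\E(W^{-t})<\infty$ on the same range. Establishing the cascade-specific moment estimates uniformly over the tree, while handling boundary overlaps between adjacent dyadic branches and the polynomial normalisation, is where the delicate work lies.
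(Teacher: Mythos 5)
First, a remark on context: the paper does not prove this statement at all --- it is quoted verbatim as Theorem~\ref{thm:Barral} from \cite{Barral14} and used as a black box (e.g.\ in Corollary~\ref{thm:homogeneous}). So there is no in-paper proof to compare against, and your proposal has to stand on its own. Your upper bound does: covering $E$ by dyadic intervals of generation at least $m$ with $\sum_j 2^{-|\bi_j|(d_E+\eta)}<1$, the identity $\E\,|f(I_{\bi})|^s=\E(L^s)\,2^{-|\bi|\sigma(s)}$ (with the harmless $|\bi|^{s/2}$ correction in the critical case) and $\E(L^s)<\infty$ for $s<1$ give $\E\sum_j|f(I_{\bi_j})|^s\le \E(L^s)\,2^{-m(\sigma(s)-d_E-\eta)}\to 0$, hence $\mathcal{H}^s(f(E))=0$ a.s.\ whenever $\sigma(s)>d_E$. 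That is a complete and correct first-moment argument.

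The lower bound, however, is a genuine gap: you have correctly diagnosed the obstruction (the naive energy bound produces the exponent $t+\log_2\E(W^{-t})$, which by Jensen is strictly larger than $\sigma(t)=t-\log_2\E(W^t)$ unless $W$ is constant), but diagnosing it is where your argument stops. You invoke ``a biased measure in which the multipliers carry an extra factor $W^\theta/\E(W^\theta)$'' without defining the tilted cascade, without showing that the resulting martingale limit is nondegenerate and charges $f(E)$ (this requires the Frostman measure $\nu$ on $E$ to interact with the tilt, and the choice of $\theta$ must be tied to $t$ and $d_E$ through the very equation $\sigma(t)=d_E$ you are trying to establish), and without performing the energy or local-dimension computation that is supposed to ``show that the $t$-energy of the limit has finite expectation.'' None of these steps is routine: the whole content of the theorem is that the correct exponent survives this construction, and the hypotheses $\E(W^{-t})<\infty$ for $t\in(0,\tfrac12)$ and $\E(W^{1+\eps})<\infty$ must be seen to enter at specific points (lower-tail bounds on $\mu(I_{\bi})$ and uniform integrability of the tilted martingale, respectively). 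In the critical regime the difficulties compound, as you note, but acknowledging that the Seneta--Heyde normalisation and the restricted negative moments of $L$ are ``where the delicate work lies'' is an admission that the work has not been done. As submitted, the lower half of the proof is a research plan pointing at \cite{Barral14} rather than a proof.
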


\subsubsection{General bounds for box-counting dimensions of images}
Our first result is that the upper box-counting dimension of $E$ is bounded above by a value
analogous to that in  \eqref{eq:BenjaminiFormula}, though the assumption that
$\E(W^{-t})<\infty$ for $t>0$ is not required here for subcritical cascades.

\begin{theo}{\rm (General upper bound)}\label{thm:upper}
  Let $f$ be the distribution of a subcritical random cascade or the distribution of a critical
  random cascade with the additional
  assumption that $\E(W^{-t})<\infty$ for some $t>0$ and $E(W \log^2W)<\infty$.
  Let $E\subset[0,1]$ be non-empty and compact and let  $d_E=\ubd E$. Then almost surely $\ubd f(E)
  \leq  s$ where $s$ is the unique non-negative number satisfying
  \be\label{upbound}
  2^{d_E} = \frac{2^s}{\E(W^s)}.
  \ee 
\end{theo}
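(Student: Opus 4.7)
The plan is to dominate $N_r(f(E))$ by the cardinality of a random ``stopping family'' of dyadic intervals whose $f$-images just fit into a length-$r$ interval, estimate the expectation of this cardinality via an $s'$-moment sum with $s' > s$, and lift the mean bound to an almost sure bound by Markov's inequality and Borel--Cantelli. The defining equation $2^{d_E} = 2^s/\E(W^s)$ is exactly what ensures geometric decay in the dyadic level.

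Fix $s' \in (s, 1]$; the case $d_E = s = 1$ is trivial since $f(E) \subset \R$. Set $\phi(t) := t - \log_2 \E(W^t)$: this is concave on $[0,1]$ with $\phi(0) = 0$, $\phi(1) = 1$, and $\phi(s) = d_E$, so $\phi(s') > d_E$. Choose $\eta \in (0, \phi(s') - d_E)$, and note that by the definition of upper box dimension, the number $N_k$ of $\bi \in {\mathcal I}_k$ with $I_\bi \cap E \neq \emptyset$ satisfies $N_k \leq 2^{k(d_E + \eta)}$ for all large $k$. Define the random stopping family
\[
\calT_r := \{\bi \in \{0,1\}^* \setminus \{\varnothing\} : |f(I_\bi)| \leq r < |f(I_{\bi^-})|\},
\]
where $\bi^-$ denotes the parent word of $\bi$. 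Because $\mu$ has no atoms almost surely, $\{I_\bi : \bi \in \calT_r\}$ is almost surely a countable partition of $[0,1]$ into intervals whose images have length at most $r$, whence $N_r(f(E)) \leq \#\{\bi \in \calT_r : I_\bi \cap E \neq \emptyset\}$.

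For each $\bi \in \{0,1\}^k$ with $I_\bi \cap E \neq \emptyset$, Markov's inequality applied to $|f(I_{\bi^-})|^{s'}$, together with the independence of the path product $W_{i_1} \cdots W_{i_1 \cdots i_{k-1}}$ from the subtree limit $L_{\bi^-}$, gives
\[
\P(\bi \in \calT_r) \leq \P(|f(I_{\bi^-})| > r) \leq r^{-s'} (k-1)^{s'/2} \, 2^{-(k-1)s'} \E(W^{s'})^{k-1} \E(L^{s'}),
\]
with the $(k-1)^{s'/2}$ factor present only in the critical regime. Using $2^{-s'}\E(W^{s'}) = 2^{-\phi(s')}$ and summing over $\bi$ and over $k$ yields
\[
\E[\#\{\bi \in \calT_r : I_\bi \cap E \neq \emptyset\}] \leq C r^{-s'} \sum_{k \geq 1} k^{s'/2} 2^{k(d_E + \eta - \phi(s'))} \leq C' r^{-s'}
\]
by our choice of $\eta$. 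Markov's inequality along $r_j = 2^{-j}$ and Borel--Cantelli then give, for every $\delta > 0$, that almost surely $N_{r_j}(f(E)) \leq r_j^{-s' - \delta}$ for all large $j$; monotonicity of $N_r$ extends this to all small $r$, and letting $\delta \downarrow 0$ and $s' \downarrow s$ delivers $\ubd f(E) \leq s$ almost surely.

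The main obstacle is verifying $\E(L^{s'}) < \infty$ for $s' \in (s, 1)$. In the subcritical case $\E(L) = 1$, so Jensen's inequality immediately gives $\E(L^{s'}) \leq 1$. In the critical case $\E(L) = \infty$, and one must combine the extra hypotheses $\E(W^{-t}) < \infty$ for some $t > 0$ and $\E(W \log^2 W) < \infty$ with the known moment estimates for critical cascades in \cite{Barral14, Boutaud19}; the $\sqrt{k}$ normalisation then only contributes the polynomial factor $k^{s'/2}$, which is comfortably absorbed by the geometric decay.
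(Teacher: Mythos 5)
Your proposal is correct and follows essentially the same route as the paper's proof: Markov's inequality applied to the $s'$-moment of $|f(I_{\bi})|$ combined with the level-$k$ covering count $2^{k(d_E+\eta)}$, a stopping family of dyadic intervals whose parents still have image length exceeding $r$, geometric decay of the resulting sum guaranteed by $\phi(s')>d_E$ (the paper phrases this as $2^{d-t}\E(W^t)<1$ and splits the sum at a level $k_0$, but the raw summation you use works equally well), and Borel--Cantelli along $r=2^{-j}$. The finiteness of $\E(L^{s'})$ in the critical case is handled in the paper by the same citations you indicate, so no gap remains.
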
 

Combining this result with Theorem \ref{thm:Barral} we get the immediate corollary for sets with
equal Hausdorff and (upper) box-counting dimension, such as Ahlfors regular sets.
\begin{cor}\label{thm:homogeneous}
  Let $f$ be the distribution of a subcritical or critical random cascade.
  Suppose additionally that $\E(W^{-t})<\infty$ for all $t\in(0,\tfrac12)$ and in the critical case assume
  also that $E(W\log^2 W)<\infty$. If $E\subset[0,1]$ is non-empty and compact,
  and $\hdd E = \ubd E=d_E$, then almost surely $\hdd f(E)=\ubd f(E)=s$
  where $s$ is given by \eqref{upbound}.
\end{cor}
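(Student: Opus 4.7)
The plan is to extract this as a direct consequence of the two theorems just stated. Since the hypotheses of the corollary are tailored to ensure the hypotheses of both Theorem \ref{thm:Barral} and Theorem \ref{thm:upper} are satisfied (in the critical case the assumption $\E(W\log^2 W)<\infty$ is explicitly included, and $\E(W^{-t})<\infty$ for $t\in(0,\tfrac12)$ is strong enough for both), no further probabilistic work is required; the argument is purely a dimension-theoretic squeeze.

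First, I would apply Theorem \ref{thm:Barral} with the compact set $E$ to deduce that almost surely
\[
\hdd f(E) = s,
\]
where $s$ is the unique non-negative number satisfying $2^{\hdd E} = 2^s/\E(W^s)$. Next, I would apply Theorem \ref{thm:upper} to conclude that almost surely
\[
\ubd f(E) \leq s',
\]
where $s'$ satisfies $2^{\ubd E} = 2^{s'}/\E(W^{s'})$. Because the map $t \mapsto 2^t/\E(W^t)$ is strictly increasing on $[0,1]$ (as noted after Theorem \ref{thm:Benjamini}) and the hypothesis $\hdd E = \ubd E = d_E$ is in force, these two equations have the same unique solution, so $s = s'$.

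Finally, I would invoke the general inequality $\hdd A \leq \lbd A \leq \ubd A$ valid for any bounded set $A$. Applied to $A = f(E)$, this gives almost surely
\[
s = \hdd f(E) \leq \lbd f(E) \leq \ubd f(E) \leq s,
\]
forcing equality throughout and yielding $\hdd f(E) = \bdd f(E) = s$ as claimed.

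There is essentially no obstacle here: the entire content is already packaged in Theorems \ref{thm:Barral} and \ref{thm:upper}. The only thing to watch is that the almost sure events in the two theorems are each of full measure, hence so is their intersection, on which the sandwich argument is carried out simultaneously.
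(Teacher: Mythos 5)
Your proof is correct and is exactly the argument the paper intends: the corollary is stated as an immediate combination of Theorem \ref{thm:Barral} (giving $\hdd f(E)=s$) and Theorem \ref{thm:upper} (giving $\ubd f(E)\leq s$ via the monotonicity of $t\mapsto 2^t/\E(W^t)$), squeezed by $\hdd \leq \lbd \leq \ubd$. No further comment is needed.
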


We can also apply Theorem \ref{thm:upper} to the packing dimension.
\begin{cor}
  Let $f$ be the distribution of a subcritical cascade.
  If $E\subset[0,1]$ is non-empty and compact and $d_E= \pkd E$, then almost surely $\pkd f(E)\leq s$
  where $s$ satisfies
  \[
    2^{d_E} = \frac{2^s}{\E(W^s)}.
  \] 
\end{cor}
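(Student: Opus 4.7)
The plan is to reduce the statement to Theorem \ref{thm:upper} via the classical characterisation of packing dimension as the \emph{modified upper box-counting dimension}. Recall that for any bounded set $A \subset \R$,
\[
\pkd A \;=\; \inf\Big\{\sup_i \ubd A_i : A \subset \bigcup_{i=1}^\infty A_i,\ A_i \text{ bounded}\Big\}.
\]
Consequently, for each rational $\epsilon>0$ we can choose a countable cover $E \subset \bigcup_i E_i^{(\epsilon)}$ with $\ubd E_i^{(\epsilon)} \leq d_E + \epsilon$ for every $i$. Replacing each $E_i^{(\epsilon)}$ by $\overline{E_i^{(\epsilon)}}\cap[0,1]$ (which preserves upper box-counting dimension) we may assume each $E_i^{(\epsilon)}$ is non-empty and compact.

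Next, for each pair $(i,\epsilon)$, Theorem \ref{thm:upper} asserts that almost surely $\ubd f(E_i^{(\epsilon)}) \leq s_\epsilon$, where $s_\epsilon$ is the unique non-negative solution of $2^{d_E+\epsilon} = 2^{s_\epsilon}/\E(W^{s_\epsilon})$. Since this is a countable family of almost-sure events (indexed by $i\in\N$ and $\epsilon\in\Q_{>0}$), on a single event of full probability the bound holds simultaneously for all such $(i,\epsilon)$.

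On this event we apply the continuity of $f$ together with countable stability of packing dimension. Since $f(E) \subset \bigcup_i f(E_i^{(\epsilon)})$ and $\pkd A \leq \ubd A$ for any bounded set $A$,
\[
\pkd f(E) \;\leq\; \sup_i \pkd f(E_i^{(\epsilon)}) \;\leq\; \sup_i \ubd f(E_i^{(\epsilon)}) \;\leq\; s_\epsilon.
\]
Finally, the map $d \mapsto s(d)$ implicitly defined by $2^d = 2^{s}/\E(W^{s})$ is continuous on $[0,1]$ (as noted after Theorem \ref{thm:Benjamini}), so $s_\epsilon \to s$ as $\epsilon \downarrow 0$ through rationals, giving $\pkd f(E) \leq s$ almost surely.

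No serious obstacle is anticipated: the entire argument is essentially a bookkeeping exercise combining Theorem \ref{thm:upper} with the definition of packing dimension. The only point requiring care is ensuring that the exceptional null sets (one per pair $(i,\epsilon)$) can be discarded simultaneously, which is why the cover is indexed through a countable parameter set and the closures are taken to invoke the compactness hypothesis of Theorem \ref{thm:upper}.
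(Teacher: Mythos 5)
Your proof is correct and follows essentially the same route as the paper: both reduce the statement to Theorem \ref{thm:upper} via the identification of packing dimension with the modified upper box-counting dimension, applied to countable covers by compact sets. Your version merely spells out the bookkeeping (rational $\epsilon$, closures, simultaneous null sets, continuity of $d\mapsto s(d)$) that the paper's two-line proof leaves implicit.
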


\begin{proof}
  Recall that the packing dimension of a set $E$ equals its modified upper box-counting dimension, that is 
  $\pkd(E) = {\overline \dim}_{\rm MB}(E) = \inf\{\sup_i E_i:E \subset \cup_{i=1}^\infty \ubd E_i\}$, where
  the $E_i$ may be taken to be compact. The conclusion follows by applying Theorem \ref{thm:upper} to
  countable coverings of $E$.
\end{proof}

We also derive general lower bounds. 
\begin{theo}{\rm (General lower bound)}\label{thm:lower}
  Let $f$ be the distribution of a subcritical random cascade. 
  Let $E\subset[0,1]$ be non-empty and compact. Then almost surely
  \be\label{lowerbound}
  \ubd f(E) \geq \frac{\ubd E}{1- \E(\log_2 W)},
  \ee 
  and, provided that additionally $\E(W^p)<\infty$ for some $p>2$, then
  \be\label{lowerbound2}
  \lbd f(E) \geq \frac{\lbd E}{1- \E(\log_2 W)}.
  \ee 
  Further, the same inequalities hold for critical random cascades under the additional assumptions
  that $\E(W^{-t})<\infty$ for some $t>0$ and $E(W\log^2 W)<\infty$.
\end{theo}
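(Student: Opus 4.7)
\textbf{Plan for Theorem~\ref{thm:lower}.}
The overall strategy is, at each scale $r_k := 2^{-k(1-\E\log_2 W+\epsilon)}$, to produce at least $n_k/2$ pairwise disjoint intervals in $[0,L]$, each of length $\geq r_k$ and each meeting $f(E)$, where $n_k := |\mathcal{I}_k(E)|$. A simple packing argument then gives $N_{r_k}(f(E)) \geq n_k/(2C)$ for an absolute constant $C$, which on taking logs delivers the dimension bound.

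The probabilistic heart of the argument is the identity $|f(I_\bi)|=2^{-k}\prod_{j=1}^{k}W_{i_1\cdots i_j}\cdot L_\bi$, which via the weak law of large numbers for the i.i.d.\ sum $\sum_{j=1}^{k}\log_2 W_{i_1\cdots i_j}$ yields $\P(|f(I_\bi)|<r_k)\to 0$ as $k\to\infty$ (we may assume $\E|\log_2 W|<\infty$, else the right-hand side of \eqref{lowerbound} is zero and the inequality is trivial). Since this probability depends only on $k$, Markov's inequality applied to the count $|B_k|:=|\{\bi\in\mathcal{I}_k(E):|f(I_\bi)|<r_k\}|$ gives $\P(|B_k|\geq n_k/2)\to 0$; equivalently, with probability tending to one, at least $n_k/2$ of the $\bi\in\mathcal{I}_k(E)$ have $|f(I_\bi)|\geq r_k$.

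On this good event, we have $m\geq n_k/2$ pairwise disjoint intervals $J_i=f(I_{\bi_i})\subset[0,L]$ of length $\geq r_k$, each containing a point of $f(E)$. A short geometric argument shows that any set of diameter $\leq r_k$ can hit at most an absolute constant $C$ of these points: inside any interval of length $r_k$ there is only room for boundedly many points each belonging to a distinct disjoint interval of length $\geq r_k$. Hence $N_{r_k}(f(E))\geq n_k/(2C)$. Combining with $n_k \geq 2^{k(d_E-\epsilon')}$ along the subsequence witnessing $\ubd E=d_E$ (respectively for all large $k$ witnessing $\lbd E=d_E$) and passing to logarithms gives
\[
  \frac{\log N_{r_k}(f(E))}{-\log r_k} \,\geq\, \frac{k(d_E-\epsilon')\log 2 - \log(2C)}{k(1-\E\log_2 W+\epsilon)\log 2} \,\longrightarrow\, \frac{d_E-\epsilon'}{1-\E\log_2 W+\epsilon},
\]
after which letting $\epsilon,\epsilon'\to 0$ completes the proof.

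The main technical obstacle lies in upgrading the ``probability tending to one for each $k$'' statement to an almost sure one. For the $\ubd$ inequality \eqref{lowerbound} this is easy: Fatou's lemma for events gives $\{|B_k|<n_k/2\}$ almost surely infinitely often, which suffices for a limsup. For the $\lbd$ inequality \eqref{lowerbound2} the good event must hold for \emph{all} large $k$ almost surely, so via Borel--Cantelli one needs $\sum_k\P(|f(I_\bi)|<r_k)<\infty$. This quantitative rate is what the assumption $\E(W^p)<\infty$ for some $p>2$ supplies, via $\E L^p<\infty$ and polynomial-order decay of the relevant tail probability. In the critical regime the extra $\sqrt{k}$ factor in $|f(I_\bi)|$ is a subexponential correction to the rate, and the assumptions $\E(W^{-t})<\infty$ and $\E(W\log^2 W)<\infty$ ensure that the critical cascade limit is almost surely positive and finite with tails sufficient for the same scheme to go through.
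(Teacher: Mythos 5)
Your treatment of the upper box-dimension bound \eqref{lowerbound} is essentially the paper's argument: at scale $r_k=2^{-k(1-\E(\log_2 W)+\e)}$ you show each dyadic interval meeting $E$ has $|f(I_\bi)|\geq r_k$ with probability close to $1$ (law of large numbers for the weight product together with $\P(L\geq\tau)$ close to $1$), deduce via Markov applied to the count that a positive proportion of the $N_{2^{-k}}(E)$ intervals survive, and finish with a disjoint-interval packing count. Your reverse-Fatou upgrade to ``infinitely often, almost surely'' is a legitimate alternative to the paper's use of the tail-event property. That half is sound.

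The lower box-dimension bound \eqref{lowerbound2} has a genuine gap. Your scheme needs $\sum_k\P(|f(I_\bi)|<r_k)<\infty$, and you assert this follows from $\E(W^p)<\infty$ ``via $\E(L^p)<\infty$ and polynomial-order decay of the relevant tail probability''. But the relevant tail is the \emph{left} tail of $L$: the weight product is typically of order $2^{-k(1-\E(\log_2 W))}=r_k2^{k\e}$, so the event $\{|f(I_\bi)|<r_k\}$ essentially forces $L_\bi\lesssim 2^{-k\e}$, and summability over $k$ would require $\P(L<\delta)$ to decay polynomially in $\delta$ (at the very least $\E(\log^- L)<\infty$). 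Positive moments of $L$ control only the right tail and give nothing here, and the theorem's subcritical hypotheses contain no negative-moment assumption on $W$ or $L$ from which such left-tail decay could be extracted. The paper avoids the issue entirely by splitting the two sources of randomness: the lower deviations of the weight product are made summable over $k$ by a Baum--Katz rate estimate (this is where $\E(W^p)<\infty$ for some $p>2$ actually enters), while for the family $\{L_\bi\}_{\bi\in\mathcal{I}_k}$, which is i.i.d.\ and independent of the level-$k$ weights, one only needs a fixed proportion $p/2$ of the at least $2^{k(d-\e)}$ surviving intervals to satisfy $L_\bi\geq 1$; by Hoeffding's inequality (Lemma \ref{thm:hoeffding}) this fails with probability at most $\exp\big(-c\,2^{k(d-\e)}\big)$, which is summable. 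Without this independence-and-concentration step, or some substitute left-tail estimate for $L$, your Borel--Cantelli argument does not close.
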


It should be noted that these upper and lower bounds are asymptotically equivalent for small dimensions.
\begin{prop}\label{thm:asymptotics}
  Let $d\in(0,1)$ and let $s_1$ be the unique solution to
  \begin{align}
    2^d = \frac{2^{s_1}}{\E(W^{s_1})} \quad&\Longleftrightarrow\quad d = s_1-\log_2\E(W^{s_1}).\label{eq:upper}
    \intertext{Further, let }
    s_2 = \frac{d}{1-\E(\log_2 W)} \quad&\Longleftrightarrow\quad d = s_2- \E(\log_2 W^{s_2}) .\label{eq:lower}
  \end{align}
  Then $s_1/s_2 \to 1$ as $d\to 0$.
\end{prop}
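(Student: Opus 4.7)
The plan is to Taylor-expand $\phi(s) := \log_2 \E(W^s)$ to first order near $s = 0$ and then compare the two implicit definitions of $s_1$ and $s_2$. The key step is to establish
\begin{equation*}
\phi(s) \;=\; s\,\E(\log_2 W) + o(s) \qquad (s \to 0^+).
\end{equation*}
Granting this, substitution into \eqref{eq:upper} yields $d = s_1\bigl(1 - \E(\log_2 W)\bigr) + o(s_1)$, which rearranges to $s_1/d \to 1/(1 - \E(\log_2 W))$ as $d \to 0$. Since $s_2/d$ equals this limit exactly by \eqref{eq:lower}, the ratio $s_1/s_2$ converges to $1$. A preliminary observation needed for the substitution is that $s_1 \to 0$ as $d \to 0$: the map $s \mapsto s - \phi(s)$ is continuous and strictly increasing on $[0,1]$ (the content of the remark following Theorem \ref{thm:Benjamini}), so its inverse is continuous at the origin.

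To prove the expansion, I would apply the mean value theorem pointwise: $(W^s - 1)/s = W^{\xi(s)} \log W$ for some $\xi(s) \in (0,s)$, yielding the uniform domination $|(W^s - 1)/s| \leq (1 + W)\,|\log W|$ for $s \in (0,1)$. This dominating random variable is integrable under the standing assumptions \eqref{standassump} together with the implicit finiteness of $\E(\log_2 W)$ (without which $s_2$ itself would be ill-defined): on $\{W \geq 1\}$ we have $W|\log W| = (\ln 2)\,W \log_2 W$, integrable by \eqref{standassump}, and $|\log W| \leq W$ there; on $\{W < 1\}$, $W|\log W| \leq e^{-1}$, and $|\log W|$ is integrable by the assumption on $\E(\log_2 W)$. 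Dominated convergence then gives $(\E(W^s) - 1)/s \to \E(\log W)$ as $s \to 0^+$, and expanding $\log_2(1 + \,\cdot\,)$ around $1$ delivers the desired expansion for $\phi$.

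The principal obstacle is thus the identification of the one-sided derivative $\phi'(0^+) = \E(\log_2 W)$; everything downstream is routine algebra and, since $\E(\log_2 W) \leq 0$ by Jensen, the denominator $1 - \E(\log_2 W) \geq 1$ causes no division issues. A more robust alternative, should one wish to avoid the MVT domination step, is to exploit the convexity of $\phi$ on $[0,1]$: as $\phi(0) = 0$, the ratio $\phi(s)/s$ is non-decreasing in $s$, so its limit at $0^+$ exists in $[-\infty, \phi(1)]$, and one identifies it with $\E(\log_2 W)$ by applying monotone convergence separately to the positive and negative parts of $\log W$.
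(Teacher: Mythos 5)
Your proof is correct and follows essentially the same route as the paper: both arguments reduce to the single analytic fact that $\tfrac{1}{s}\log_2\E(W^s)\to\E(\log_2 W)$ as $s\to 0^+$ (equivalently, that the power means $\E(W^s)^{1/s}$ converge to the geometric mean), combined with routine algebra on the two defining equations and the observation that $s_1\to 0$. The only difference is that the paper cites this limit as well known, whereas you supply a proof of it via the mean value theorem and dominated convergence (with a careful check of integrability of the dominating function under \eqref{standassump}), which is a welcome but not structurally different addition.
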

Theorems  \ref{thm:upper} and  \ref{thm:lower}, as well as Proposition \ref{thm:asymptotics}
will be proved in Section \ref{genbounds}.

\subsubsection{Decreasing sequences with decreasing gaps}
\label{sec:decreasingsets}
To show that neither the expressions in \eqref{upbound} nor \eqref{lowerbound}-\eqref{lowerbound2}
give the actual box dimensions of $f(E)$ for many sets $E$, and that the box dimension of the random
image $f(E)$ depends more subtly on $E$ than just on its dimension, 
we will consider sets formed by decreasing sequences that accumulate at $0$, and  obtain the almost
sure box dimensions of their images in our main Theorems \ref{thm:dimea} and \ref{thm:dimgen}.
Let $\ba = (a_n)_{n\in\N}$ be a sequence of positive reals that converge to $0$. We write $E_{\ba} = \{a_n :
n\in\N\}\cup\{0\}$. 

Given two sequences $\ba$ and $\bb$ of positive reals that are eventually decreasing and
convergent to $0$ we say that $\bb$ \emph{eventually separates} $\ba$ if there is some $n_0\in\N$
such that for all $n\geq n_0$ there exists $m\in\N$ such that $a_{n+1}\leq b_m \leq a_{n}$. We will
need this property, which is preserved under strictly increasing functions, when comparing
dimensions of the images of sequences under the random function $f$. However, we first use it to
compare the box-counting dimensions of deterministic sets. The simple proofs of the following two
lemmas are given in Section \ref{sec:otherproofs}.

\begin{lem}\label{thm:eventuallySeparates}
  Let $\ba=(a_n)_{n}$ and $\bb=(b_n)_{n}$ be strictly decreasing sequences convergent to
  $0$ such that $\bb$ eventually separates $\ba$. Then
  \[
    \lbd E_{\ba}\leq \lbd E_{\bb}\quad\text{and}\quad \ubd E_{\ba} \leq \ubd E_{\bb}.
  \]
\end{lem}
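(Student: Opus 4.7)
The plan is to establish a covering-number inequality of the form $N_r(E_{\ba}) \leq C_1 N_r(E_{\bb}) + C_2$ uniformly in small $r > 0$, for constants $C_1, C_2$ depending only on $n_0$ (the threshold in the separation hypothesis). Both dimension inequalities follow immediately by dividing through by $-\log r$ and passing to $\limsup$ or $\liminf$ as $r \to 0$, since the multiplicative and additive constants vanish in the limit.

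The comparison rests on a natural index map $n \mapsto m(n)$ on the tail of $\ba$. For each $n \geq n_0$, I would let $m(n)$ be the least index with $b_{m(n)} \leq a_n$; the separation hypothesis then forces $b_{m(n)} \geq a_{n+1}$, so $a_{n+1} \leq b_{m(n)} \leq a_n < b_{m(n)-1}$. Since the $b$'s are strictly decreasing, $n \mapsto m(n)$ is strictly increasing, and it follows that for all sufficiently large $n$ each $a_n$ lies in the gap $(b_{m(n)}, b_{m(n)-1}]$ between two \emph{consecutive} points of $E_{\bb}$, with distinct $n$'s giving pairwise disjoint hosting gaps.

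To compare covers, I would work with the dyadic partition $\{[kr,(k+1)r) : k \in \mathbb{Z}\}$ and keep only those cells meeting $E_{\bb}$; their number is within a factor of $2$ of $N_r(E_{\bb})$. Each retained cell meets $E_{\bb}$ in a consecutive block of indices, so the retained cells induce a partition of $\mathbb{N}$ into contiguous blocks. An $a_n$ is automatically covered whenever both endpoints of its hosting gap lie in the same cell, by convexity. Otherwise the hosting gap must straddle a boundary between two consecutive blocks, of which there are at most one fewer than the number of retained cells, and each boundary is straddled by at most one hosting gap (since the hosting gaps are pairwise disjoint). Hence only that many additional length-$r$ intervals are needed to cover the bad $a_n$'s, together with at most $n_0$ singletons to handle $a_1, \ldots, a_{n_0}$.

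The main obstacle is the combinatorial bookkeeping of straddling gaps in the previous paragraph. Besides that, one must handle two minor edge cases that can be absorbed into the $n_0$-term correction: the possibility that $m(n_0+1) = 1$, so $b_{m(n_0+1)-1}$ is undefined, and the possibility that $\ba$ and $\bb$ share a common value, in which case $m$ may fail to be strictly increasing at an isolated step.
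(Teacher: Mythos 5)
Your argument is correct, and the two edge cases you flag are genuinely harmless, though your proposed fix for the second one is slightly off: if $\ba$ and $\bb$ share infinitely many values then $m$ can fail to be strictly increasing infinitely often, so this cannot be absorbed into a finite correction term; the right observation is that $m(n)=m(n+1)$ forces $a_{n+1}=b_{m(n)}\in E_{\bb}$, so such points are covered for free and injectivity of $m$ is only needed on the indices whose points actually require a hosting gap. That said, your route is genuinely different from, and rather longer than, the paper's. The paper works with the dual quantity: it redefines $N_r$ as the maximal cardinality of an $r$-separated subset, takes a maximal $r$-separated family $a_{n_1}>a_{n_2}>\cdots$ in $E_{\ba}$, uses the separation hypothesis to pick $b_{m_i}\in[a_{n_{i+1}},a_{n_i}]$ for each $i$, and observes that every other one of these, $b_{m_1},b_{m_3},\dots$, is automatically $r$-separated because consecutive chosen points are pushed apart by the intervening $a_{n_j}$'s; this yields $N_r(E_{\bb})\ge\tfrac12(N_r(E_{\ba})-2)$ in three lines, with no grid, no index map, and no straddling analysis. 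Your cover-based version buys an explicit cover of $E_{\ba}$ by cells adapted to $E_{\bb}$, which is more constructive, but it pays for this with the boundary-crossing bookkeeping you describe; the packing-number formulation sidesteps all of that because an $r$-separated set transfers from $\ba$ to $\bb$ pointwise rather than cell-by-cell.
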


We write $S_p\ (p>0)$ for the set of sequences $ \ba\ =\ (a_n)_{n}$ convergent to 0 such that
$\frac{-\log a_n}{\log n} \to p$. We say that  the sequence $ \ba\ =\ (a_n)_{n}$ is {\em decreasing
with decreasing gaps} if $a_n \searrow 0$ and $a_n - a_{n+1}$ is (not necessarily strictly)
decreasing.

\begin{lem}\label{thm:decreasingseqs}
  Let $\ba=(a_n)_{n}\in S_p$ and $\bb=(b_m)_{m} \in S_q$ be decreasing sequences with decreasing gaps with $0<q<p$. Then $\bb$ eventually separates $\ba$.
\end{lem}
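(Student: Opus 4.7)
The plan is to first establish an auxiliary fact about the gap sequence and then derive a contradiction by comparing exponents. The key auxiliary claim I would prove is that if $(x_n)\in S_r$ has decreasing gaps, then the gap sequence $(x_n-x_{n+1})$ itself lies in $S_{r+1}$, i.e.
\[
  \lim_{n\to\infty}\frac{-\log(x_n-x_{n+1})}{\log n}=r+1.
\]
For the upper bound on $x_n-x_{n+1}$, monotonicity of the gaps in the telescoping identity $\sum_{j=\lceil n/2\rceil}^{n-1}(x_j-x_{j+1})=x_{\lceil n/2\rceil}-x_n$ gives $\lfloor n/2\rfloor(x_n-x_{n+1})\leq x_{\lceil n/2\rceil}$, hence $x_n-x_{n+1}\leq x_{\lceil n/2\rceil}/\lfloor n/2\rfloor\leq Cn^{-r-1+\varepsilon}$. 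For the matching lower bound I would use the reverse monotonicity inequality $x_n-x_N\leq(N-n)(x_n-x_{n+1})$ with lookahead $N=\lceil n^{1+\eta}\rceil$: once $\varepsilon$ is chosen small enough relative to $\eta$ and $r$, the $S_r$ hypothesis forces $x_N=o(x_n)$, so $x_n-x_{n+1}\geq x_n/(2(N-n))\geq \tfrac12 n^{-r-1-\eta-\varepsilon}$, and letting $\eta,\varepsilon\to 0$ completes the claim.

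Armed with this, suppose for contradiction that $\bb$ does not eventually separate $\ba$. Then there is an infinite subsequence $(n_k)$ with $[a_{n_k+1},a_{n_k}]\cap\{b_m\}_m=\varnothing$. Since $\bb$ is strictly decreasing to $0$ (strictness follows because decreasing gaps plus $b_n\to0$ rule out a zero gap), for large $n_k$ the trivial alternative $b_1<a_{n_k+1}$ is impossible, so there exists $m_k$ with $b_{m_k}>a_{n_k}$ and $b_{m_k+1}<a_{n_k+1}$, and $m_k\to\infty$ because $b_{m_k+1}\to 0$.

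Next I would pin down the order of $m_k$. The inequality $b_{m_k}>a_{n_k}$ together with $b_{m_k}\leq m_k^{-q+\varepsilon}$ and $a_{n_k}\geq n_k^{-p-\varepsilon}$ yields an upper bound on $m_k$, and $b_{m_k+1}<a_{n_k+1}$ gives a matching lower bound. Sending $\varepsilon\to 0$ one obtains
\[
  \frac{\log m_k}{\log n_k}\longrightarrow\frac{p}{q}.
\]

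The final step is the gap comparison. By construction, $b_{m_k}-b_{m_k+1}>a_{n_k}-a_{n_k+1}$. The auxiliary claim applied to $\ba$ and to $\bb$ gives
\[
  \frac{-\log(a_{n_k}-a_{n_k+1})}{\log n_k}\to p+1,\qquad \frac{-\log(b_{m_k}-b_{m_k+1})}{\log m_k}\to q+1,
\]
and combining the second with $\log m_k/\log n_k\to p/q$ turns it into $-\log(b_{m_k}-b_{m_k+1})/\log n_k\to p+p/q$. The strict inequality between the two gaps then forces $p+p/q\leq p+1$, i.e.\ $p/q\leq 1$, contradicting $p>q>0$. I expect the main obstacle to be the lower bound in the auxiliary claim: the $S_r$ hypothesis only controls $x_n$ up to $n^{o(1)}$ factors, so the lookahead $N$ must be chosen as a non-trivial power of $n$ for $x_N$ to be negligible relative to $x_n$; once that step is handled, the rest is routine exponent arithmetic.
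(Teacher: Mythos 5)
Your proof is correct, and it reaches the conclusion by a genuinely different route from the paper's, even though both rest on the same quantitative heart: for a sequence in $S_r$ with decreasing gaps, the telescoping/monotonicity argument pins the $n$th gap down to $n^{-(r+1)+o(1)}$ from both sides (your lookback to index $\lceil n/2\rceil$ for the upper bound and lookahead to $\lceil n^{1+\eta}\rceil$ for the lower bound are sound, and the quantifier order you flag --- $\e$ small relative to $\eta$ and $r$ --- is handled correctly). Where you diverge is in how this estimate is deployed. The paper re-parametrises the gap estimate by the location $x\in[a_{n+1},a_n]$ rather than the index, obtaining $x^{1+1/p+\e}\leq a_n-a_{n+1}\leq x^{1+1/p-\e}$ and likewise for $\bb$; then for a common $x$ the $\bb$-gap containing $x$ is strictly shorter than the $\ba$-gap containing $x$, so since the two intervals intersect, an endpoint of the shorter one must lie in the longer one. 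This is direct, needs no contradiction, and avoids having to locate $m$ relative to $n$ at all. Your argument instead assumes failure of separation, extracts the straddling indices $m_k$, proves $\log m_k/\log n_k\to p/q$ from the two-sided bounds on $a_n$ and $b_m$, and derives the contradiction $p/q\leq 1$ from the gap asymptotics; this is more roundabout (the index-comparison step is extra work the paper never needs) but it is a legitimate and complete alternative, and your auxiliary claim that the gap sequence of an $S_r$ sequence with decreasing gaps lies in $S_{r+1}$ is a clean standalone statement of the shared core estimate.
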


Of course, the most basic example of such sequences are the powers of reciprocals.
For $p>0$ let $\ba(p) = (n^{-1/p})_n \in S_p$ and let 
\[
  E_{\ba(p)} ={\textstyle \big\{0,1,\frac{1}{2^p}, \frac{1}{3^p},\ldots\big\}}\cup\{0\}.
\]
We may compare $\ba(p)$ with other sequences in $S_p$.

\begin{cor}\label{sequencedims}
  Let $\ba=(a_n)_{n} \in S_p$  be a strictly decreasing sequence with decreasing gaps such that
  $ (a_n)_{n}\in S_p$, where $p>0$.  Then 
  $$\bdd E_\ba = \frac{1}{p+1}.$$
\end{cor}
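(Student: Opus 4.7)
The plan is to sandwich $E_\ba$ between the two reference sequences $\ba(p-\epsilon)$ and $\ba(p+\epsilon)$ via Lemmas \ref{thm:eventuallySeparates} and \ref{thm:decreasingseqs}, then let $\epsilon \to 0^+$. The only preliminary step required is the baseline identity $\bdd E_{\ba(q)} = 1/(q+1)$ for every $q > 0$.

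For the baseline, I would carry out a standard direct covering estimate. With $a_n \asymp n^{-q}$, the consecutive gap $a_n - a_{n+1}$ scales like $n^{-q-1}$, so the balancing index at resolution $r$ is $N \asymp r^{-1/(q+1)}$, obtained by solving $N^{-q-1} \asymp r$. The first $N$ points of $\ba(q)$ each require their own $r$-interval, while the tail $\{a_n : n > N\} \cup \{0\}$ sits inside $[0, a_N]$, an interval of length $\asymp r^{q/(q+1)}$, and therefore needs $\asymp r^{-1/(q+1)}$ further $r$-intervals. Hence $N_r(E_{\ba(q)}) \asymp r^{-1/(q+1)}$ and $\bdd E_{\ba(q)} = 1/(q+1)$. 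Convexity of $x \mapsto x^{-q}$ on $(0,\infty)$ also guarantees that $\ba(q)$ is strictly decreasing with decreasing gaps, which is the hypothesis needed to invoke Lemma \ref{thm:decreasingseqs}.

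To assemble the sandwich, fix $\epsilon \in (0,p)$. Since both $\ba(p-\epsilon)$ and $\ba(p+\epsilon)$ are decreasing sequences with decreasing gaps and lie in $S_{p-\epsilon}$ and $S_{p+\epsilon}$ respectively, Lemma \ref{thm:decreasingseqs} yields that $\ba(p-\epsilon)$ eventually separates $\ba$ and that $\ba$ eventually separates $\ba(p+\epsilon)$. Applying Lemma \ref{thm:eventuallySeparates} in both directions then gives
\[
\tfrac{1}{p+\epsilon+1} \;=\; \bdd E_{\ba(p+\epsilon)} \;\leq\; \lbd E_\ba \;\leq\; \ubd E_\ba \;\leq\; \bdd E_{\ba(p-\epsilon)} \;=\; \tfrac{1}{p-\epsilon+1},
\]
and sending $\epsilon \to 0^+$ yields $\bdd E_\ba = 1/(p+1)$. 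The only part that involves any actual work is the baseline calculation of $\bdd E_{\ba(q)}$, and even that is a classical one-page box-counting exercise; the remainder is pure bookkeeping between the two separation lemmas, so no serious obstacle is expected.
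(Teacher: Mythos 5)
Your proof is correct and follows essentially the same route as the paper: establish $\bdd E_{\ba(q)}=1/(1+q)$ for the reference sequences, then sandwich $E_\ba$ between $E_{\ba(p-\epsilon)}$ and $E_{\ba(p+\epsilon)}$ using Lemmas \ref{thm:decreasingseqs} and \ref{thm:eventuallySeparates} and let $\epsilon\to 0$. The only difference is that you sketch the classical covering computation for the baseline, whereas the paper simply cites \cite[Example 2.7]{Fa}; your application of the two lemmas, including the direction of the inequalities, matches the paper exactly.
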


\begin{proof}[Proof of Corollary \ref{sequencedims}]
  Clearly $\ba(q) \in S_q$ for $q>0$ and
  it is well-known that $\bdd E_{\ba(q)} =1/(1+q)$, see \cite[Example 2.7]{Fa}. If $q_1<p<q_2$ then 
  $\ba(q_1)$ eventually separates $\ba$ and $\ba$ eventually separates $\ba(q_2)$, by Lemma \ref{thm:decreasingseqs}, so by Lemma \ref{thm:eventuallySeparates}, 
  $$\frac{1}{1+q_2} =\lbd (E_{\ba(q_2)}) \leq   \lbd (E_{\ba}) \leq \lbd (E_{\ba(q_1)}) =\frac{1}{1+q_1},$$
  with similar inequalities for upper box dimension. Since we may take $q_1$ and $q_2$ arbitrarily close to $p$,  the conclusion follows.
\end{proof}

\subsubsection{Random images of decreasing sequences with decreasing gaps}

We aim to find the almost sure dimension of $f(E_\ba)$ for sequences $E_\ba \in S_p$ $(p>0)$. To
achieve this we work with special sequences $E^{\alpha} \in S_{1/\alpha}$ for which $\bdd
f(E^{\alpha})$ is more tractable, and then extend these conclusions across the $S_p$ using the
eventual separation property.

Let $\alpha >0$ be a real parameter and let $E^{\alpha}\subset [0,1]$ be the set given in terms of binary expansions by
\[
  E^{\alpha} = \big\{0.0^{k-1}1\bj 000\cdots, \text{ for all } k\in \mathbb{N},
  \,\bj\in\{0,1\}^{\lfloor \alpha k\rfloor}\big\} \cup \{0\},
\]
where $0^{m}$ denotes $m$ consecutive 0s and $\{0,1\}^m$ represents all digit sets of length $m$ of 0s and 1s.
Equivalently, letting $ \Sigma^\alpha$ be the set of infinite strings 
\[
  \Sigma^\alpha = \big\{ 0^{k-1}1\bj00\dots\in\{0,1\}^{\mathbb{N}}\text{, for all }k\in
    \mathbb{N},\,\bj\in\{0,1\}^{\lfloor \alpha
  k\rfloor}\big\} \cup \{000\dots\},
\]
then  $E^{\alpha}$ is the  image of  $\Sigma^\alpha$ under the natural bijection $\pi(\bi) =
\sum_{n=1}^\infty i_n/2^{n}$ where $\bi = i_1i_2\ldots$, and we will identify such strings with
binary numbers in the obvious way throughout.
Clearly, $E_{\alpha}$ consists of a decreasing sequence of numbers with decreasing gaps, together with 0.  

If the $n$th term in this sequence is $\alpha_n = 0.0^{k-1}1\bj 00\cdots \in E^{\alpha}$ with
$\bj\in\{0,1\}^{\lfloor \alpha k\rfloor}$, then $2^{-(k+1)} <  \alpha_n \leq 2^{-k}$. Moreover,
$$2^{\lfloor (k-1)\alpha \rfloor}\leq 2^{\lfloor \alpha \rfloor}+ \cdots + 2^{\lfloor (k-1)\alpha \rfloor} 
\leq n \leq 2^{\lfloor \alpha \rfloor}+ \cdots + 2^{\lfloor k\alpha \rfloor} \leq 2^{(k+1)\alpha}.$$
Hence
\be\label{aln}
\frac{k}{(k+1)\alpha}   \leq  \frac{-\log_2 \alpha_n}{\log_2 n} < \frac{k+1}{\lfloor (k-1)\alpha \rfloor}.
\ee
Letting $n\to \infty$ and thus $k\to \infty$, it follows  that $(\alpha_n)_{n} \in S_{1/\alpha}$, so
by Corollary \ref{sequencedims}  $\bdd E^{\alpha} = \alpha /(1+\alpha)$.

\begin{figure}[t]
  \begin{center}
    \begin{tikzpicture}
      \node (image) at (0,0) {\includegraphics[width=.9\textwidth]{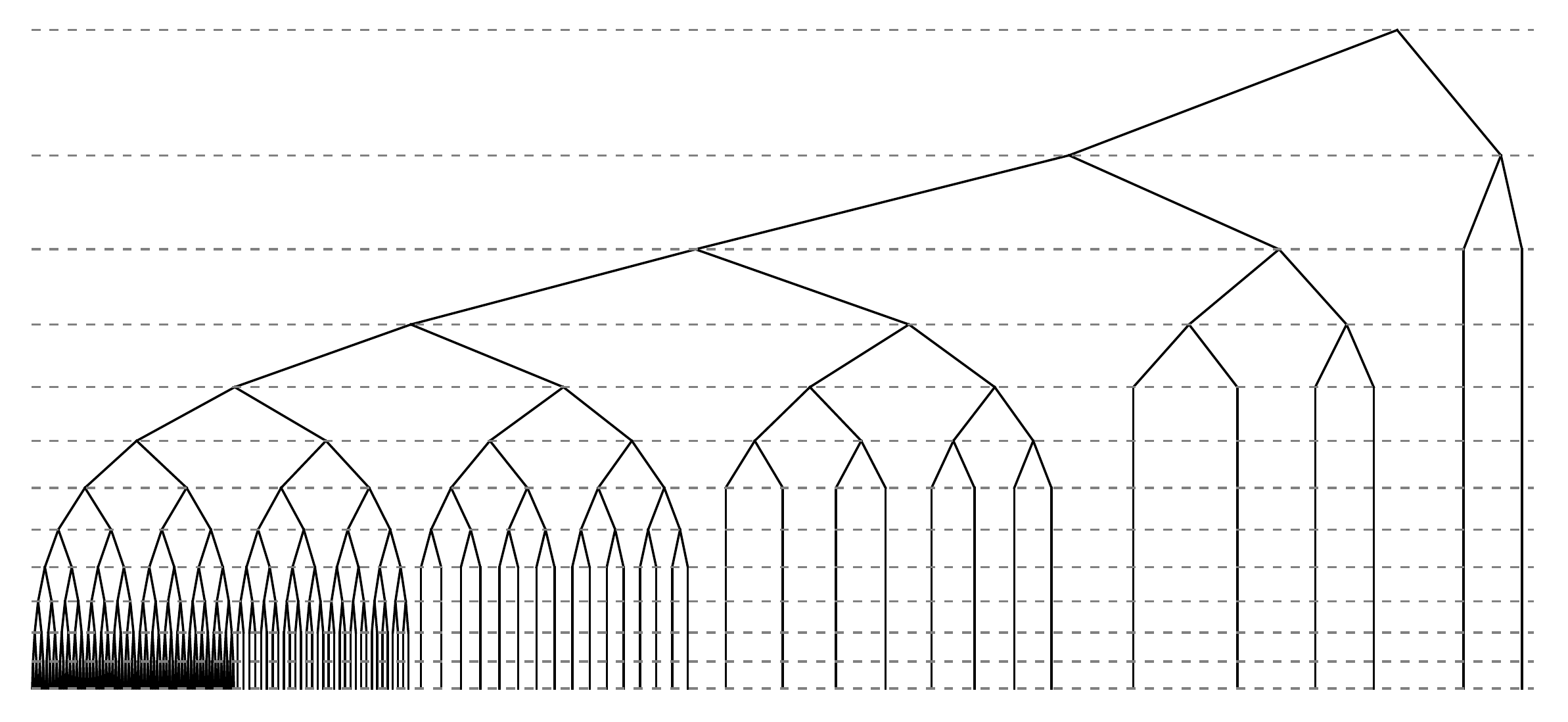}};
      \node () at (7.5,3.3) {$k=1$};
      \node () at (7.5,2.1) {$k=2$};
      \node () at (7.5,1.2) {$k=3$};
      \node () at (7.5,0.3) {$\vdots$};
      \node () at (0,-3.3) {$\vdots$};
      \node () at (-5,-3.3) {$\vdots$};
      \node () at (5,-3.3) {$\vdots$};
    \end{tikzpicture}
  \end{center}
  \caption{The coding tree of $E^\alpha$ for $\alpha = 1$. At every left-most level $k$ node a full
  binary tree of height $k$ branches off.}
  \label{fig:thyrse}
\end{figure}

We may think of the structure of a set $E\subset [0,1]$ as a tree formed by the hierarchy of binary
intervals that overlap $E$. The structure of $E^{\alpha}$, with a `stem' at 0 and a sequence of
full trees branching off this stem, see Figure~\ref{fig:thyrse},
makes it convenient for analysing the box dimension of the random
image $f(E^{\alpha})$.
To obtain the lower bound, we will require a result on large deviations in binary trees that requires
the additional assumptions that 
\be\label{extraassump}
\E(W^t)<\infty \text{ for all }t > 0\quad\text{and}\quad \E(W^{-u})<\infty\text{ for
some } u>0.
\ee
The first condition implies 
that  $\E(W^t \log^n W )<\infty$ for all $t>0$, and in particular that $\E(W^t)$ is smooth for all $t>0$.
Applying the dominated convergence theorem, we can compute the
derivatives of the $t$-moments of $W$:
\begin{equation}\nonumber
  \tfrac{\partial}{\partial t}\E(W^t) = \E\left( \tfrac{\partial}{\partial t}W^t \right)
  = \E(W^t \log W)
  \quad \text{and} \quad
  \tfrac{\partial^2}{\partial t^2}\E(W^t) = \E(W^t\log^2 W) > 0.
\end{equation}
We also note that 
\be\label{derivratio}
\frac{\partial}{\partial t}\left(\frac{\E(W^t\log W)}{\E(W^t)}\right)
=\frac{\E(W^t \log^2 W)\E(W^t) - \E(W^t\log W)^2}{\E(W^t)^2} > 0,
\ee
so in particular $\E(W^t\log W)/\E(W^t)$ is strictly increasing in $t\geq0$, since, by the Cauchy-Schwarz inequality,
$  \E(W^t\log(W))^2 = \E(W^{t/2} W^{t/2}\log W)^2 
< \E(W^t)\E(W^t \log^2 W).$

We can now state our main results.

\begin{theo}\label{thm:dimea}
  Let $W$ be a positive random variable that is not almost surely constant and satisfies
  \eqref{standassump} and \eqref{extraassump}.
  Let $f$ be the random homeomorphism given by the (subcritical) multiplicative cascade with
  random variable $W$.
  Then, almost surely, the random image $f(E^\alpha)$ has box-counting dimension
  \be\label{dimeal}
  \bdd f(E^\alpha) =\sup_{x>0}\frac{1+\inf_{t>0}\big(x t+\log_2\E(W^t)\big)}{1+x+(1-\E(\log_2 W))/\alpha}
  \ee
  for all $\alpha>0$. We note that we only require \eqref{extraassump} for the lower bound in \eqref{dimeal}.
\end{theo}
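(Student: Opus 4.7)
First I would analyze $f(E^\alpha)$ branch by branch, exploiting the asymmetry between the ``stem'' and the subtrees of $E^\alpha$. Decompose $f(E^\alpha) = \{0\} \cup \bigcup_{k\geq 1} f(E^\alpha \cap I_{0^{k-1}1})$ and set $m_k := \lfloor \alpha k \rfloor$, $n := k + m_k$. Within the $k$-th branch the $2^{m_k}$ points of $E^\alpha \cap I_{0^{k-1}1}$ map under $f$ to a finite sequence whose consecutive gaps in $\R$ are exactly the cascade masses $\mu(I_{0^{k-1}1\bj}) = 2^{-n} W^{(k)} V_\bj$ for $\bj \in \{0,1\}^{m_k}\setminus\{1^{m_k}\}$, where $W^{(k)} = W_0 W_{00} \cdots W_{0^{k-1}} W_{0^{k-1}1}$ is the product of $k$ independent copies of $W$ along the stem and $V_\bj = W^{(k,\bj)} L^{(k,\bj)}$ is the subtree product times an independent copy of the total cascade mass. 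Consequently
\[
N_r(f(E^\alpha)) = O(1) + \sum_{k\geq 1}\big(1 + \#\{\bj \in \{0,1\}^{m_k}: \mu(I_{0^{k-1}1\bj}) > r\}\big),
\]
reducing the dimension problem to counting heavy leaves per branch. Write $R = -\log_2 r$, $F(t) = \log_2 \E(W^t)$, and let $I(y) = \sup_{t\geq 0}(ty - F(t))$.

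The key point is that each branch supplies only one stem realisation yet $2^{m_k}$ independent subtree sub-paths. For the upper bound (which should need only~\eqref{standassump}), I would first argue that, by a concentration estimate on $\log_2 W^{(k)}/k$ and Borel--Cantelli, only finitely many branches have a stem that fails to be $\delta$-close to its LLN value $\E(\log_2 W)$, contributing $O(1)$ to $N_r$. For the remaining (``typical-stem'') branches, apply Markov to $\P(V_\bj > r2^n/W^{(k)})$ with an exponent $t = t(k)$ chosen by Fenchel duality. Since for $x > 0$ the inner infimum in $G(x) = \inf_{t>0}(xt + F(t))$ is attained at $t(x)$ with $F'(t) = -x < 0$, and $F'$ crosses zero inside $(0,1)$ in the subcritical regime, the relevant $t(k)$ lies in $[0,1)$, so $\E(W^t) \leq 1$ by Jensen and no extra integrability is needed. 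The resulting conditional bound is $\E[N_r^{(k)} \mid \text{typical stem}] \lesssim 2^{m_k(1 - I(y_2(k,R)))}$ with $y_2(k,R) = 1 + \beta/\alpha - R/(\alpha k)$ and $\beta = 1 - \E(\log_2 W)$. Markov together with Borel--Cantelli along dyadic scales $r_m = 2^{-m}$, combined with monotonicity of $N_r$, promotes this to an almost sure upper bound for all $r$. Summing over $k$, setting $k/R = 1/(\alpha(1+x+\beta/\alpha))$ with $x = -y_2$, and using the duality $G(x) = -I(-x)$, identifies the bound with $\sup_{x>0}(1+G(x))/(1+x+\beta/\alpha)$.

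For the lower bound, fix $\eps > 0$, let $x^*$ achieve the supremum to within $\eps$, and take the corresponding optimal branch $k^*(R)$. Conditional on a typical stem at $k^*$, a Paley--Zygmund/second-moment argument applied to the count of heavy subtree sub-paths shows that with probability bounded away from $0$ one has $N_r^{(k^*)} \geq \tfrac12 \E[N_r^{(k^*)} \mid \text{typical stem}] \geq 2^{R(s - \eps)}$. Here the extra assumption~\eqref{extraassump} is essential: $\E(W^p) < \infty$ for $p > 2$ bounds the variance, controlling correlations between subtree leaves that share ancestors, while $\E(W^{-u}) < \infty$ controls the lower tail of $L^{(k,\bj)}$. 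A Borel--Cantelli argument across dyadic scales, exploiting the approximate independence of branches with widely separated indices $k$, promotes ``positive probability per scale'' to ``almost sure infinitely often''; repeating on many near-optimal $k$'s per scale supplies the concentration needed for $\lbd \geq s$.

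The main obstacle is the lower bound: assembling sharp cascade large-deviation estimates, handling the second moment under leaf correlations, and promoting these to almost sure statements uniform across scales. The algebraic identification of the optimiser, via the Fenchel--Legendre duality between $F$, $G$, and $I$, is delicate but essentially routine once the probabilistic estimates are in hand.
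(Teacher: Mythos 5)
Your overall architecture — branch-by-branch decomposition along the stem, reduction to counting heavy leaves $\bj$ with $\mu(I_{0^{k-1}1\bj})>r$, LLN control of the stem product, Markov plus Legendre duality for the upper bound, and Borel--Cantelli along dyadic scales — matches the paper's proof, and your upper bound (including the observation that the optimal $t$ lies in $[0,1)$ so that \eqref{extraassump} is not needed there) is essentially Lemmas \ref{thm:newprelim}--\ref{thm:newprelim2} and Proposition \ref{propnew}. The divergence, and the genuine gap, is in the lower bound. You propose a plain Paley--Zygmund/second-moment argument on the count $Z=\#\{\bj\in\{0,1\}^{m_k}: W_{0^{k-1}1 j_1}\cdots W_{0^{k-1}1\bj}\geq 2^{-(x+\delta)m_k}\}$. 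For leaf counts in a branching random walk this fails in part of the relevant parameter range: pairs of leaves sharing a prefix of length $\ell=\theta m_k$ contribute $2^{2m_k-\ell}$ pairs whose joint probability is maximised by letting the common prefix \emph{overshoot} the ray (prefix slope $y>-x$, suffix slope pulled back toward the mean), and a short computation — e.g.\ for log-normal $W$, where the rate function $I$ is quadratic — shows that $\E(Z^2)/(\E Z)^2$ grows like $2^{m_k\theta(2I/(1+\theta)-1)}$, which is exponentially large whenever $I(-x)>1/2$. Since the optimising $x$ in \eqref{dimeal} can certainly put you in that regime, Paley--Zygmund then yields only an exponentially small success probability and your argument collapses. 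Your attribution of the needed control to $\E(W^p)<\infty$, $p>2$, misses the point: the obstruction is the shared-ancestor correlation structure, not heavy tails, and no moment hypothesis on $W$ removes it. Fixing this requires either a truncated/barrier second moment (restricting to paths whose partial sums never exceed the target ray) or, as the paper does, importing the almost-sure branching-random-walk large-deviation count wholesale from Attia--Barral (Proposition \ref{thm:attia} and Lemma \ref{thm:GW}).

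Two further remarks. First, your mechanism for upgrading ``positive probability at each scale'' to an almost-sure statement — independence of the subtrees hanging off \emph{distinct near-optimal branches} $k$ close to $k^*(R)$, of which there are linearly many in $R$ — is different from the paper's (which instead uses the $2^{\lfloor(1-\lambda)k\rfloor}$ independent subtrees rooted a short way \emph{inside a single branch}, Lemma \ref{thm:lb2}); your variant is viable in principle and would cost only an $\eps$ in the exponent, but note that for $\lbd$ you need the count bound for \emph{all} small $r$, so you must make the failure probabilities summable, not merely show success infinitely often. Second, the role of $\E(W^{-u})<\infty$ in the paper is to lower-bound the connecting product from the branch root down to the successful subtree node (not the lower tail of $L$, which is handled simply by $\P(L\geq 1)>0$ and Hoeffding); your sketch should account for that connecting segment explicitly.
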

The dimension formula is expressed in terms of the Legendre transform of the logarithmic moment
$\log_2 \E(W^t)$. Figure \ref{fig:moment} shows the logarithmic moment and its Legendre transform
for a log-normally distributed $W$ that satisfies our assumptions.
\begin{figure}[tbp]
  \begin{tikzpicture}[x=\textwidth/10, y=\textwidth/10]
    \node  (moments) at (0,0) {\includegraphics[width = 0.49 \textwidth]{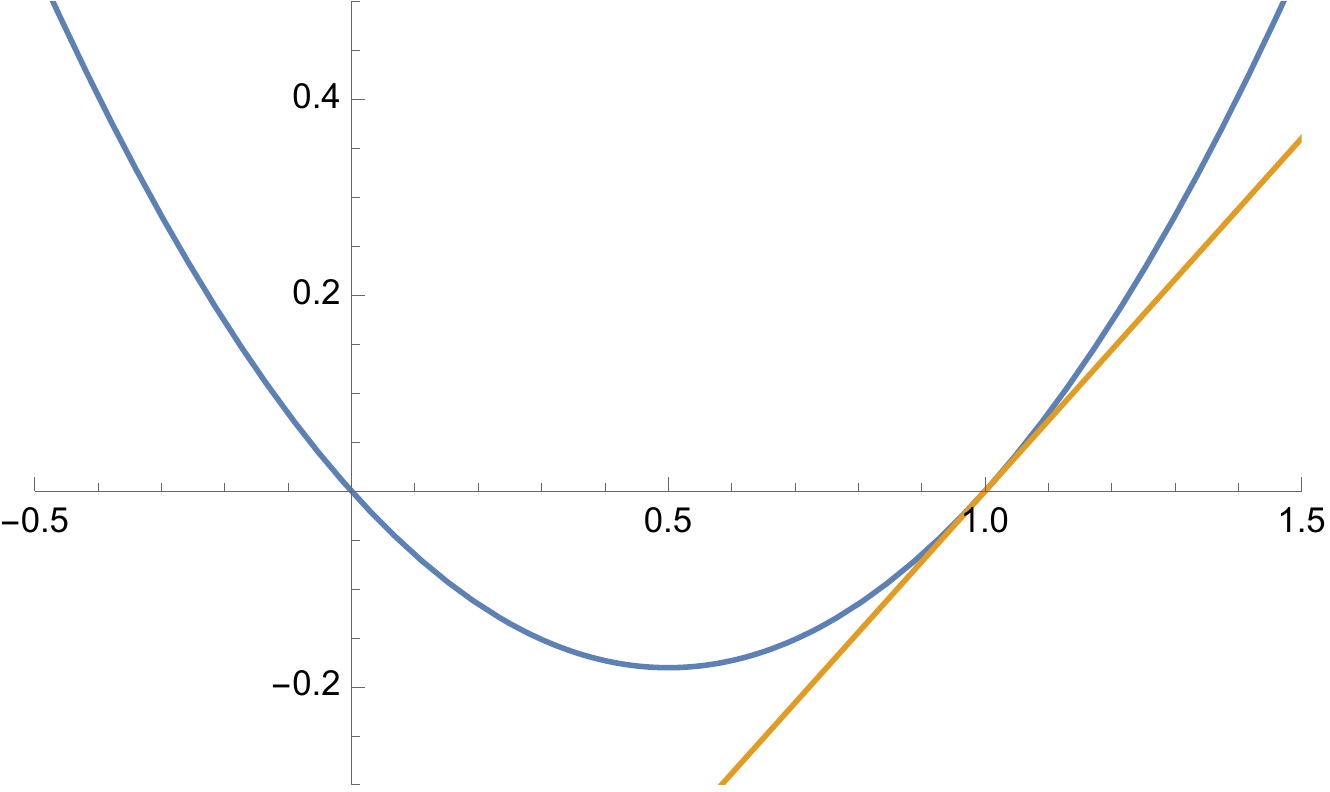}};
    \node  (legendre) at (5,0) {\includegraphics[width = 0.49 \textwidth]{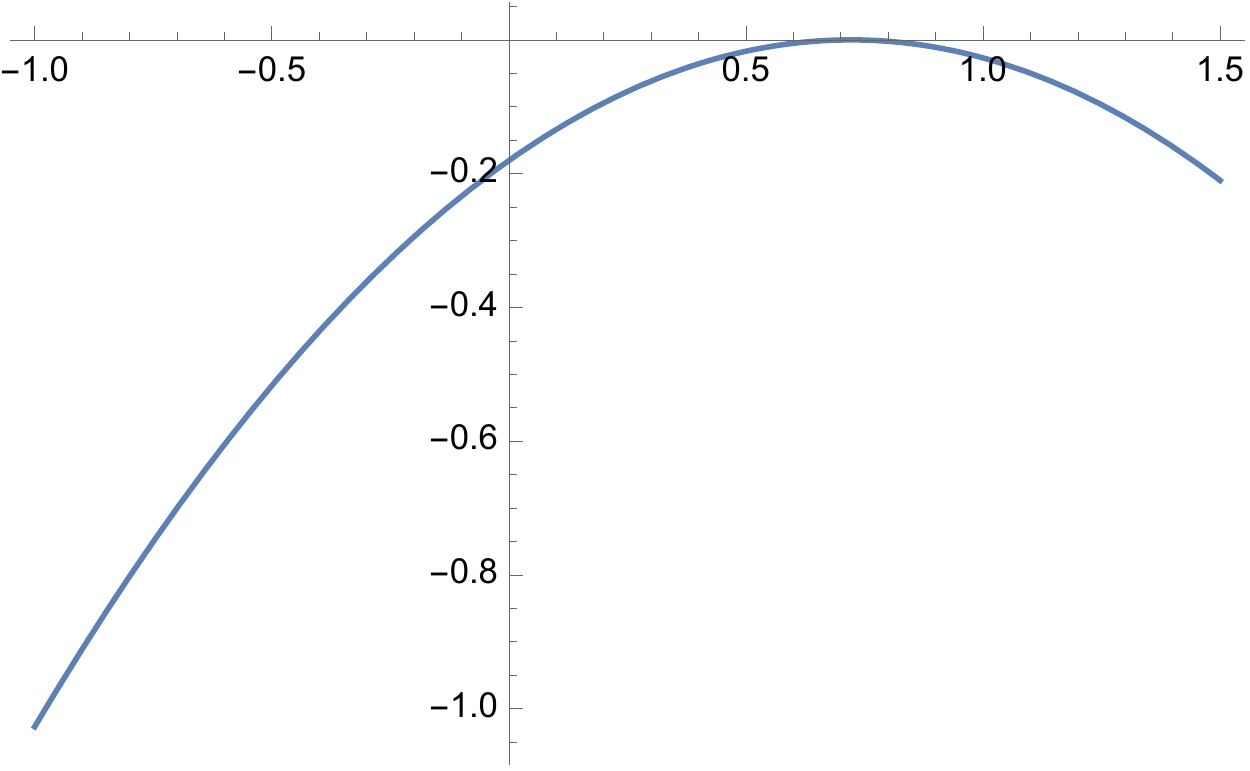}};
    \node () at (1.5,1.3) {$\log_2 \E(W^t)$};
    \node () at (1.4,-1.35) {$(1-t)\E(W\log_2 W)$};
    \fill (5.90,1.35) circle (0.05);
    \node () at (5.9,-.30) {$(-\E(\log_2 W),0)$};
    \draw [-stealth] (5.9,.0) -- (5.9,1.30);

  \end{tikzpicture}
  \caption{A plot of the moments $\log_2\E(W^t)$ (left) along with its Legendre transform (right)
  for $W$ having log-normal distribution with variation $\sigma^2 = 1$.}
  \label{fig:moment}
\end{figure}

The right hand side of \eqref{dimeal} is strictly increasing and
continuous in $\alpha$, as we verify in Lemma
\ref{thm:monotone}. Using this, and noting that the `eventually
separated' condition is preserved under monotonic increasing functions, we may compare $f(E^{1/p})$
with $f(E_{\ba})$, where $\ba \in S_p$,  to transfer this conclusion to more general
sequences. 

\begin{theo}\label{thm:dimgen}
  Let $W$ be a positive random variable that is not almost surely constant and satisfies
  \eqref{standassump} and \eqref{extraassump}.
  Let $f$ be the random homeomorphism given by the (subcritical) multiplicative cascade with
  random variable $W$. 
  Then, almost surely, the random images $f(E_{\ba})$ have box-counting dimension
  \be\label{dimgen}
  \bdd f(E_{\ba}) =\sup_{x>0}\frac{1+\inf_{t>0}\big(x t+\log_2\E(W^t)\big)}{1+x+(1-\E(\log_2 W))p}, 
  \ee
  for all decreasing sequences with decreasing gaps $\ba=(a_n) \in S_p$ and $p>0$ simultaneously.
\end{theo}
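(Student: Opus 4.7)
The plan is to deduce Theorem \ref{thm:dimgen} from Theorem \ref{thm:dimea} by squeezing an arbitrary sequence $\ba \in S_p$ between the specially-constructed sets $E^{1/q}$ for rational $q$ on either side of $p$, and then passing to the limit using the continuity statement of Lemma \ref{thm:monotone}. The key observation that makes this work is that the relation ``$\bb$ eventually separates $\ba$'' from Section \ref{sec:decreasingsets} is preserved under any strictly increasing map, so it survives the application of the random homeomorphism $f$: indeed, the inequalities $a_{n+1}\leq b_m\leq a_n$ are preserved by $f$, and since $f$ is continuous with $f(0)=0$ we have $f(E_\ba)=E_{f(\ba)}$.

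First I would fix, once and for all, a single probability-1 event $\Omega_0$ on which the conclusion of Theorem \ref{thm:dimea} holds for \emph{every rational} $\alpha>0$ simultaneously; this is available because a countable intersection of almost-sure events is almost sure. Recall from the discussion surrounding \eqref{aln} that each $E^{\alpha}$ is itself a decreasing sequence with decreasing gaps, and that $E^{\alpha}\in S_{1/\alpha}$. Write
\[
D(\alpha) \;:=\; \sup_{x>0}\frac{1+\inf_{t>0}\bigl(xt+\log_2\E(W^t)\bigr)}{1+x+(1-\E(\log_2 W))/\alpha}
\]
for the right-hand side of \eqref{dimeal}, so that on $\Omega_0$, $\bdd f(E^\alpha)=D(\alpha)$ for all rational $\alpha>0$.

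Now fix $p>0$ and an arbitrary decreasing sequence $\ba=(a_n)\in S_p$ with decreasing gaps, and choose rationals $q_1<p<q_2$. Setting $\alpha_1=1/q_1$ and $\alpha_2=1/q_2$, Lemma \ref{thm:decreasingseqs} gives that $E^{\alpha_1}\in S_{q_1}$ eventually separates $\ba$ (because $q_1<p$), and that $\ba$ eventually separates $E^{\alpha_2}\in S_{q_2}$ (because $q_2>p$). Applying $f$ preserves these separation relations, so two uses of Lemma \ref{thm:eventuallySeparates} yield
\[
\bdd f(E^{\alpha_2}) \;\leq\; \lbd f(E_\ba) \;\leq\; \ubd f(E_\ba) \;\leq\; \bdd f(E^{\alpha_1}).
\]
On $\Omega_0$ this becomes $D(1/q_2)\leq\lbd f(E_\ba)\leq\ubd f(E_\ba)\leq D(1/q_1)$. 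Letting $q_1\uparrow p$ and $q_2\downarrow p$ through rationals, the continuity of $D$ in $\alpha$ provided by Lemma \ref{thm:monotone} collapses both bounds to $D(1/p)$, which is exactly the expression in \eqref{dimgen}. In particular, the box-counting dimension of $f(E_\ba)$ exists and equals the stated value, and the conclusion holds on the single full-measure event $\Omega_0$, uniformly in $\ba$ and $p$.

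The only genuine obstacle is the uniformity in $\ba$ and $p$ claimed in the theorem, since Theorem \ref{thm:dimea} is only stated for one $\alpha$ at a time. This is handled at the very start by restricting to rational parameters and then using the continuity of $D$ to reach irrational $p$; everything else is monotonic sandwiching. A minor check to include in the write-up is that the right-hand side of \eqref{dimgen} really is $D(1/p)$, which follows by inspection since the substitution $\alpha=1/p$ turns $(1-\E(\log_2 W))/\alpha$ into $(1-\E(\log_2 W))p$.
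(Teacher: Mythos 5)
Your proposal is correct and follows essentially the same route as the paper: sandwich $\ba$ between the sets $E^{1/q_1}$ and $E^{1/q_2}$ via Lemmas \ref{thm:decreasingseqs} and \ref{thm:eventuallySeparates}, use that the strictly increasing $f$ preserves eventual separation, and close the gap with the continuity from Lemma \ref{thm:monotone}. Your explicit restriction to rational parameters to secure a single full-measure event is a slightly cleaner handling of the ``simultaneously'' claim than the paper's, but it is the same argument in substance.
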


The formula in \eqref{dimgen} clearly does not coincide with \eqref{eq:BenjaminiFormula} which gives
the Hausdorff dimension in  \cite{Benjamini09} or the
average box-counting dimension in \cite{Duplantier11}. 
In particular, unlike Hausdorff dimension, the almost sure box-counting dimension of $f(E)$
cannot be found simply in terms of the box-counting dimension of $E$ and the
random variable $W$ underlying the $f$.
One can easily construct a Cantor-like set $E$ of box and Hausdorff dimensions $1/(1+p)$ with the
almost sure box dimension of $f(E)$ as the solution in \eqref{eq:BenjaminiFormula},
see Corollary \ref{thm:homogeneous}. But the
set $E_{\ba(p)}$ with $\ba=(n^{-p})_{n}$ also has box dimension
$1/(1+p)$ with the box dimension of $f(E_{\ba(p)})$ given by  \eqref{dimgen}, so  $E$ and
$E_{\ba(p)}$ have the same box dimension but with their random images having different box
dimensions. Thus the structure of the set and not just its box-counting dimension determine the
image dimension.

\medskip
We obtain different dimension results for sets accumulating at 0 because we seek a balance between
the behaviour of products of the $W_\bi$ along the `stem' $\{0^k\}_{k\in\N}$, which grows
like $\exp\E(\log W)$ (a `geometric' mean), and that of the trees that
branch off this stem and grow like $\E(W)$ (an `arithmetic' mean). 
These different large deviation behaviours are exploited in the proofs. The stark difference in
these two behaviours was analysed in detail in \cite{Troscheit17} in a different context. 

On the other hand, homogeneous, or regular sets, have a structure resembling that of a tree that
grows geometrically and there is no `stem' that distorts this uniform behaviour.

Finally we remark that Theorems \ref{thm:dimea} and \ref{thm:dimgen} can be extended to critical
cascades in a similar fashion to our general bounds. We ommit details to avoid unneccesary
technicalities.

\subsection{Specific $W$ distributions}
The expressions for the  box-counting dimension in \eqref{dimgen} and the lower and upper bounds
above can be simplified or numerically estimated for particular distributions of $W$. Most often
considered is a log-normal distribution, and we also examine a two-point discrete distribution, as
was done for the Hausdorff dimension of images in \cite{Benjamini09}.

\subsubsection{Log-normal $W$}
Let $E_\ba$ be the set formed by the sequence $\ba = \ba(p)  \in S_p$, and let $W$ be log-normally distributed with
parameters $\mu, \sigma$, that is $W=\exp X$ where $X = N(\mu, \sigma^2)$. The condition that
$\E(W)=1$ requires $\mu=-\sigma^2/2$ and we can compute $\gamma=-\E(\log_2 W) =
-\mu/\log2=\sigma^2/\log 4$. The
standing condition that $\E(W\log_2 W)<1$ can be shown to be equivalent to $\sigma^2 < \log 4$.
Further, the conditions in \eqref{standassump} and \eqref{extraassump} can easily be checked.
Let $S_1(p)$ and $S_2(p)$ be the general lower and upper bound given by Theorems \ref{thm:lower} and
\ref{thm:upper}, respectively, for these $W$. Then,
\[
  S_1(p) = \frac{1}{(1+p)(1+\tfrac{\sigma^2}{\log 4})}.
\]
Noting that
\[
  \E(W^t)  = \exp\Big( \tfrac{\sigma^2}{2}t(t-1) \Big),
\]
we can calculate the upper bound since \eqref{upbound} becomes the quadratic
\[
  \bdd E_{\ba} - S_2(p) = \frac{\sigma^2}{\log 4}S_2(p)(1-S_2(p)).
\]

To compute the almost sure dimension of $f(E_{\ba})$,
first note that for $x\geq \gamma$ the infimum in the numerator of the dimension formula \eqref{dimgen}
is zero. For $x\in(0,\gamma)$ the infimum occurs at
$t_0$ where
\[
  0=\tfrac{\partial}{\partial t}\Big|_{t=t_0}x t + \log_2\E(W^t) 
  =x+(2t_0-1)\frac{\sigma^2}{\log 4}\ \ \text{ giving }\  \ t_0 = \frac12 \left(1-\frac{x}{\gamma}  \right)
\]
giving
\[
  \inf_{t>0}(xt+\log_2\E(W^t)) = \frac{x}{2}\left(1-\frac{x}{\gamma}\right)-\frac{\gamma}{4} 
  \left(1-\frac{x}{\gamma}  \right)\left(1+\frac{x}{\gamma}  \right)
  =\frac{x}{2}-\frac{x^2}{4\gamma}-\frac{\gamma}{4}
  =-\frac{\left( x-\gamma \right)^2}{4\gamma}
\]
for $x<\gamma$ and $0$ otherwise. Notice in particular that the infimum is clearly continuous at
$x=\gamma$.
We obtain
\[
  \bdd f(E_{\ba}) =\sup_{0<x<\gamma}\frac{1-(x-\gamma)^2/(4\gamma)}{1+x+(1+\gamma)p}
\]
Differentiating the right hand side with respect to $x$ gives
\[
  \frac{\gamma\left( \gamma+2p\left( 1+\gamma \right)-2 \right)-x^2-2x\left( 1+p+p\gamma
  \right)}{4\gamma\left( 1+p+x+p\gamma \right)^2}.
\]
Equating this with $0$ and solving for $x$ gives two solutions since the numerator is quadratic
and the denominator is non-zero for $0<x<\gamma$. Only one solution of the quadratic is positive so
\[
  \bdd f(E_{\ba}) = \frac{1-(x_0-\gamma)^2/(4\gamma)}{1+x_0+(1+\gamma)p},
\]
where
\[
  x_0 =\sqrt{(1+p+p\gamma)^2+2p\gamma+\gamma^2+2p\gamma^2-2\gamma}-p\gamma-p-1.
\]
Figure \ref{fig:comparison2} contains a plot of the almost sure dimension of $f(E_{\ba}(p))$ with $W$ being log-normally
distributed for parameter $\sigma=\log 4 - \tfrac{1}{100}$, chosen to give clearly visible separation between
the dimension and the general bounds. 
\begin{figure}[htb]
  \begin{center}
    \includegraphics[width = .49\textwidth]{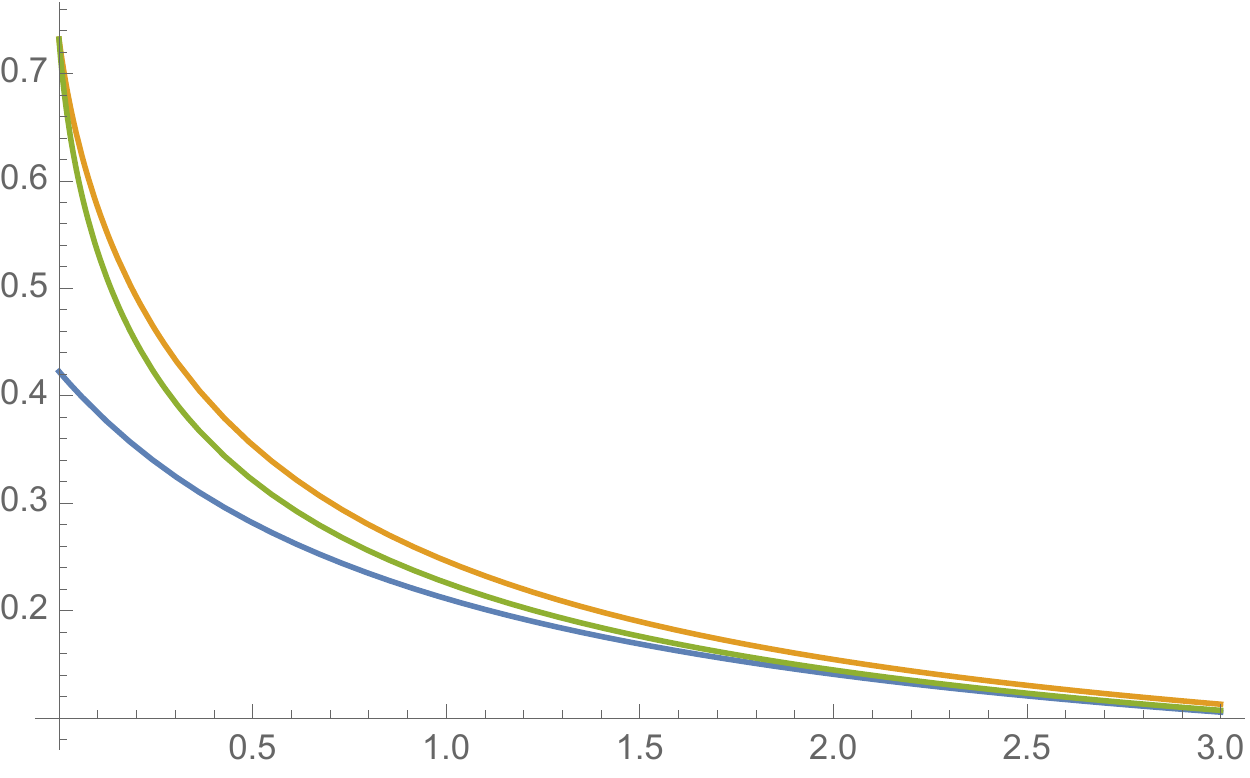}
    \hfill
    \includegraphics[width =0.49\textwidth]{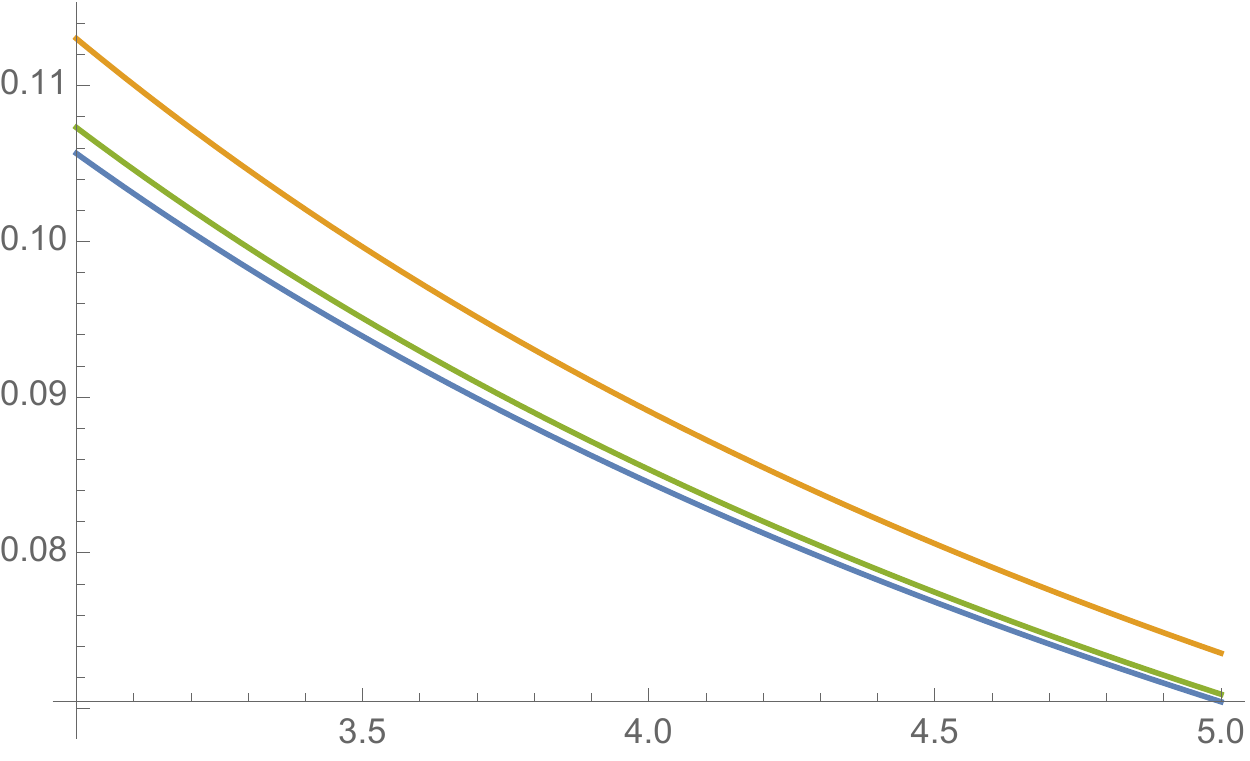}
    \caption{A plot of $S_1(p)\leq\overline\bdd f(E_{\ba})\leq S_2(p)$ for $0<p <
    3$ and $3<p<5$, where $W$ is a log-normal random variable with parameters $\sigma=\log4-1/100$.}
    \label{fig:comparison2}
  \end{center}
\end{figure}

\subsubsection{Discrete $W$}
Again, $E_\ba$ be the set formed by the sequence $\ba = \ba(p)  \in S_p$.
Fix a parameter $\xi\in (0,1)$ and let $W$ be the random variable satisfying
$\P(W=1-\xi)=1/2=\P(W=1+\xi)$. Clearly, $\E(W) = 1$ and our assumptions follow by the
boundedness of $W$. The geometric mean is $\gamma = -\E(\log_2 W) = -\log_2\sqrt{1-\xi^2}$
and Theorem \ref{thm:lower} gives the lower bound
\[
  S_1(p):=\frac{1}{(1+p)(1 -\log_2\sqrt{1-\xi^2})}.
\]
The upper bound $S_2(p)$ from  Theorem \ref{thm:upper} is implicitly given by
\[
  2^{1/(1+p)}=\frac{2^{S_2(p)}}{\frac{1}{2}(1-\sigma)^{S_2(p)} + \frac{1}{2}(1-\sigma)^{S_2(p)}}.
\]

The functions  $S_1(p) \leq \ubd f(E_{\ba}) \leq S_2(p)$  for
$\xi=\tfrac{99}{100}$ are plotted in Figure \ref{fig:comparison}.
We were unable to find a closed form for $\bdd f(E_{\ba})$ from \eqref{dimgen} and  the figure was
produced computationally.

\begin{figure}[htb]
  \begin{center}
    \includegraphics[width = .49\textwidth]{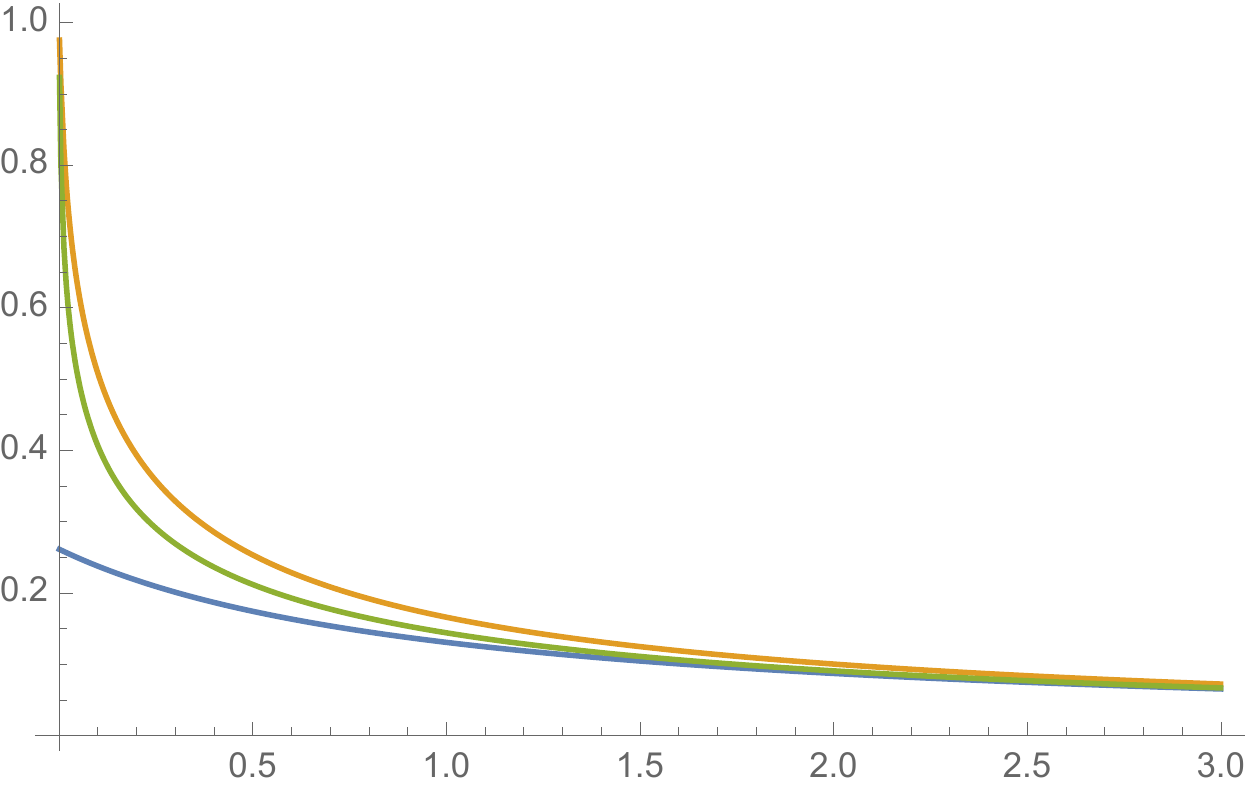}
    \hfill
    \includegraphics[width =0.49\textwidth]{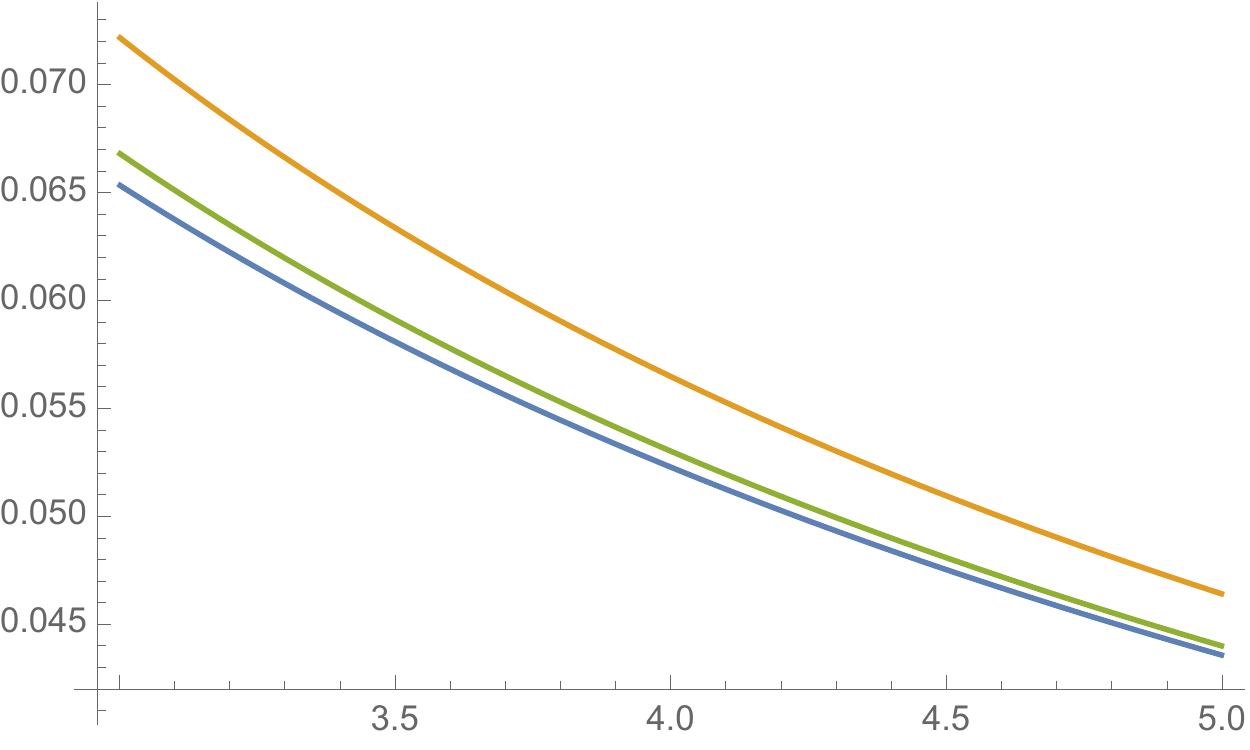}
    \caption{A plot of $S_1(p)\leq\overline\bdd f(E_{\ba})\leq S_2(p)$ for $0<p <
      3$ and $3<p<5$, where $W$ is a discrete random variable with $W=\tfrac{1}{100}$ and
    $W=\tfrac{199}{100}$ occurring with equal probability.}
    \label{fig:comparison}
  \end{center}
\end{figure}

\section{Proofs}

\subsection{General bounds}\label{genbounds}
In this section we prove Theorems \ref{thm:upper} and \ref{thm:lower} giving almost sure bounds for
$\ubd f(E)$ and $\lbd f(E)$ for a general set $E\subset [0,1]$.

\subsubsection{General upper bound}
We establish Theorem \ref{thm:upper} by estimating the expected number of intervals $I_\bi$ such
that $f(I_\bi)$ intersects $f(E)$ and $|f(I_\i)|\geq r$, to provide an almost sure bound for this
number which we  relate to the upper box-counting dimension of $f(E)$.

\begin{proof}[Proof of Theorem \ref{thm:upper}]
  First consider $\mu$ to be a subcritical cascade measure.
  Let $d>\ubd E$ and let $0<t\leq 1$ satisfy 
  \be\label{tdef}
  2^{-t}2^d\E(W^t)<1.
  \ee
  Let $k\geq 0$ and $0<r\leq 1$.  For each $I_\i\in {\mathcal I}_k$, Markov's inequality gives
  \begin{eqnarray}
    \P\{|f(I_\i)|\geq r\} &=& \P\{2^{-k}W_{i_1}W_{i_1,i_2}\cdots W_{\bi} L_{\bi}\geq r\}\nonumber\\
    &\leq& \E(2^{-kt}W_{i_1}^tW_{i_1,i_2}^t\cdots W_{\bi}^t L_{\bi}^t r^{-t})\nonumber\\
    &=& 2^{-kt}\E(W^t)^k\,\E(L^t)r^{-t}.\label{probr}
  \end{eqnarray}
  We estimate the expected number of dyadic intervals with image of length at least $r$. 
  For each $k\in \N$, let ${\mathcal J}_k$ be the set of intervals in ${\mathcal I}_k$ that
  intersect $E$ and let $N_{2^{-k}}(E)= \#({\mathcal J}_k)$ be the number of such intervals, so
  $N_{2^{-k}}(E)\leq 2^{dk}$ for all sufficiently large $k$.
  Let 
  $$A_k^r =\{\i : I_\i \in {\mathcal J}_k: |f(I_\i)|\geq r\}.$$
  From \eqref{probr}, the fact that $\E(L^t)\leq \E(L) =1$, and that  $\P\{|f(I_\i)|\geq r\} \leq 1$, 
  $$\E(\# A_k^r) \leq 2^{dk}\min\big\{1, 2^{-kt}\E(W^t)^k r^{-t}\big\}.$$
  Let $k_0$ be the least integer such that
  \be\label{kodef}
  2^{-t}\E(W^t) \leq 2^{-k_0t}\E(W^t)^{k_0} r^{-t}<1.
  \ee
  Then 
  \begin{eqnarray}
    \E\Big(\sum_{k=0}^\infty \# A_k^r\Big) & \leq& \sum_{k=0}^{\infty}2^{dk}\min\big\{1,
    2^{-kt}\E(W^t)^k r^{-t}\big\}\nonumber\\
    & \leq&  \sum_{k=0}^{k_0} 2^{dk} +  \sum_{k={k_0}+1}^\infty 2^{dk}2^{-kt}\E(W^t)^k r^{-t}\nonumber\\
    & \leq& c_1\,  2^{dk_0}\nonumber\\
    & \leq& c_1\,  (2^{t}\E(W^t)^{-1})^{k_0}\nonumber\\
    & \leq& c_1\,  r^{-t},\label{earkbound}
  \end{eqnarray}
  where we have used \eqref{tdef} and \eqref{kodef}, and where $c_1$ does not depend on $k\geq 0$ or $0<r\leq 1$.

  Note that, for $0<r<1$, the image set $f(E)$ is covered by the disjoint 
  intervals $\{f(I_\i)\}_{\i\in {\mathcal S}_r}$ where ${\mathcal S}_r =\{I_\i\in {\mathcal J}_k :
  |f(I_\i)| < r,  |f(I_{\i^-})| \geq r\}$, with
  $\i^- = i_1,\ldots,i_{k-1}$ if $\i = i_1,\ldots,i_{k}$. We denote by $N'_r(F)$ the minimal number of
  intervals of lengths at most $r$ that intersect the set $F$. Then
  \be
  N'_r(f(E))\ \leq \ \# {\mathcal S}_r \ \leq \  2  \sum_{k=0}^\infty \# A_k^r, \label{nrpr}
  \ee
  since each interval $f(I_\i)$ with $\i\in {\mathcal S}_r$ has a parent interval $f(I_{\i^-})$ with
  $|f(I_{\i^-})| \geq r$ with at most two such $f(I_\i)$ having a common parent interval.

  We now sum over a geometric sequence of  $r =2^{-n}$. Let $\epsilon>0$. 
  From \eqref{nrpr} and \eqref{earkbound}
  $$\E\big(N'_{2^{-n}}(f(E))\big) 2^{-nt-n\epsilon}\  \leq \ 2 c_3 2^{-n\epsilon},$$
  so
  $$\E\Big(\sum _{n=1}^\infty N'_{2^{-n}}(f(E)) 2^{-nt-n\epsilon}\Big)\  \leq \ 2 c_3 \sum
  _{n=1}^\infty2^{-n\epsilon} <\infty.$$
  Hence, almost surely, 
  $ N'_{2^{-n}}(f(E)) 2^{-nt-n\epsilon} $ is bounded in $n$, so from the definition of  box-counting
  dimension, noting that it is enough to take the limit through a geometric sequence $r =2^{-n}\to
  0$, we conclude that $\ubd f(E) \leq t+\epsilon$ for all $\epsilon>0$.
  Since $\epsilon$ is arbitrary $\ubd f(E) \leq t$. We may let $d\searrow d_E=\ubd E$  and
  correspondingly let $t \nearrow s$ with $t$ satisfying \eqref{tdef}, where $s$ is given by
  \eqref{upbound}, recalling that $t\mapsto 2^{t}\E(W^t)^{-1}$ is increasing and continuous. Thus
  almost surely $\ubd f(E) \leq s$ where $s$ satisfies  \eqref{upbound}.

  \vskip.5em
  If $\mu$ is the critical cascade measure, the proof follows similarly. We can first estimate
  \begin{eqnarray}
    \P\{|f(I_\i)|\geq r\} &=& \P\{\sqrt{k}\cdot2^{-k}W_{i_1}W_{i_1,i_2}\cdots W_{\bi} L_{\bi}\geq r\}\nonumber\\
    &\leq& \E(k^{t/2}\cdot2^{-kt}W_{i_1}^tW_{i_1,i_2}^t\cdots W_{\bi}^t L_{\bi}^t r^{-t})\nonumber\\
    &=& k^{t/2}2^{-kt}\E(W^t)^k\,\E(L^t)r^{-t}.\nonumber
  \end{eqnarray}
  Noting that $\E(L^t) < \infty$ for $t\in[0,1)$, see \cite[Theorem 1.5]{Hu09} or \cite[Equation
  (26)]{Barral14}, gives
  $$\E(\# A_k^r) \leq C 2^{dk}\min\big\{1, k^{t/2}2^{-kt}\E(W^t)^k r^{-t}\big\}
  \leq C k^{t/2}2^{d k} \min\big\{1, 2^{-kt}\E(W^t)^k r^{-t}\big\}$$
  for some constant $C>0$ and 
  one obtains an additional subexponential contribution to the expected covering number. The rest of
  the proof follows in much the same way and details are left to the reader.
\end{proof}

\subsubsection{General lower bound}
For the lower bound, Theorem \ref{thm:lower}, we note that, by the strong law of large numbers, 
\[
  \frac{1}{k}\log (W_{\ell_1}W_{\ell_2}\cdots W_{\ell_k})\to \E(\log W)
\]
almost surely, where the $W_{\ell_i}$ are independent with the distribution of $W$. This enables us to deduce that
a significant proportion of the intervals $f(I_\bi)$ that intersect $f(E)$ must be reasonably large.
Further, since we are taking logarithms we can ignore any subexponential growth which in particular means
that also
\[
  \frac{1}{k}\log (\sqrt{k} W_{\ell_1}W_{\ell_2}\cdots W_{\ell_k})\to \E(\log W)
\] 
almost surely.

We will use the following two lemmas.
\begin{lem}\label{thm:lowerlem}      
  Let $0<p\leq1$ and let $X_1,\ldots,X_n$ be events such that $\P(X_i)\geq p$ for all $1\leq i\leq n$. Let $0<\lambda <p$. Then
  \be
  \P\{\text{\rm at least } \lambda n\text{ \rm of the } X_i \text{ \rm occur}\} \geq \frac{p-\lambda}{1-\lambda}.\label{lem1}
  \ee 
\end{lem}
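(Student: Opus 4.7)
The plan is to apply a reverse Markov inequality to the count $N = \sum_{i=1}^n \1_{X_i}$ of events that occur. By linearity of expectation and the hypothesis $\P(X_i)\geq p$, we have $\E(N)\geq pn$.

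Next, let $A = \{N \geq \lambda n\}$ denote the event whose probability we wish to lower bound, and decompose $\E(N)$ according to $A$ and $A^c$. On $A$ we use the trivial bound $N \leq n$, while on $A^c$ we have $N < \lambda n$. This yields
\[
pn \;\leq\; \E(N) \;\leq\; n\, \P(A) + \lambda n\, (1-\P(A)).
\]
Dividing by $n$ and rearranging gives $p - \lambda \leq (1-\lambda)\,\P(A)$, which, since $\lambda < 1$, delivers the claimed bound $\P(A)\geq (p-\lambda)/(1-\lambda)$.

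There is no substantive obstacle; the argument is a one-line application of the reverse Markov inequality, and the hypothesis $\lambda < p$ is only used implicitly to ensure the bound is nontrivial (the right-hand side is positive). The lemma will later be used to extract, from the fact that typical intervals $I_\bi$ have reasonably large images with decent probability, the almost sure conclusion that a positive fraction of them actually do, which feeds into the lower bound on $\ubd f(E)$ via the strong law applied to $\frac1k\log(W_{\ell_1}\cdots W_{\ell_k})$.
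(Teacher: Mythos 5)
Your proof is correct and is essentially identical to the paper's: both compute $\E\bigl(\sum_i \1_{X_i}\bigr)\geq pn$ and split this expectation over the event $A=\{N\geq\lambda n\}$ and its complement (you via indicator functions, the paper via the law of total expectation), bounding $N$ by $n$ and $\lambda n$ respectively, then rearrange. No differences worth noting.
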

Note that there is no independence requirement on the $X_i$.

\begin{proof}
  Let $Y$ be the event $\{\text{\rm at least } \lambda n\text{ \rm of the } X_i \text{ \rm occur}\}$
  and let $\P(Y) = \rho$.  By the law of total expectation
  \begin{align*}
    pn&\leq \E(\# i : X_i  \text{ \rm occurs}) = \E(\# i : X_i  \text{ \rm occurs}| Y)\rho+\E(\# i : X_i  \text{ \rm occurs}|Y^c)(1-\rho)\\
    &\leq  n\rho + \lambda n(1-\rho).
  \end{align*}
  Hence  $p\leq \rho +\lambda(1-\rho)$ giving $\eqref{lem1}$.
\end{proof}

The following lemma can be derived from Hoeffding's inequality.

\begin{lem}\label{thm:hoeffding}
  Let $(X_i)$ be a sequence of i.i.d.\ binomial random variables with $\P(X_i=1) = p$
  and $\P(X_i=0)=1-p$.
  Then, 
  \[
    \P\bigg(\sum_{i=1}^N X_i \geq \tfrac12 p N\bigg) \geq 1-\exp\left( \tfrac12 p^2 N \right)
  \]
  and
  \[
    \P\bigg( \sum_{i=1}^N (1-X_i) \geq (1-\tfrac12 p) N \bigg) \leq \exp\left( -\tfrac12 p^2 N \right).
  \]
\end{lem}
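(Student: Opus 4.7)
The plan is to apply Hoeffding's inequality directly in both directions. Recall Hoeffding's inequality in the form: if $Y_1, \ldots, Y_N$ are independent with $Y_i \in [0,1]$ and $\E(Y_i) = m$, then for every $t > 0$,
\[
\P\!\left(\sum_{i=1}^{N} Y_i \leq (m-t)N\right) \leq \exp(-2Nt^2)
\quad \text{and} \quad
\P\!\left(\sum_{i=1}^{N} Y_i \geq (m+t)N\right) \leq \exp(-2Nt^2).
\]

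For the first inequality of the lemma, I would take $Y_i = X_i$, so $m = p$ and $Y_i \in \{0,1\} \subset [0,1]$, and apply the lower-tail bound with $t = p/2$. This gives
\[
\P\!\left(\sum_{i=1}^{N} X_i < \tfrac{1}{2}pN\right) \leq \exp\!\left(-\tfrac{1}{2}p^2 N\right),
\]
so complementing yields the lower bound on $\P(\sum X_i \geq \tfrac{1}{2}pN)$. I note that as written, the exponent in the statement should be $-\tfrac{1}{2}p^2 N$ rather than $+\tfrac{1}{2}p^2 N$ for the bound to be meaningful; I would flag this as a typo and prove the correct version.

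For the second inequality, I would set $Y_i = 1 - X_i$, which are again i.i.d.\ with $Y_i \in [0,1]$ and $\E(Y_i) = 1-p$. Applying the upper-tail bound with $t = p/2$ gives
\[
\P\!\left(\sum_{i=1}^{N}(1-X_i) \geq (1 - \tfrac{1}{2}p)N\right) = \P\!\left(\sum_{i=1}^{N} Y_i \geq (1-p + \tfrac{1}{2}p)N\right) \leq \exp\!\left(-\tfrac{1}{2}p^2 N\right),
\]
which is exactly the stated bound.

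There is no real obstacle here; this is a direct one-line invocation of Hoeffding in each of its two tail forms, with the only subtlety being the sign typo in the displayed exponent of the first inequality. The proof is effectively a bookkeeping exercise once the correct $t = p/2$ is chosen so that $(m-t)N = \tfrac{1}{2}pN$ and $(m+t)N = (1-\tfrac{1}{2}p)N$ respectively.
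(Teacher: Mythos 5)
Your proof is correct and follows essentially the same route as the paper, which also derives both bounds by a direct application of Hoeffding's inequality (the paper uses the centered form with $Y_i=-X_i$, $t=\tfrac12 pN$, which is the same computation as your mean-based form with $t=p/2$). You are also right that the exponent in the first displayed inequality of the statement should read $-\tfrac12 p^2 N$; the paper's own proof indeed concludes with $1-\exp\bigl(-\tfrac12 p^2 N\bigr)$, so this is a typo in the lemma as stated.
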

\begin{proof}
  Hoeffding's inequality states that for any sequence of independent random variables $Y_i$ with $a_i \leq Y_i\leq b_i$ and for $t>0$,
  \[
    \P\bigg(\sum_{i=1}^N (Y_i - \E(Y_i)) \geq t\bigg) \leq \exp\bigg( -\frac{2t^2}{\sum_{i=1}^N(b_i-a_i)^2}
    \bigg).
  \]
  Thus,
  \begin{align*}
    \P\bigg(\sum_{i=1}^N X_i \geq \tfrac12 p N\bigg) &\geq \P\bigg(\sum_{i=1}^N (X_i-p) >
    \tfrac12 p N-pN\bigg)\\
    &=\P\bigg(\sum_{i=1}^N (X_i-\E(X_i)) >-
    \tfrac12 p N\bigg)\\
    &=1-\P\bigg( \sum_{i=1}^N (-X_i-\E(-X_i)) \geq 
    \tfrac12 p N \bigg) \\
    &\geq 1- \exp\bigg( -\frac{2( (1/2)p N)^2}{\sum_{i=1}^N 1} \bigg)
    = 1-\exp\big( -\tfrac12 p^2 N \big),
  \end{align*}
  where we have applied Hoeffding's inequality with $Y_i = -X_i$,$t=\tfrac12 p N$,$a_i=-1$ and $b_i=0$.

  For the second inequality we similarly obtain
  \begin{align*}
    \P\bigg( \sum_{i=1}^N (1-X_i) \geq (1-\tfrac12 p) N \bigg)
    &=
    \P\bigg( (1-p)N + \sum_{i=1}^N (-X_i)-\E(-X_i) \geq (1-\tfrac12 p) N \bigg)
    \\
    &=\P\bigg(  \sum_{i=1}^N (-X_i)-\E(-X_i) \geq \tfrac12 p N  \bigg)
    \\
    &\leq \exp\left( -\tfrac12p^2 N \right).\qedhere
  \end{align*}
\end{proof}

\begin{proof}[Proof of Theorem \ref{thm:lower}]

  Write $d= \ubd E$ and let $\epsilon >0$. 
  Then, for each $\i = i_1, i_2, \ldots \in \{0,1\}^\mathbb{N}$, by the strong law of large numbers,
  $\frac{1}{k}\log (2^{-k}W_{i_1}W_{i_1,i_2}\cdots W_{i_1,\ldots i_k})\to \E(\log W)-\log 2$ almost
  surely,  so there is some $k_0\in \mathbb{N}$ such that
  \begin{equation*}
    \P\big\{2^{-k}W_{i_1}W_{i_1,i_2}\cdots W_{i_1,\ldots i_k} \geq 2^{-k(1-E(\log_2 W)+\e)} \big\} \geq {\textstyle\frac{3}{4}}
  \end{equation*}
  for all $k\geq k_0$. As  $L_{\bi}$ has the distribution of $L$, there exists $\tau>0$ such that
  $\P\{L_{\bi}\geq \tau\} =\P\{L\geq \tau\}\geq\frac{3}{4}$. Since  $|f(I_\i)| =
  2^{-k}W_{i_1}W_{i_1,i_2}\cdots W_{i_1,\ldots i_k}L_{\bi}$, and $L_{\bi}$ is independent of
  $\{W_{i_1},\ldots, W_{i_1,\ldots i_k}\}$,
  \begin{equation}\label{imest2}
    \P\big\{|f(I_\i)| \geq \tau2^{-k(1-E(\log_2 W)+\e)} \big\} \geq {\textstyle\frac{1}{2}}
  \end{equation}
  for each $\bi \in \{0,1\}^k$ if $k\geq k_0$.

  The same argument can be repeated for the critical case. Here, the strong law of large numbers
  gives $\frac{1}{k}\log (\sqrt{k}2^{-k}W_{i_1}W_{i_1,i_2}\cdots W_{i_1,\ldots i_k})\to \E(\log W)-\log 2$ almost
  surely and so for $k$ large enough,
  \begin{equation*}
    \P\big\{\sqrt{k}2^{-k}W_{i_1}W_{i_1,i_2}\cdots W_{i_1,\ldots i_k} \geq 2^{-k(1-E(\log_2 W)+\e)}
    \big\} \geq {\textstyle\frac{3}{4}}.
  \end{equation*}
  Again $L_i$ is equal to $L$ in distribution and there exists $\tau>0$ such that
  $\P\{L\geq\tau\}\geq \tfrac34$.
  We can now conclude that \eqref{imest2} also holds in the critical case.

  For each $k\in \N$, let ${\mathcal J}_k$ be the set of intervals in ${\mathcal I}_k$ that  
  intersect $E$, and let $\#({\mathcal J}_k)$ be the number of such intervals. By the definition
  of upper box-counting dimension $\#({\mathcal J}_k)\geq 2^{k(d-\epsilon)}$ for infinitely many $k$; write
  $K$ for this infinite set of $k \geq k_0$.
  Applying Lemma \ref{thm:lowerlem} to the intervals $I_\i\in{\mathcal J}_k$, taking $p=\frac{1}{2}$
  and $\lambda = \frac{1}{4}$, 
  \be\label{secbound}
  \P\big\{|f(I_\i)| \geq \tau2^{-k(1-E(\log_2 W)+\e)}  \text{ \rm for at least }
  {\textstyle\frac{1}{4} }2^{k(d-\epsilon)} \text{ \rm of the } I_\i \in {\mathcal J}_k\big\} \geq
  {\textstyle\frac{1}{3}},
  \ee
  for all $k \in K$.

  Let $N'_r(F)$ be the maximum number of disjoint intervals of lengths at least $r$ that intersect a set  $F$.
  Write $r_k = 2^{-k(1-E(\log_2 W)+\e)}$ for each $k\in \mathbb{N}$. From \eqref{secbound}, 
  $N'_{r_k}(f(E)) \geq {\textstyle\frac{1}{4} }2^{k(d-\epsilon)}$ with probability at least
  $\frac{1}{3}$ for each $k \in K$, so with probability at least $\frac{1}{3}$ it holds for
  infinitely many $k \in K$. It is easy to see that an equivalent definition of upper box-counting
  dimension is given by
  $\ubd F = \varlimsup_{r\to 0} \log_2 N'_r(F) /\log_2 (1/r)$. It is enough to evaluate this limit
  along the geometric sequence  $r=r_k$, so 
  $$
  \ubd f(E) = \varlimsup_{k\to \infty} \frac{\log_2 N'_{r_k}(F)}{-\log_2 r_k} \geq
  \frac{(d-\epsilon)}{(1-\E(\log_2 W) +\epsilon)}, 
  $$
  with probability at least $\frac{1}{3}$, and therefore with probability 1, since $\ubd f(E)\geq s$ is
  a tail event for all $s$. Since $\epsilon >0$ is arbitrary, \eqref{lowerbound} follows. 
  \medskip

  For the lower box dimensions for subcritical cascades, 
  we let $d= \lbd E$, which we may assume to be positive, and
  $0<\epsilon <d$. We need an estimate on the rate of convergence in the laws of large numbers: if
  $\E(|X|^p)<\infty$ for some $p>2$ then
  \be\label{BaumKatz}
  \sum_{k=1}^\infty \P\Big\{\Big|\sum_{i=1}^k X_i - k\mu\Big|>k\epsilon\Big\} <\infty;
  \ee
  this follows, for example, from estimates of Baum and Katz (taking $t=p$ and $r=2$ in \cite[Theorem 3(b)]{BK65}).
  For $\bi = i_1, i_2,\ldots\in \{0,1\}^\mathbb{N}$ write
  $$ P_k = \P\big\{2^{-k}W_{i_1}W_{i_1,i_2}\cdots W_{i_1,\ldots i_k} < 2^{-k(1-E(\log_2 W)+\e)} \big\}= 
  \P\Big\{\sum_{i=1}^k  \log_2 W_{\bi|k} - k \E(\log_2 W)< - k\e\Big\};$$
  noting that $P_k$ is independent of $\bi$.
  By \eqref{BaumKatz}  $\sum_{k=1}^\infty P_k <\infty$. For each $\bi\in \{0,1\}^k$ let $E_\bi$ be the event
  $$ E_\bi = \big\{2^{-k}W_{i_1}W_{i_1,i_2}\cdots W_{i_1,\ldots, i_k} \geq 2^{-k(1-E(\log_2 W)+\e)} \big\},$$
  so $\P(E_\bi) = 1-P_k$.  

  For each $k\in \N$, let ${\mathcal J}_k$ be the set of intervals in ${\mathcal I}_k$ that  
  intersect $E$, so there is a number $k_0$ such that if $k\geq k_0$ then $\#(\mathcal{J}_k)\geq 2^{k(d-\epsilon)}$. 
  Fixing $k\geq k_0$, 
  let $\mathcal{E}_k = \{\bi \in \mathcal{J}_k: E_\bi \text{ occurs}\}$,  which depends only on $\{W_\bi : |\bi| \leq k\}$.
  By Lemma \ref{thm:lowerlem},
  $$\P\big\{\#(\mathcal{E}_k)\,  \geq\,{\textstyle \frac{1}{2}}2^{k(d-\epsilon)}\big\} \geq
  \frac{1-P_k -\textstyle{\frac{1}{2}}}{1-\textstyle{\frac{1}{2}}}\ =\ 1-2 P_k.
  $$
  The random variables $\{L_\bi: \bi \in \mathcal{I}_k\}$ are independent of $\{W_\bi : |\bi| \leq
  k\}$ and of each other. Let $\P\{L_\bi\geq 1\}=\P\{L\geq 1\}=p>0$. Conditional on
  $\big\{\#(\mathcal{E}_k)  \geq {\textstyle \frac{1}{2}}2^{k(d-\epsilon)}\big\}$, a standard
  binomial distribution estimate, which follows from Hoeffding's inequality (see Lemma
  \ref{thm:hoeffding}), gives that 
  $$
  \P\big\{\#( \bi \in \mathcal{E}_k :   L_\bi\geq 1)\geq {\textstyle \frac{1}{2}}p\,\#(\mathcal{E}_k)\big\} \geq
  1-\exp\big(-{\textstyle \frac{1}{2}}p^2\#(\mathcal{E}_k) \big) 
  \geq 1-\exp\big(-{\textstyle \frac{1}{4}}p^2 2^{k(d-\epsilon)} \big).
  $$
  Hence, unconditionally, for each $k$,
  \begin{align*}
    \P\Big\{\#&\big( \bi \in \mathcal{I}_k :|f(I_\bi)|\geq 2^{-k(1-\E(\log_2 W)+\e) }\big)\geq {\textstyle \frac{1}{4}}p2^{k(d-\epsilon)}\Big\}\\
    &\geq  \P\Big\{\#\big( \bi \in \mathcal{I}_k : 2^{-k}W_{i_1}W_{i_1,i_2}\cdots W_{\bi}\geq 2^{-k(1-\E(\log_2 W)+\e)}\text{ and }  L_\bi\geq 1 \big)\geq {\textstyle \frac{1}{4}}p2^{k(d-\epsilon)}\Big\}\\
    & \geq (1-2 P_k)\big(1-\exp\big(-{\textstyle \frac{1}{4}}p^2 2^{k(d-\epsilon)} \big)\big)\\
    & \geq 1-2 P_k-\exp\big(-{\textstyle \frac{1}{4}}p^2 2^{k(d-\epsilon)} \big).
  \end{align*}
  Since $\sum_{k=1}^\infty 2P_k <\infty$ and $\sum_{k=1}^\infty \exp\big(-{\textstyle
  \frac{1}{4}}p^2 2^{k(d-\epsilon)} \big) <\infty$, the Borel-Cantelli lemma implies that, with probability one, 
  $$\#\big\{ \bi \in \mathcal{I}_k :|f(I_\bi)|\geq 2^{-k(1-\E(\log_2 W)+\e)}\big\}\geq {\textstyle
  \frac{1}{4}}p2^{k(d-\epsilon)}$$
  for all sufficiently large $k$.
  As in the upper dimension part, but taking lower limits, it follows that $\lbd f(E) \geq
  {(d-\epsilon)}{(1-\E(\log_2 W)+\e )}$ for all $\epsilon >0$, giving \eqref{lowerbound2}.

  For the lower box dimensions and critical cascades we note that
  $$ P_k = \P\big\{\sqrt{k}\cdot 2^{-k}W_{i_1}W_{i_1,i_2}\cdots W_{i_1,\ldots i_k} < \sqrt{k}\cdot
  2^{-k(1-E(\log_2 W)+\e)} \big\}.$$
  Following the same argument as above with the additional $\sqrt{k}$ term we conclude that 
  $$\#\big\{ \bi \in \mathcal{I}_k :|f(I_\bi)|\geq \sqrt{k}\cdot2^{-k(1-\E(\log_2 W)+\e)}\big\}\geq {\textstyle
  \frac{1}{4}}p2^{k(d-\epsilon)}$$
  for sufficiently large $k$.
  Again, taking lower limits and noting that $\tfrac{1}{k}\log\sqrt{k}\to 0$ we get the required
  lower bound for critical cascades. 
\end{proof}

\subsubsection{Asymptotic behaviour}
\begin{proof}[Proof of Proposition \ref{thm:asymptotics}]
  Solving \eqref{eq:lower} for $d$ and substituting in \eqref{eq:upper} gives
  \[
    s_2(1-\E(\log_2 W))  = s_1 - \log_2\E(W^{s_1}).
  \]
  Rearranging gives
  \[
    \frac{s_1}{s_2} = \frac{1-\E(\log_2 W)}{1-\log_2\E(W^{s_1})^{1/s_1}}
    =\frac{\log 2-\E(\log W)}{\log 2 - \log\E(W^{s_1})^{1/s_1}}.
  \]
  Note that $s_1,s_2 \to 0$ as $d\to 0$. 
  Recall that our assumptions imply $\E(\log W)<\log 2$ and $\E(W^t)<\infty$ for all $t\in[0,1]$.
  It is well-known that the power means converge to the geometric mean, i.e.\ $\E(W^{s_1})^{1/s_1}\to
  \exp\E(\log W)$. Combining this with the above means that $s_1/s_2\to 1$ as required.
\end{proof}

\subsection{Box dimension of images of decreasing sequences}
We now proceed to the substantial proof of Theorems \ref{thm:dimea} from which we easily deduce
Theorem \ref{thm:dimgen}. First, the following lemma notes some properties of the expressions that
occur in \eqref{dimeal} and \eqref{dimgen}, in particular it follows that they are continuous in
$\alpha$ and $p$ respectively (for example, the right hand side of \eqref{dimeal} is
$\phi\big((1+\gamma)/\alpha)\big)$ with $\phi$ as in  \eqref{phidef}).

\begin{lem}\label{thm:monotone}
  (a) For $x\geq 0$ let
  \[
    \psi(x):= \inf_{t\geq 0}\left( xt+\log_2\E(W^t) \right).
  \]
  If $x\geq \gamma $ this infimum is attained at $t=0$. If $x\in (0,\gamma)$ the infimium is attained
  at $t\in (0,1)$.
  Furthermore $\psi(x)$ is continuous for $x\geq 0$.

  (b) For $\beta \geq 0$ let
  \be\label{phidef}
  \phi(\beta) = \sup_{x>0} \frac{1+\inf_{t>0}\left( xt+\log_2\E(W^t) \right)}{1+x+\beta}.
  \ee
  Then $\phi$ is strictly decreasing and continuous in $\beta$. 
\end{lem}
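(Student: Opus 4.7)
The plan is to treat the two parts separately: (a) reduces to calculus on a strictly convex one-variable function, and (b) reduces to a supremum over a compact interval that depends only on $\gamma$.

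For part (a), fix $x \geq 0$ and set $g_x(t) = xt + \log_2 \E(W^t)$. The integrability assumption $\E(W^t) < \infty$ for all $t > 0$ (from \eqref{extraassump}) combined with the differentiation under the integral noted before the statement of Theorem \ref{thm:dimea} gives $g_x'(t) = x + \E(W^t \log_2 W)/\E(W^t)$; the positivity identity \eqref{derivratio} (valid up to the factor $1/\log 2$ with $\log_2$ replacing $\log$) shows that $t \mapsto \E(W^t \log_2 W)/\E(W^t)$ is strictly increasing, so $g_x$ is strictly convex. Since $g_x(0) = 0$ and $g_x'(0) = x - \gamma$, the case $x \geq \gamma$ gives $g_x' \geq 0$ throughout $[0,\infty)$ and so $\psi(x) = g_x(0) = 0$. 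For $0 < x < \gamma$, $g_x'(0) < 0$ whereas $g_x'(1) = x + \E(W \log_2 W) > 0$ (using $x > 0$ together with Jensen's inequality $\E(W\log W) \geq 0$ for the convex function $u \mapsto u\log u$), so strict convexity yields a unique minimizer $t^\ast(x) \in (0,1)$. Continuity of $\psi$ on $(0,\gamma)$ is then immediate from the implicit function theorem applied to $g_x'(t^\ast) = 0$, while continuity at the endpoint $x = \gamma$ follows because the first-order condition $\E(W^{t^\ast}\log_2 W)/\E(W^{t^\ast}) = -x$ forces $t^\ast(x) \to 0$ as $x \nearrow \gamma$, hence $\psi(x) \to g_\gamma(0) = 0$; continuity for $x > \gamma$ is trivial since $\psi \equiv 0$ there.

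For part (b), write $h(x,\beta) = (1+\psi(x))/(1+x+\beta)$. Since $\psi$ vanishes on $[\gamma,\infty)$ by (a), $h(\cdot,\beta)$ is strictly decreasing on that half-line with value $1/(1+\gamma+\beta)$ at $x = \gamma$. Consequently the supremum defining $\phi(\beta)$ may be restricted to the compact interval $[0,\gamma]$ uniformly in $\beta$. On this compact set $h$ is jointly continuous in $(x,\beta)$ by (a), so the supremum is attained at some $x_\beta \in [0,\gamma]$, and we obtain the trivial lower bound $\phi(\beta) \geq h(\gamma,\beta) = 1/(1+\gamma+\beta) > 0$.

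For strict monotonicity, fix $\beta_1 < \beta_2$ and let $x_{\beta_2} \in [0,\gamma]$ attain $\phi(\beta_2)$. Positivity of $\phi(\beta_2)$ forces
\[
1 + \psi(x_{\beta_2}) = (1 + x_{\beta_2} + \beta_2)\,\phi(\beta_2) > 0,
\]
whence
\[
\phi(\beta_1) \geq \frac{1+\psi(x_{\beta_2})}{1+x_{\beta_2}+\beta_1} > \frac{1+\psi(x_{\beta_2})}{1+x_{\beta_2}+\beta_2} = \phi(\beta_2).
\]
Continuity of $\phi$ then follows from the standard fact that the supremum of a jointly continuous function over a fixed compact parameter set is continuous in the remaining variable. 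The only mildly delicate point, and the main (modest) obstacle, is ensuring that the argmax lies in a region where $1 + \psi > 0$ so that strict (rather than merely weak) decrease can be extracted; this is precisely what the positivity bound $\phi(\beta) \geq 1/(1+\gamma+\beta)$ furnishes.
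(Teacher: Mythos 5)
Your proof is correct and follows essentially the same route as the paper: strict convexity of $g_x(t)=xt+\log_2\E(W^t)$ via \eqref{derivratio} together with the endpoint derivatives $g_x'(0)=x-\gamma$ and $g_x'(1)>0$ for part (a), and restriction of the supremum to the compact interval $[0,\gamma]$ plus joint continuity for part (b). If anything, your treatment of strict monotonicity --- locating the argmax $x_{\beta_2}$ and verifying $1+\psi(x_{\beta_2})>0$ from the lower bound $\phi(\beta)\geq 1/(1+\gamma+\beta)$ --- is slightly more careful than the paper's one-line appeal to pointwise strict decrease in $\beta$.
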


\begin{proof}
  (a) Let $g_x(t) =  xt+\log_2\E(W^t)$ for $x\geq 0$ and $t\geq 0$. Then $g_x''(t)>0$ by
  \eqref{derivratio} so $g_x$ is a strictly convex function. Also  $g'_x(t) =  x+\frac{\E(W^t \log_2
  W)}{\E(W^t)}$, so in particular, $g'_x(0) = x+ \E(\log_2 W)=x-\gamma$ and $g'_x(1) = x+ \E(W
  \log_2 W)> x+\E(W)\log_2 \E(W)= x>0$, by Jensen's inequality and  that $W$ is not almost surely
  constant, so the conclusions in (a) on the infimum follows. The function $\psi$ is continuous for
  $x\geq 0$ since it is the Legendre transform of the twice continuously differentiable strictly
  convex function $\log_2\E(W^t)$.
  \medskip

  (b) Now consider the function 
  $$\eta(x,\beta)  =  \frac{1+\psi(x) }{1+x+\beta}, \qquad (x\in [0,\gamma], \beta \geq 0), $$
  which is continuous for $(x,\beta) \in [0,\infty) \times [0,\gamma]$, and note that $\phi(\beta) =
  \sup_{x\in [0,\gamma]}\eta(x,\beta)$. Since the supremum in $\phi(\beta)$ is over a bounded
  interval, it is an exercise in basic analysis to see that  $\phi$ is continuous in $\beta$ and
  that, since $\eta(x,\beta)$ is strictly decreasing in $\beta$ for each $x$, $\phi$ is strictly
  decreasing.
\end{proof}

\subsubsection{Upper bound for $\dim_B f(E^\alpha)$}

Throughout this section, the distribution of $W$, and so $\gamma = -\E(\log_2 W)$, are fixed, as is $\alpha>0$.

First we bound the expected number of intervals of length at most $r$ needed to cover the part of
$f(E^\alpha \cap  [2^{-k},2^{-k+1}])$ by bounding the expected number of dyadic intervals $I_\bi$ in
$[2^{-k},2^{-k+1}]$ that intersect $E$ such that $|f(I_\bi)| \geq r$.

\begin{lem}\label{thm:newprelim}
  Let $0< \e<\gamma$. Let $k\in \mathbb{N}$ and suppose that $W_0 W_{00}\ldots W_{0^{k-1}} \leq
  a2^{-(k-1)(\gamma-\e)}$ for some $a>0$. Then for all $0<t<1$, there exists  $c_t >0$ such that 
  \begin{equation}\label{enr}
    \E\big(N_r(f(E^\alpha\cap [2^{-k},2^{-k+1}]))\big) \leq c_t r^{-t}2^{-kt(1+\gamma-\e)} \big(2^{1-t}\E(W^t)\big)^{\alpha k} +k
  \end{equation}
  for all $0<r<1$. The numbers $c_t$ may be taken to vary continuously in $t\in (0,1)$ and do not depend on $\e, k$ or $r$. 
\end{lem}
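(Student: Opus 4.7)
My plan is to decompose $E^\alpha \cap [2^{-k}, 2^{-k+1}]$ according to its dyadic coding tree and apply Markov level by level, following the strategy of the proof of Theorem \ref{thm:upper}. The dyadic intervals meeting this set form a distinctive subtree: at each level $\ell < k$ only the single stem interval $I_{0^\ell}$ appears; at level $\ell = k+j$ (for $0 \leq j \leq m := \lfloor \alpha k\rfloor$) exactly the $2^j$ intervals $I_{0^{k-1}1\bj'}$ with $\bj' \in \{0,1\}^j$ appear; and at levels beyond $k+m$ there are $2^m$ stems of zeros descending from each leaf. Writing $A_\ell^r$ for the level-$\ell$ intervals in this subtree with $|f(I_\bi)| \geq r$, the parent-child counting argument leading to \eqref{nrpr} gives $N_r(f(E^\alpha\cap [2^{-k},2^{-k+1}])) \leq 2\sum_\ell \#A_\ell^r$.

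I would then split the sum into the three regimes. For $\ell < k$ the trivial bound $\#A_\ell^r \leq 1$ supplies the additive $+k$ in the conclusion. For $\ell = k+j$ with $0\leq j \leq m$, factor
\[
|f(I_\bi)| = 2^{-(k+j)}\, S_k \cdot W_{\bi|_k}\prod_{i=1}^{j} W_{\bi|_{k+i}}\cdot L_\bi,
\]
where $S_k = W_0 W_{00}\cdots W_{0^{k-1}}$ is deterministic under the hypothesis with $S_k \leq a\, 2^{-(k-1)(\gamma-\e)}$, and the remaining weights are independent of $S_k$ with distribution of $W$. Markov with exponent $t \in (0,1)$, using $\E(L^t) \leq \E(L) = 1$ (Jensen, subcritical), combined with summation over the $2^j$ intervals gives
\[
\E\,\#A_{k+j}^r \leq c_t'\, r^{-t}\, 2^{-kt(1+\gamma-\e)}\bigl(2^{1-t}\E(W^t)\bigr)^j.
\]
An analogous Markov estimate at level $\ell = k+m+s$ handles the tail with an additional factor $(2^{-t}\E(W^t))^s$.

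To close the argument I sum the two geometric series. Since $\E(W)=1$ and $W$ is non-constant, Jensen forces $\E(W^t) < 1$ on $(0,1)$, so $2^{-t}\E(W^t) < 1$ and the tail sum over $s$ converges to a bounded constant continuous in $t$. For the main sum $\sum_{j=0}^m (2^{1-t}\E(W^t))^j$, a short calculation using $\E(W\log_2 W)<1$ shows that $t \mapsto 2^{1-t}\E(W^t)$ is strictly decreasing from $2$ at $t=0$ to $1$ at $t=1$; in particular it strictly exceeds $1$ on the open interval $(0,1)$, which lets me dominate the sum by a $t$-continuous constant times its last term, producing the claimed $(2^{1-t}\E(W^t))^{\alpha k}$ factor (using $m\leq\alpha k$). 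The main technical obstacle is keeping the constant $c_t$ uniformly continuous in $t$ on $(0,1)$ and independent of $k$, $r$ and $\e$: the geometric-series prefactor $2^{1-t}\E(W^t)/(2^{1-t}\E(W^t)-1)$ blows up as $t\to 1^-$ but remains finite on every compact subinterval of $(0,1)$, while continuity of $a^t$, $\E(W^t)$ and $\E(L^t)$ follows from dominated convergence using the standing integrability assumptions.
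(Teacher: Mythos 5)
Your proposal is correct and follows essentially the same route as the paper's proof: the same three-way decomposition into the $k$ stem intervals, the $2^j$ intervals at levels $k+j$ for $0\le j\le\lfloor\alpha k\rfloor$, and the descending zero-tails, with Markov's inequality at exponent $t$ applied to each, the two geometric series summed using $2^{-t}\E(W^t)<1$ and $1<2^{1-t}\E(W^t)$ on $(0,1)$, and the factor $2$ from the parent--child covering argument. Your explicit verification that $t\mapsto 2^{1-t}\E(W^t)$ is strictly decreasing (hence $>1$ on $(0,1)$) and your remark on where the constant degenerates are correct refinements of facts the paper asserts without comment.
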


\begin{proof}
  We bound from above the expected number of dyadic intervals $I_\bi$ which intersect $E^\alpha\cap
  [2^{-k},2^{-k+1}]$ such that $|f(I_\bi)| \geq r$.
  We split these intervals into three types.
  \medskip

  \noindent (a) There are $k$ intervals $I_\emptyset, I_{0}, I_{00},\ldots,I_{0^{k-1}}$ which cover
  $E^\alpha\cap [2^{-k},2^{-k+1}]$ to give the right-hand term of \eqref{enr}.
  \medskip

  \noindent (b) Consider $I_\bi$ of the form $\bi = 0^{k-1}1\bj$ where $\bj \in \{0,1\}^{j}$ and
  $0\leq j = |\bj| \leq \lfloor \alpha k\rfloor$. Then
  \begin{align}
    \P\big(|f(I_\bi)| \geq r\big)
    &= \P\big(2^{-(k+j)}W_0 W_{00}\ldots W_{0^{k-1}}W_{0^{k-1}1}W_{0^{k-1}1j_1}\ldots W_{0^{k-1}1j_1\ldots j_j}L_\i \geq r\big)\nonumber\\
    &\leq \P\big(2^{-(k+j)}a2^{-(k-1)(\gamma-\e)}W_{0^{k-1}1}W_{0^{k-1}1j_1}\ldots W_{0^{k-1}1j_1\ldots j_j}L_\i \geq r\big)\nonumber\\ 
    &\leq  a^t r^{-t}2^{-(k+j)t}2^{-(k-1)(\gamma-\e)t}\E\big(W_{0^{k-1}1}^t W_{0^{k-1}1j_1}^t\ldots W_{0^{k-1}1j_1\ldots j_j}^t L_\i^t\big)\label{pfiia}\\
    &=  \big(a^t 2^{(\gamma-\e)t} \E(W^t) \E( L^t) \big)\,r^{-t} 2^{-kt(1+\gamma-\e)} \big(2^{-t} \E\big(W^t )\big)^j \label{pfii}
  \end{align}
  where we have raised the condition to power $t$  and used Markov's inequality and the independence of the $W$s and $L_\i$.
  Hence for each $0<j \leq \lfloor \alpha k\rfloor$,
  \begin{align}
    \E\big(\# \bi: \bi =  0^{k-1}1\bj ,\,  |\bj|= j \text{ and }  |f(I_\bi)| \geq r \big) &= 2^j \P\big( |f(I_\bi)| \geq r \big)\nonumber\\
    &\leq b_t\, r^{-t} 2^{-kt(1+\gamma-\e)}\big(2^{1-t} \E\big(W^t )\big)^j  \label{enoi}
  \end{align}
  using \eqref{pfii}, where $b_t = a^t 2^{\gamma t} \E(W^t) \E( L^t)$. Since $1< 2^{1-t} \E\big(W^t
  )<2$ for $t\in (0,1)$, we can sum \eqref{enoi} over $0\leq j \leq \lfloor \alpha k\rfloor$ to get 
  \begin{equation}\label{uptoak}
    \E\big(\# \bi: \bi =  0^{k-1}1\bj ,\, 0\leq |\bj| \leq \lfloor \alpha k\rfloor \text{ and }  |f(I_\bi)| \geq r \big) 
    \ \leq\  b'_t\, r^{-t} 2^{-kt(1+\gamma-\e)}\big(2^{1-t} \E\big(W^t )\big)^{\lfloor \alpha k\rfloor},
  \end{equation}
  where  $b'_t = b_t/\big(1- (2^{t-1} \E(W^t )^{-1})\big)$. Note that $b'_t$ is continuous on $(0,1)$.
  \medskip

  \noindent (c) Now consider $I_\bi$ of the form $\bi = 0^{k-1}1\bj0^\ell $ where $\bj \in
  \{0,1\}^{\lfloor \alpha k\rfloor}$ and $1\leq \ell<\infty$. Then, as in case (b) but including the
  terms for levels $k+\lfloor \alpha k\rfloor +\ell$, we get, just as in \eqref{pfiia},
  \begin{align}
    \P\big(|f(I_\bi)| \geq r\big)
    &\leq  a^t r^{-t}2^{-(k+\lfloor \alpha k\rfloor +\ell)t }2^{-(k-1)(\gamma-\e)t}\nonumber\\
    &\hspace{2cm}\cdot\E\big(W_{0^{k-1}1}^t W_{0^{k-1}1j_1}^t\ldots W_{0^{k-1}1\bj}^tW_{0^{k-1}1\bj 0}^tW_{0^{k-1}1\bj 00}^t\ldots W_{0^{k-1}1\bj 0^\ell}^t L_\i^t\big)\nonumber\\
    &=  \big(a^t 2^{(\gamma-\e)t} \E(W^t) \E( L^t) \big)\,r^{-t} 2^{-kt(1+\gamma-\e)} \big(2^{-t} \E\big(W^t )\big)^{\lfloor \alpha k\rfloor +\ell} \label{pfiic}.
  \end{align}
  Hence for each $1\leq \ell<\infty$,
  \begin{align}
    \E\big(\# \bi : \bi= 0^{k-1}1\bj0^\ell ,\,  |\bj| =  \lfloor \alpha k\rfloor & \text{ and }  |f(I_\bi)| \geq r \big) = 2^{\lfloor \alpha k\rfloor} \P\big( |f(I_\bi)| \geq r \big)\nonumber\\
    &\leq b_t\, r^{-t} 2^{\lfloor \alpha k\rfloor} 2^{-kt(1+\gamma-\e)}\big(2^{-t} \E\big(W^t )\big)^{\lfloor \alpha k\rfloor +\ell} \label{enoib}
  \end{align}
  using \eqref{pfiic}, where $b_t = a^t 2^{\gamma t} \E(W^t) \E( L^t)$ as above. Since
  $\frac{1}{2}\leq 2^{-t} \E\big(W^t )<1$ we can sum \eqref{enoib} over $1\leq \ell<\infty$ to get 
  \begin{align}
    \E\big(\# \bi : \bi= 0^{k-1}1\bj0^\ell ,\,  |\bj| =  \lfloor \alpha k\rfloor,\, &\ell\geq 1  \text{ and }  |f(I_\bi)| \geq r \big)\nonumber\\ 
    &\leq b_t\, r^{-t} 2^{\lfloor \alpha k\rfloor} 2^{-kt(1+\gamma-\e)}\big(2^{-t} \E\big(W^t )\big)^{\lfloor \alpha k\rfloor +1}\big/  \big( 1 - 2^{-t} \E\big(W^t )\big)\nonumber\\
    &\leq b_t^{''}\, r^{-t} 2^{-kt(1+\gamma-\e)}\big(2^{1-t} \E\big(W^t )\big)^{\lfloor \alpha k\rfloor}\label{enoi2}
  \end{align}
  where $b_t^{''} = b_t(2^{-t} \E\big(W^t )) / \big( 1 - 2^{-t} \E\big(W^t )\big)$ is continuous in $t$. 
  \medskip

  For $0<r<1$,  let $\mathcal{J}^{(r)}$ be the collection of all intervals $I_\bi$ of the form
  considered in (a),(b),(c) above that intersect $E^\alpha$ and such that $|f(I_{\bi^-})| \geq r$
  and $|f(I_{\bi})| < r$, where if $\bi = i_1 i_2\ldots i_{j}$ then $\bi^- = i_1 i_2\ldots i_{j-1}$,
  so the intervals $f(I_\bi)$ with $\bi\in \mathcal{J}^{(r)}$ have length at most $r$ and cover
  $f(E^\alpha\cap [2^{-k},2^{-k+1}])$. Each  $I_\bi \in \mathcal{J}^{(r)}$ has a `parent' interval
  $I_\bi^-$ with at most two intervals in $\mathcal{J}^{(r)}$ having a common parent interval. These
  parent intervals have $|f(I_{\bi^-})| \geq r$ and are included in those counted in (a),(b),(c) so
  $N_r(f(E^\alpha\cap [2^{-k},2^{-k+1}]))$ is bounded above by twice this number of intervals.

  Hence, combining (a), \eqref{uptoak} and \eqref{enoi2} we obtain \eqref{enr}, where
  $c_t=2\max\{b_t', b_t^{''}\}$ is continuous on $(0,1)$ and we can replace $\lfloor \alpha
  k\rfloor$ by $\alpha k$.
\end{proof}

By writing $r$ in an appropriate form relative to $2^{-k}$, we can bound the expectation in the
previous Lemma by $r$ raised to a suitable exponent. Note that in the following lemma we have to
work with the infimum over $[t_1,t_2]$ where $0<t_1<t_2<1$ in order to get a uniform constant
$c(t_1,t_2)$. At the end of the proof of Proposition \ref{propnew} we show that the infimum can be
taken over $t>0$.

\begin{lem}\label{thm:newprelim2}
  Let $0<\e<\gamma$. Let $k\in \mathbb{N}$ and suppose that $W_0 W_{00}\ldots W_{0^{k-1}} \leq
  a2^{-(k-1)(\gamma-\e)}$ for some $a>0$. Then for all $0<t_1<t_2<1$, there exists  $c(t_1,t_2) >0$,
  independent of $k,r$ and $\e$, such that,  provided that $t_2(\e):=1/(1+(1+\gamma-\e)/\alpha)<t_2<1$, 
  \begin{equation}\label{enr2}
    \E\big(N_r(f(E^\alpha\cap [2^{-k},2^{-k+1}]))\big) \leq c(t_1,t_2) r^{-\phi(t_1,t_2,\e)} +k
  \end{equation}
  for all $0<r<1$, where
  $$\phi(t_1,t_2,\e)=  \sup_{x>0} \frac{1+\inf_{t\in
  [t_1,t_2]}\left(xt+\log_2\mathbb{E}(W^t)\right)}{1+x+(1+\gamma-\e)/\alpha}.$$
\end{lem}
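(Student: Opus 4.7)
The strategy is to optimise the $t$-dependent bound from Lemma~\ref{thm:newprelim} over $t \in [t_1,t_2]$ and recognise the optimised bound as $c(t_1,t_2)\,r^{-\phi(t_1,t_2,\e)}$, using a Legendre-transform style duality between the infimum over $t$ and the supremum over an auxiliary parameter. Setting $\beta := (1+\gamma-\e)/\alpha$ and, for each admissible $(r,k)$, introducing $x := -\log_2 r /(\alpha k) - (1+\beta)$ (so $x$ ranges over $(-(1+\beta),\infty)$), a short direct manipulation rewrites the main term in Lemma~\ref{thm:newprelim} as
\[
c_t\, r^{-t}2^{-kt(1+\gamma-\e)}(2^{1-t}\mathbb{E}(W^t))^{\alpha k} = c_t\cdot 2^{\alpha k[1+tx+\log_2\mathbb{E}(W^t)]},
\]
while the target bound reads $c(t_1,t_2)\,r^{-\phi} = c(t_1,t_2)\cdot 2^{\alpha k\phi(1+x+\beta)}$. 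Since Lemma~\ref{thm:newprelim} tells us that $t \mapsto c_t$ is continuous on $(0,1)$, I set $c(t_1,t_2) := \max_{t\in[t_1,t_2]}c_t<\infty$, and the proof reduces to showing that for every $x > -(1+\beta)$ there exists $t = t(x)\in[t_1,t_2]$ with
\[
1 + tx + \log_2\mathbb{E}(W^t)\ \leq\ \phi(t_1,t_2,\e)(1+x+\beta).
\]

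For $x>0$ this inequality is immediate on choosing $t=t^*(x)\in[t_1,t_2]$, the (unique, by strict convexity of $\log_2\mathbb{E}(W^\cdot)$) minimiser of $s \mapsto sx + \log_2\mathbb{E}(W^s)$ on $[t_1,t_2]$: the left hand side then equals $1 + h(x)$ with $h(x) = \inf_{s\in[t_1,t_2]}(sx+\log_2\mathbb{E}(W^s))$, and the inequality is exactly the definition of $\phi$ as $\sup_{y>0}(1+h(y))/(1+y+\beta)$. Writing $G(x):=1+h(x)$ and $L(x):=\phi(1+x+\beta)$, we observe that $G$ is concave (as an infimum of affine functions of $x$) while $L$ is affine, and that $L\ge G$ on $(0,\infty)$.

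The main obstacle is extending the inequality $G(x)\le L(x)$ to $x\in(-(1+\beta),0]$, and this is where the hypothesis $t_2 > t_2(\e) = 1/(1+\beta)$ enters. The plan is to show that the supremum defining $\phi$ is attained at some \emph{strictly interior} $x^*>0$; once this is in place, the first-order condition at $x^*$ forces simultaneously $L(x^*)=G(x^*)$ and $L'(x^*)=\phi=t^*(x^*)=G'(x^*)$, so that $L$ is the supporting tangent line to $G$ at $x^*$ and, by concavity of $G$, $L\ge G$ throughout the domain of $G$; in particular on $(-(1+\beta),0]$. To establish interior attainment I compute the right-derivative of $x \mapsto G(x)/(1+x+\beta)$ at $x=0$, which equals $(t^*(0)(1+\beta) - G(0))/(1+\beta)^2$, and argue that the hypothesis $t_2(1+\beta) > 1 \geq G(0)$ (using $G(0) = 1 + \log_2\mathbb{E}(W^{t^*(0)}) \leq 1$ since $\log_2\mathbb{E}(W^t)\le 0$ on $[0,1]$ by Jensen's inequality) together with the location of $t^*(0)$ relative to $[t_1,t_2]$ makes this derivative strictly positive, so the ratio is still increasing at $x=0^+$ and the supremum cannot lie on the boundary. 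With interior attainment secured, choosing $t=t^*(x)$ in Lemma~\ref{thm:newprelim} for each $x$ and re-adding the stem term $+k$ yields \eqref{enr2}; verifying positivity of this right-derivative in the borderline case $t^*(0)<t_2$ is where the bulk of the technical work resides.
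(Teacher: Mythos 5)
Your setup, your change of variables $x=-\log_2 r/(\alpha k)-(1+\beta)$ with $\beta=(1+\gamma-\e)/\alpha$, and your treatment of the regime $x>0$ all coincide with the paper's proof, where that case is likewise immediate from the definition of $\phi$ as a supremum. Where you diverge is the regime $x\le 0$. The paper fixes $t=t_2$ and uses the hypothesis $t_2>t_2(\e)$ to show that $x\mapsto(1+xt_2+\log_2\E(W^{t_2}))/(1+x+\beta)$ is increasing on $(-(1+\beta),0]$, reducing everything to the single value $x=0$. You instead propose to prove that the supremum defining $\phi$ is attained at an interior point $x^*>0$ and then to run a supporting-tangent-line argument for the concave function $G(x)=1+\inf_{t\in[t_1,t_2]}(xt+\log_2\E(W^t))$ against the affine function $L(x)=\phi(1+x+\beta)$. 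That tangent-line step is sound, but it stands or falls with interior attainment, which you do not prove: you explicitly defer the case $t^*(0)<t_2$ as ``technical work''.

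That deferred case is not a technicality; it is where your mechanism breaks. Take $W$ log-normal, so $\log_2\E(W^t)=\gamma t(t-1)$, with $\gamma=0.1$, $\e=0.01$, and $\alpha=10.9$ so that $\beta=0.1$ and $t_2(\e)=1/1.1$; take $[t_1,t_2]=[0.01,0.95]$, which satisfies all hypotheses. Then $t^*(0)=1/2$, $G(0)=1-\gamma/4=0.975$, and the numerator $t^*(0)(1+\beta)-G(0)=0.55-0.975$ of your right-derivative is strictly negative. In fact $x\mapsto G(x)/(1+x+\beta)$ is strictly decreasing on all of $(0,\infty)$ here, so $\phi=0.975/1.1\approx0.8864$ is a boundary supremum that is not attained, and no interior $x^*$ exists. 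Worse, the pointwise inequality you reduce the lemma to is simply false for some negative $x$: at $x=-0.05$ the minimiser is $t=0.75$, giving $G(-0.05)/(1+x+\beta)=0.94375/1.05\approx0.8988>\phi$, so no choice of $t\in[t_1,t_2]$ satisfies $1+tx+\log_2\E(W^t)\le\phi(1+x+\beta)$ there. Your reduction therefore cannot be completed in this regime by any amount of technical work; the $x\le0$ case needs a genuinely different mechanism (the paper's route is the monotonicity reduction to the single value at $x=0$ with $t=t_2$, and even there the remaining comparison of that value with $\phi$ is the delicate point, as the same example shows).
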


\begin{proof}
  In Lemma \ref{thm:newprelim} $c_t$ is continuous and positive on $(0,1)$, so let $c(t_1,t_2) = \sup_{t\in [t_1,t_2]} c_t>0$.
  For $0<r<1$ and $k \in \mathbb{N}$ define $x_k(r)> -1-(1+\gamma-\e)/\alpha $ by  
  \begin{equation}\label{xkrdef}
    r= 2^{-k(\alpha(1+x_k(r))+(1+\gamma-\e))}.
  \end{equation}
  We bound the right hand side of \eqref{enr} using \eqref{xkrdef}. For $t\in [t_1,t_2]$,
  \begin{align*}
    \log_2\big( r^{-t}&2^{-kt(1+\gamma-\e)}\big(2^{1-t}\E(W^t)\big)^{\alpha k}\big)\\
    &= 
    \log_2( r^{-t}) -kt(1+\gamma-\e) + \alpha k (1-t +\log_2 \E(W^t))\\
    & = kt\big(\alpha(1+x_k(r))+(1+\gamma-\e)\big) -kt\big(1+\gamma-\e\big) + \alpha k \big(1-t +\log_2 \E(W^t)\big)\\
    & = \alpha k\big(1+x_k(r)t +\log_2 \E(W^t)\big)
  \end{align*}
  Changing the base of logarithms to $1/r$ and taking the infimum over $t\in [t_1,t_2]$,
  \begin{align*}
    \log_{1/r}\Big(\inf_{t\in [t_1,t_2]}&\big( r^{-t}2^{-kt(1+\gamma-\e)}\big(2^{1-t}\E(W^t)\big)^{\alpha k}\big)\Big)\\
    & \leq \alpha k\big(1+\inf_{t\in [t_1,t_2]}(x_k(r)t +\log_2 \E(W^t))\big)\big/\big(k(\alpha(1+x_k(r))+(1+\gamma-\e))\big)\\
    &=  \big(1+\inf_{t\in [t_1,t_2]}   (x_k(r)t +\log_2\E(W^t))\big)\big/\big(1+x_k(r)+(1+\gamma-\e)/\alpha\big)\\
    & \leq \phi(t_1,t_2,\e).
  \end{align*}
  Inequality \eqref{enr2} now follows from \eqref{enr} by taking the supremum over $x\equiv x_k(r)>  -1-(1+\gamma-\e)/\alpha$. If $x\leq 0$,
  $$ \frac{1+\inf_{t\in [t_1,t_2]}   (xt +\log_2\E(W^t))}{1+x+(1+\gamma-\e)/\alpha}
  \leq \frac{1+xt_2 +\log_2\E(W^{t_2})}{1+x+(1+\gamma-\e)/\alpha} \leq \frac{1+0t_2 +\log_2\E(W^{t_2})}{1+0+(1+\gamma-\e)/\alpha},$$
  since, by calculus, the middle term is increasing in $x$ for  $-1-(1+\gamma-\e)/\alpha< x\leq 0$,
  provided that  $t_2(\e)<t_2<1$, so it is enough to take the supremum over $x>0$.
\end{proof}


It remains to sum the estimates in Lemma \ref{thm:newprelim2} over $1\leq k \leq K$ for an
appropriate $K$ and make a basic estimate to cover $f(E^\alpha\cap [0,2^{-K}]))$. The Borel-Cantelli
lemma leads to a suitable bound for $N_r(f(E^\alpha))$ for all sufficiently small $r$, and finally
we note that the infimum can be taken over $t>0$.

\begin{prop}\label{propnew}
  Let $\alpha>0$. Under the assumptions in Theorem \ref{thm:dimea}, but without the need for
  \eqref{extraassump}, almost surely,
  \begin{equation}\label{updimallt}
    \overline{\dim}_B(f(E^\alpha)) \leq 
    \sup_{x>0} \frac{1+\inf_{t>0}\left(xt+\log_2\mathbb{E}(W^t)\right)}{1+x+(1+\gamma)/\alpha}.
  \end{equation} 
\end{prop}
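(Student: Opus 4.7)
The plan is to apply Lemma \ref{thm:newprelim2} on every scale $[2^{-k},2^{-k+1}]$, control the stem product via the strong law of large numbers, handle the remaining tail inside $[0,2^{-K}]$ separately, sum and invoke Markov with Borel--Cantelli along the geometric sequence $r_n=2^{-n}$, and finally take the limit in the auxiliary parameters $t_1,t_2,\e$.

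First I would apply the strong law to the i.i.d.\ sequence $(\log_2 W_{0^j})_{j\geq1}$, obtaining $k^{-1}\log_2(W_0 W_{00}\cdots W_{0^{k-1}})\to -\gamma$ almost surely; so for each fixed $\e\in(0,\gamma)$ there is an $\mathcal{S}$-measurable, a.s.\ finite $a=a(\omega,\e)$ with $W_0\cdots W_{0^{k-1}}\leq a\,2^{-(k-1)(\gamma-\e)}$ for all $k\geq 1$, where $\mathcal{S}$ denotes the $\sigma$-algebra generated by the stem $\{W_{0^k}\}_{k\geq1}$. On this full-measure event Lemma \ref{thm:newprelim2} produces, for any admissible $t_1<t_2$,
\[
\E\bigl(N_r(f(E^\alpha\cap[2^{-k},2^{-k+1}]))\,\big|\,\mathcal{S}\bigr)\;\leq\; c(t_1,t_2)\,r^{-\phi(t_1,t_2,\e)}+k.
\]
For the truncated tail, set $K(r)=\lceil -\log_2 r\rceil$; then $f(E^\alpha\cap[0,2^{-K(r)}])$ is contained in an interval of length at most $a\,L_{0^{K(r)}}\cdot 2^{-K(r)(1+\gamma-\e)+(\gamma-\e)}$. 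Since $\E(L)=1<\infty$ in the subcritical regime, $\P(L_{0^K}\geq 2^{\e K})\leq 2^{-\e K}$ is summable and Borel--Cantelli gives $L_{0^K}\leq 2^{\e K}$ eventually, so (for $\e<\gamma/2$) the tail length is $O(r^{1+\gamma-2\e})=o(r)$ and is covered by $O(1)$ intervals of length $r$.

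Summing the per-scale estimates over $1\leq k\leq K(r)$ yields $\E\bigl(N_r(f(E^\alpha))\mid\mathcal{S}\bigr)=O\bigl(|\log r|\,r^{-\phi(t_1,t_2,\e)}\bigr)$. Along $r_n=2^{-n}$, for each $\delta>0$ conditional Markov gives
\[
\P\bigl(N_{r_n}(f(E^\alpha))\geq r_n^{-\phi(t_1,t_2,\e)-\delta}\,\big|\,\mathcal{S}\bigr)\;=\;O\bigl(n\,c(t_1,t_2)\,2^{-n\delta}\bigr),
\]
and since $c(t_1,t_2)$ is a.s.\ finite on $\mathcal{S}$ the sum converges almost surely. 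Conditional Borel--Cantelli followed by monotonicity of $r\mapsto N_r$ between the $r_n$ gives $\overline{\dim}_B f(E^\alpha)\leq \phi(t_1,t_2,\e)+\delta$ almost surely. Finally, restricting to a countable sequence of parameter choices with $t_1\to 0^+$, $t_2\to 1^-$, and $\e,\delta\to 0^+$ lets us pass to \eqref{updimallt}: Lemma \ref{thm:monotone}(a) confines the optimal $t$ in $\inf_{t>0}(xt+\log_2\E(W^t))$ to $[0,1]$ so the inner infimum over $[t_1,t_2]$ converges to the infimum over $t>0$, while Lemma \ref{thm:monotone}(b) supplies continuity of $\phi$ in the parameter $(1+\gamma-\e)/\alpha$.

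The main obstacle is the conditional bookkeeping: the multiplicative ``constants'' $a(\omega)$ and $c(t_1,t_2)$ are $\mathcal{S}$-measurable but not uniformly bounded in $\omega$, so the Markov and Borel--Cantelli steps must be carried out conditionally on $\mathcal{S}$ before a final expectation is taken. A secondary point is to verify that the tail estimate relies only on $\E(L)<\infty$ and not on the stronger hypothesis \eqref{extraassump}, which is indeed the case in the subcritical regime; this is how the conclusion is obtained without the extra moment assumptions on $W$.
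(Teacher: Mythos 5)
Your proposal is correct and follows essentially the same route as the paper: stem control via the strong law of large numbers, Lemma \ref{thm:newprelim2} applied scale by scale conditionally on the stem, a separate (and here slightly different but equally valid) treatment of the tail $[0,2^{-K}]$, conditional Markov plus Borel--Cantelli along $r_n=2^{-n}$, and then limits in $t_1,t_2,\e,\delta$. The only step worth making explicit is that passing from $\inf_{t\in[t_1,t_2]}$ to $\inf_{t>0}$ \emph{inside} the supremum over $x$ needs uniformity in $x$, which the paper secures via $\inf_{t\in[\tau,1-\tau]}(xt+\log_2\E(W^t))\leq\inf_{t\in[0,1]}(xt+\log_2\E(W^t))+(x+M)\tau$ together with the boundedness of $(x+M)/\bigl(1+x+(1+\gamma-\e)/\alpha\bigr)$ over $x>0$.
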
 

\begin{proof}

  Let $0<\e<\gamma$ and let $0<t_1<t_2<1$ with $t_2(\e)<t_2$, where $t_2(\e)$ is as in Lemma \ref{thm:newprelim2}. By the strong law of large numbers, $(W_0 W_{00}\ldots
  W_{0^{k}})^{1/k} \to 2^\gamma$ as $k\to\infty$, so almost surely there exists a random number $A>0$ such that 
  $W_0 W_{00}\ldots W_{0^{k}} \leq A\, 2^{-k(\gamma-\e)}$ for all $k\in \mathbb{N}$.
  We condition on $\{W_{0^j}: j\in \mathbb{N}\}$ and let  $A$ be this number.

  Given $0<r<1/2$, set $K = \lfloor \log_2(1/r)\rfloor$. Then, covering by intervals of lengths $1/r$,
  \begin{align*}
    \E\big(N_r(f(E^\alpha\cap [0,2^{-K}]))\big) &\leq \E \big(r^{-1} 2^{-K}W_0 W_{00}\ldots W_{0^{K}}L_{0^{K}}\big)\\
    &\leq r^{-1} 2^{-K}A\, 2^{-K(\gamma-\e)}\E (L_{0^{K}})\\
    &\leq A\,r^{-1} 2^{1+\gamma-\e} r^{1+\gamma -\e}\E (L)\\
    &= A\, 2^{1+\gamma-\e}\E (L)\, r^{\gamma -\e}.
  \end{align*}
  Thus, using Lemma \ref{thm:newprelim2}, taking $a$ as this random $A$ and the same $\e$,
  \begin{align*}
    \E\big(N_r(f(E^\alpha\cap [0,1]))\big)
    &\leq \E\big(N_r(f(E^\alpha\cap [0,2^{-K}]))\big) +\sum_{k=1}^K \E\big(N_r(f(E^\alpha\cap [2^{-k}, 2^{-k+1}]))\big)\\
    &\leq A\, 2^{1+\gamma-\e}\E (L)\, r^{\gamma -\e} + K c(t_1,t_2) r^{-\phi(t_1,t_2,\e)} +K^2\\
    &\leq A\, 2^{1+\gamma-\e}\E (L)\, r^{\gamma -\e} + \log_2(1/r)c(t_1,t_2) r^{-\phi(t_1,t_2,\e)} + ( \log_2(1/r))^2\\
    &= O\big(r^{-\phi(t_1,t_2,\e)}\log_2(1/r)\big)
  \end{align*}
  for small $r$.
  Hence, conditional on $\{W_0^j: j\in \mathbb{N}\}$, almost surely,
  $$\P\big(N_r(f(E^\alpha\cap [0,1]))\geq r^{-\phi(t_1,t_2,\e)-\delta} \big) 
  \leq r^{\delta /2}$$
  for $r$ sufficiently small, using Markov's inequality, so the Borel-Cantelli lemma taking $r=2^{-n}$ gives that 
  $N_r(f(E^\alpha\cap [0,1]))\leq r^{-\phi(t_1,t_2,\e)-\delta}$ for all sufficiently small $r$, almost surely.

  We conclude that, almost surely, for all $0<t_1<t_2<1$ with $t_2(\e)<t_2$,
  \begin{equation}\label{updim}
    \overline{\dim}_B(f(E_\alpha)) \leq 
    \sup_{x>0} \frac{1+\inf_{t\in [t_1,t_2]}\left(xt+\log_2\mathbb{E}(W^t)\right)}{1+x+(1+\gamma-\e)/\alpha} +\delta 
  \end{equation} 
  for all $\delta >0$. For $0<\tau< \min\{1/2,1-t_2(\e)\}$,
  $$
  \inf_{t\in [\tau,1-\tau]}(xt+\log_2\mathbb{E}(W^t))\leq \inf_{t\in [0,1]}(xt+\log_2\mathbb{E}(W^t)) + (x+ M)\tau,
  $$ 
  where $M$ is the maximum of the derivative of  $\mathbb{E}(W^t)$ over $[0,1]$.
  Substituting this in the numerator of \eqref{updim} with $t_1= \tau$ and $t_2= 1-\tau$, and noting that 
  $(x+M)/\big(1+x+(1+\gamma-\e)/\alpha\big)$ is bounded for $x>0$, we may let $\tau\searrow 0$, so
  that we may take the infima over $t\in [0,1]$ in \eqref{updim} and thus over $t>0$ using Lemma \ref{thm:monotone}(a).
  We may then let $\delta\searrow 0$ in \eqref{updim} and finally let $\e\searrow 0$, using the
  continuity in $\e$ from Lemma \ref{thm:monotone}(b), to get \eqref{updimallt}.
\end{proof}

\subsubsection{Lower  bound for $\dim_B f(E^\alpha)$}
To obtain the lower bound of Theorem \ref{thm:dimea} we establish a bound on the
distribution of the products $W_{i_1}\dots W_{i_1\dots i_{n}}$ of independent random variables on a binary tree.
We will use a well-known relationship between the free energy of the Mandelbrot
measure that goes back to Mandelbrot \cite{Mandelbrot74} and has been proved in a very general
setting in Attia and Barral \cite{Attia14}.

\begin{prop}[Attia and Barral \cite{Attia14}]\label{thm:attia}
  Let $X$ be a random variable with finite logarithmic moment function $\Lambda(q) = \log\E(e^{q X})$ for
  all $q\geq 0$. Write $R(x) = \inf_{q\in\R} (\Lambda(q) - xq)$ for the rate function and
  assume that $\Lambda(q)$ is twice differentiable for $q>0$.
  If  $\{X_\bi: \bi\in \cup_{j=1}^\infty \{0,1\}^j\}$ are independent and identically distributed
  with the distribution of $X$, then,
  \[
    \lim_{\eps\to0}\lim_{n\to\infty} \frac{1}{n}\log_2 \#\Big\{ \bi\in \{0,1\}^n :
    \sum_{j=1}^{n}X_{i_1\dots i_j}\in[n(x-\eps),n(x+\eps)]\Big\}
    =1+\frac{R(x)}{\log 2}.
  \]
\end{prop}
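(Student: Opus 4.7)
The plan is to treat this as a branching random walk large deviation estimate on the binary tree. Let
\[
N_n = N_n(x,\eps) := \#\Big\{\bi \in \{0,1\}^n : \tfrac{1}{n}\sum_{j=1}^{n} X_{i_1 \cdots i_j} \in [x-\eps, x+\eps]\Big\},
\]
and write $S_n$ for a sum of $n$ i.i.d.\ copies of $X$. For any fixed $\bi$, the walk $\sum_{j=1}^n X_{i_1\cdots i_j}$ is distributed as $S_n$, so $\E[N_n] = 2^n\,\P(S_n/n \in [x-\eps,x+\eps])$. Since $\Lambda$ is finite on $[0,\infty)$ and $C^2$ on $(0,\infty)$, Cramér's theorem gives
\[
\lim_{\eps\to 0}\lim_{n\to\infty}\tfrac{1}{n}\log \P(S_n/n \in [x-\eps,x+\eps]) = R(x),
\]
so $\lim_\eps \lim_n \tfrac{1}{n}\log_2 \E[N_n] = 1 + R(x)/\log 2$, which is the target value.

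For the upper inequality I would first apply Markov's inequality: for any $\delta>0$, $\P(N_n \geq 2^{n(1+R(x)/\log 2 + \delta)}) \leq 2^{-n\delta/2}$ once $n$ is large and $\eps$ small. Borel--Cantelli across $n \in \N$, together with a countable sequence $\delta_k \searrow 0$, then yields $\limsup_n \tfrac{1}{n}\log_2 N_n \leq 1 + R(x)/\log 2$ almost surely after sending $\eps \to 0$.

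The matching lower bound is the substantive step, and uses the second moment method in the regime $1+R(x)/\log 2 > 0$ (in the complementary regime the stated lower bound is vacuous). Paley--Zygmund gives
\[
\P\big(N_n \geq \tfrac{1}{2}\E[N_n]\big) \geq \frac{(\E N_n)^2}{4\,\E[N_n^2]}.
\]
To control $\E[N_n^2]$, decompose pairs $(\bi,\bj)$ according to the length $\ell$ of their common prefix: conditional on $\bi|_\ell = \bj|_\ell$ the partial sums coincide through step $\ell$ and then evolve as independent walks. Using the exponential tilt $q^\ast$ with $\Lambda'(q^\ast) = x$, one shows that the dominant contribution to $\E[N_n^2]$ comes from $\ell = 0$ and $\ell = n$, giving $\E[N_n^2] \leq P(n,\eps)\,(\E N_n)^2$ with $P$ sub-exponential in $n$. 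A self-similar branching argument (restricting to disjoint sub-trees of depth $n$ rooted at levels $m \to \infty$, each providing an independent trial of the Paley--Zygmund estimate) then upgrades this in-probability statement to the desired almost sure lower bound.

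The main obstacle is the second-moment estimate: at common-prefix level $\ell$ the conditional pair probability is essentially $\int \exp\bigl(\ell R(s/\ell) + 2(n-\ell) R((nx - s)/(n-\ell))\bigr)\,ds$, with $\asymp 2^{2n-\ell}$ such pairs, and one must use strict concavity of $R$ (guaranteed by strict convexity of $\Lambda$, equivalently $X$ not a.s.\ constant) to push the Laplace maximum to the endpoints $\ell \in \{0,n\}$, where the total matches $(\E N_n)^2$ up to polynomial corrections. The secondary hurdle, boot-strapping from convergence in probability of $\tfrac{1}{n}\log_2 N_n$ to an almost sure limit, is precisely where the martingale/self-similar cascade structure of the binary tree enters; this is the technical core of the Attia--Barral framework that is being cited rather than re-derived here.
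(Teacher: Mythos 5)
You should first note that the paper does not prove this proposition at all: it is quoted as a special case of Theorem 1.3(1) of Attia and Barral, with the reader referred to that paper, so there is no internal argument to compare yours against. Taken on its own terms, your sketch follows the standard first/second moment strategy for branching random walks. The first-moment half is fine: $\E N_n = 2^n\,\P(S_n/n\in[x-\eps,x+\eps])$, Cram\'er's theorem in $\R$ (which needs no moment assumptions for the lower bound) identifies $\lim_\eps\lim_n\frac1n\log_2\E N_n=1+R(x)/\log 2$, and Markov plus Borel--Cantelli gives the almost sure upper bound.

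The genuine gap is in the second-moment estimate. Your claim that the Laplace maximum over the common-prefix length $\ell$ sits at the endpoints $\ell\in\{0,n\}$, so that $\E[N_n^2]\le P(n,\eps)(\E N_n)^2$ with $P$ subexponential, is false in part of the regime $1+R(x)/\log2>0$. The prefix is free to deviate from the line $j\mapsto jx$: writing $I=-R$ and optimising the prefix endpoint $h$ at level $\ell=\theta n$ in $\theta n\,I\bigl(h/(\theta n)\bigr)+2(1-\theta)n\,I\bigl((nx-h)/((1-\theta)n)\bigr)$ does not return $h=\theta nx$ unless $I'(x)=0$. Concretely, for Gaussian increments ($I(y)=y^2/2$, so $1+R(x)/\log2>0$ iff $x^2<2\log2$) the optimal cost at level $\theta n$ is $nx^2/(1+\theta)$, and the level-$\theta n$ contribution to $\E[N_n^2]$ exceeds $(\E N_n)^2$ by the factor $\exp\bigl(\theta n(x^2/(1+\theta)-\log2)\bigr)$, which is exponentially large for small $\theta$ whenever $\log2<x^2<2\log2$ --- squarely inside the regime where the proposition has content. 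Strict concavity of $R$ does not push the maximum to the endpoints; it is exactly what makes the wandering prefix profitable. The standard repair, and what Biggins and Attia--Barral actually do, is a truncated second moment: one counts only paths whose entire trajectory stays in a tube around $j\mapsto jx$, which forces the prefix back onto the line and restores $\E[\tilde N_n^2]\lesssim(\E\tilde N_n)^2$ up to subexponential factors, after which Paley--Zygmund and your subtree bootstrap go through. Without that truncation (or an equivalent spine/change-of-measure argument) your sketch does not prove the proposition in the stated generality.
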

We refer the reader to the well-written account of the history of this statement in \cite{Attia14},
where Proposition \ref{thm:attia} is a special case of their Theorem 1.3(1), see in particular
(1.1) and situation (1) discussed in \cite[page 142]{Attia14}.
Note that the application of this theorem requires the strongest assumptions thus far on the random
variable $W$.

We derive a version of this Proposition suited to our setting.

\begin{lem}\label{thm:GW}
  Let $\eps,\delta>0$ and $0<q_0<1$, and choose $0<x<\gamma$ such that $\inf_{t>0}\left(
  2^{1+xt}\E(W^t) \right)>1$. Then there exists $n_0\in\mathbb{N}$ such that
  \begin{align}
    \mathbb{P}\bigg( \#\Big\{ \bi\in\{0,1\}^{n} : W_{i_1}\dots W_{i_1\dots i_{n}}&\geq 2^{-(x+\delta)n} \Big\}\nonumber \\
      &\geq 2^{-\eps n}\Big(\inf_{t>0}\big( 2^{1+xt}\E(W^t) \big)
    \Big)^{n}\;\text{for all}\;n\geq n_0 \bigg)  \geq q_0.\label{lbprob}
  \end{align}
\end{lem}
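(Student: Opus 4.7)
The plan is to apply Proposition \ref{thm:attia} (Attia--Barral) to the i.i.d.\ random variables $X_\bi := \log W_\bi$ on the binary tree, after converting the multiplicative inequality into an additive one. Writing $S_n(\bi) = \sum_{j=1}^n X_{i_1\ldots i_j}$, the condition $W_{i_1}\cdots W_{i_1\ldots i_n} \geq 2^{-(x+\delta)n}$ becomes $S_n(\bi) \geq -(x+\delta)n\log 2$. Fix a small auxiliary parameter $\eps_1 \in (0,\delta\log 2)$ and put $w := -x\log 2$. Then $w - \eps_1 > -(x+\delta)\log 2$, so every $\bi$ for which $S_n(\bi)/n \in [w - \eps_1, w + \eps_1]$ automatically meets the target bound. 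Let $N_n(w,\eps_1)$ count such $\bi$. Proposition \ref{thm:attia} gives, almost surely, that for each fixed $\eps_1 > 0$, $\frac{1}{n}\log_2 N_n(w,\eps_1) \to V(w,\eps_1)$ as $n\to\infty$, and $V(w,\eps_1)\downarrow 1 + R(w)/\log 2$ as $\eps_1 \to 0^+$; monotonicity of the count in $\eps_1$ yields the crucial one-sided bound $V(w,\eps_1) \geq 1 + R(w)/\log 2$ for every $\eps_1 > 0$.

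The next task is to identify this rate with $\log_2 M$, where $M := \inf_{t>0}\bigl(2^{1+xt}\E(W^t)\bigr)$. Substituting $w = -x\log 2$ and using $\E(e^{qX}) = \E(W^q)$,
\[
R(w) = \inf_{q\in\R}\bigl(\log\E(W^q) + qx\log 2\bigr) = \log 2\cdot\inf_{q\in\R}\bigl(xq + \log_2\E(W^q)\bigr).
\]
The derivative of $q\mapsto xq + \log_2\E(W^q)$ at $q=0$ equals $x + \E(\log_2 W) = x - \gamma < 0$ by the hypothesis $x < \gamma$, so the infimum is attained at some $q>0$ and $\inf_{q\in\R} = \inf_{q>0}$. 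Hence $1 + R(w)/\log 2 = 1 + \inf_{q>0}\bigl(xq + \log_2\E(W^q)\bigr) = \log_2 M$, and consequently $V(w,\eps_1) \geq \log_2 M$.

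The final step is to upgrade the almost sure convergence to a uniform-in-$n$ probability bound. Almost sure convergence of $\frac{1}{n}\log_2 N_n(w,\eps_1)$ to a limit $\geq \log_2 M$ implies that, almost surely, $N_n(w,\eps_1) \geq 2^{n(\log_2 M - \eps/2)}$ for all sufficiently large $n$. By the continuity of probability along nested events, $\P\bigl(N_n(w,\eps_1) \geq 2^{n(\log_2 M - \eps/2)} \text{ for all } n \geq n_0\bigr) \nearrow 1$ as $n_0\to\infty$, so we may choose $n_0$ making this probability at least $q_0$. Since $2^{n(\log_2 M - \eps/2)} \geq 2^{-\eps n} M^n$ for large $n$ and every such $\bi$ satisfies $W_{i_1}\cdots W_{i_1\ldots i_n} \geq 2^{-(x+\delta)n}$, inequality \eqref{lbprob} follows.

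The principal obstacle is the careful bookkeeping between the natural-log Legendre transform supplied by Proposition \ref{thm:attia} and the base-$2$ infimum $\log_2 M$ appearing in the statement, and in particular verifying that the unrestricted infimum over $q\in\R$ coincides with $\inf_{q>0}$ (this is where the assumption $x<\gamma$ is used). A secondary technical point is the exchange of the two limits in the Attia--Barral statement: we do not need the inner limit for arbitrarily small $\eps_1$, only monotonicity $V(w,\eps_1)\geq 1 + R(w)/\log 2$, which lets us freeze a single $\eps_1 < \delta\log 2$ throughout the argument.
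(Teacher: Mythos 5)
Your proposal is correct and follows essentially the same route as the paper: apply the Attia--Barral large-deviation count to the logarithms of the weights, use monotonicity of the count in the window width to extract the one-sided bound for a fixed small window, identify the rate with $\log_2\inf_{t>0}(2^{1+xt}\E(W^t))$ via the sign of the derivative at $t=0$ (where $x<\gamma$ enters), and convert the almost sure eventual bound into a uniform-in-$n$ probability bound (the paper invokes Egorov's theorem where you use continuity along nested events; these are interchangeable here). The only differences are cosmetic: you work with natural logarithms and a window parameter $\eps_1<\delta\log 2$ rather than the paper's base-$2$ normalisation and its $\delta$-interval.
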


\begin{proof}
  Using Proposition \ref{thm:attia} with $X=\log_2 W$, $\Lambda(t) = \log \E(e^{t \log_2
  W})=\log_2\E(W^t)$, $R(x) = \inf_{t\in\R}\big(\log_2\E(W^t)-xt\big)$, and replacing $x$ by $-x$, we see that almost surely, 
  \begin{multline*}
    \lim_{\delta\to 0}\lim_{n\to\infty}\frac{1}{n}\log_2\#\left\{ \i\in\{0,1\}^{n} : W_{i_1}\dots W_{i_1 \dots i_{n}} \in \big[
    2^{-(x+\delta) n},2^{-(x-\delta)n}\big]\right\}
    \\=1+\inf_{t\in\R} \big(xt+\log_2\mathbb{E}(W^t)\big)
    =\log_2\inf_{t\in\R} 2^{1+xt}\mathbb{E}(W^t).
  \end{multline*}
  Since we are, for the moment, restricting to $0<x<\gamma$, we can
  assume that the infimum occurs when $t>0$ by Lemma \ref{thm:monotone}

  Since the event  $W_{i_1}\dots W_{i_1 \dots i_{n}} \in [2^{-(x+\delta) n},2^{-(x-\delta)n}]$
  decreases as $\delta\to 0$, for all  $\delta>0$, almost surely,
  \[  \lim_{n\to\infty}\frac{1}{n}\log_2\#\left\{ \i\in\{0,1\}^{n} : W_{i_1}\dots W_{i_1 \dots i_{n}} \in \big[
    2^{-(x+\delta) n},2^{-(x-\delta)n}\big]\right\}
    \geq\log_2\inf_{t\in\R} 2^{1+xt}\mathbb{E}(W^t).
  \]
  By Egorov's theorem, there exists $n_0$ such that with probability at least $q_0$, 
  \[\frac{1}{n}\log_2\#\left\{ \i\in\{0,1\}^{n} : W_{i_1}\dots W_{i_1 \dots i_{n}} \in \big[
    2^{-(x+\delta) n},2^{-(x-\delta)n}\big]\right\}
    \geq\log_2\inf_{t\in\R} 2^{1+xt}\mathbb{E}(W^t)-\epsilon.
  \]
  for all $n\geq n_0$, from which \eqref{lbprob} follows.
\end{proof}

We now develop Lemma \ref{thm:GW} to consider the independent subtrees with nodes a little way down
the main binary tree to get the probabilities to converge to 1 at a geometric rate. When we apply
the following lemma, we will take $\eps,\delta$ to be small and $\lambda$ close to 1.

\begin{lem}\label{thm:lb2}
  Assume that $\E(W^{-u})<\infty$ for some $u>0$.  Let $0<x<\gamma$ be such that $\inf_{t>0}\left(
  2^{1+xt}\E(W^t) \right)>1$, and let $\eps>0$ be sufficiently small so that $2^{-\eps} \inf_{t>0}(
  2^{1+xt}\E(W^t) \big)>1$. Let $\delta>0$ and $0<\lambda<1$. Then there exists $\eta>0$,
  $0<\theta<1$ and $k_0\in\mathbb{N}$, such that for all $k\geq k_0$, 
  \begin{align}
    \mathbb{P}\bigg( \#\Big\{ \bi\in\{0,1\}^{k} :  W_{i_1}\dots W_{i_1\dots i_{k}}&L_{i_1\dots i_{k}}\geq 2^{-(x+\delta)\lceil\lambda k\rceil -\eta \lfloor(1-\lambda)k \rfloor} \Big\}\nonumber \\
      &\geq (1-p/2) 2^{-\eps \lceil\lambda k\rceil}\Big(\inf_{t>0}\big( 2^{1+xt}\E(W^t) \big)
    \Big)^{\lceil\lambda k\rceil} \bigg)  \geq 1-\theta^k,\label{lbprob2}
  \end{align}
  where $p =\P(L \geq 1) >0$.
\end{lem}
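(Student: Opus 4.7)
The plan is to decompose each word $\bi\in\{0,1\}^k$ as $\bi=\bi'\bi''$ with $|\bi'|=n_1:=\lfloor(1-\lambda)k\rfloor$ and $|\bi''|=n_2:=\lceil\lambda k\rceil$, and to exploit the independence of the $2^{n_1}$ subtrees rooted at distinct level-$n_1$ nodes. The full product factorises as a stem part depending only on $\{W_\bj:|\bj|\le n_1\}$ and a subtree part depending only on $\{W_{\bi'\bj}:1\le|\bj|\le n_2\}$. It will therefore suffice to find many $\bi$ simultaneously satisfying (i) the stem product $\prod_{j=1}^{n_1}W_{i_1\cdots i_j}\ge 2^{-\eta n_1}$, (ii) the subtree product $\prod_{j=n_1+1}^{k}W_{i_1\cdots i_j}\ge 2^{-(x+\delta)n_2}$, and (iii) $L_\bi\ge 1$, since (i)--(iii) jointly imply $W_{i_1}\cdots W_{i_1\cdots i_k}L_\bi\ge 2^{-\eta n_1-(x+\delta)n_2}$.

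For (i), the assumption $\E(W^{-u})<\infty$ combined with Markov's inequality yields, for each fixed $\bi'$, the bound $\P(\prod_{j=1}^{n_1}W_{i_1\cdots i_j}<2^{-\eta n_1})\le(2^{-u\eta}\E(W^{-u}))^{n_1}$. Choosing $\eta$ large enough that $2^{-u\eta}\E(W^{-u})<1/4$ and applying a union bound over the $2^{n_1}$ stems, with probability at least $1-2^{-n_1}$ every stem satisfies (i). For (ii), I apply Lemma \ref{thm:GW} to each of the $2^{n_1}$ subtrees; these involve disjoint families of $W$-variables and are therefore independent of each other and of the stems. With $q_0$ chosen close to $1$ (say $q_0>1/2$) and $n_2\ge n_0$, each subtree has at least $M_0:=2^{-\eps n_2}(\inf_{t>0}(2^{1+xt}\E(W^t)))^{n_2}$ leaves with good $W$-product with probability at least $q_0$. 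A Chernoff/Hoeffding estimate on these $2^{n_1}$ independent Bernoulli indicators then shows that at least $(q_0/2)2^{n_1}$ subtrees are good, with failure probability at most $e^{-c\cdot 2^{n_1}}$.

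On the intersection of the events from (i) and (ii), the number of $\bi\in\{0,1\}^k$ satisfying both is at least $M:=(q_0/2)\cdot 2^{n_1}\cdot M_0$. The variables $\{L_\bi:|\bi|=k\}$ are i.i.d.\ with $\P(L_\bi\ge 1)=p$ and independent of all $W$-variables, so Lemma \ref{thm:hoeffding} applied to these $M$ Bernoulli-like indicators gives that at least $(p/2)M$ of them also satisfy (iii), with probability at least $1-e^{-p^2M/2}$. The hypothesis $2^{-\eps}\inf_{t>0}(2^{1+xt}\E(W^t))>1$ ensures $M_0$ grows geometrically in $n_2$, so for all sufficiently large $k$ one has $(p/2)M\ge(1-p/2)M_0$, giving the claimed bound on the count. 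Summing the three failure probabilities produces a bound of the form $\theta^k$ for a suitable $\theta<1$ and all $k\ge k_0$.

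The main obstacle is that Lemma \ref{thm:GW} only delivers a fixed probability $q_0$, not one decaying in $k$, so a single subtree will not suffice to obtain the geometric rate $1-\theta^k$. This forces me to exploit independence across all $2^{n_1}$ subtrees in order to amplify that constant probability into the rate $\exp(-c\cdot 2^{n_1})\le\theta^k$. A secondary care is the calibration of $\eta$ in the stem step: since stems of distinct $\bi'$ share prefixes and are highly correlated, I can only handle them via a union bound, which in turn requires $\eta$ to be chosen large enough that the per-stem Markov bound beats the $2^{n_1}$ factor---this is precisely where the negative moment assumption $\E(W^{-u})<\infty$ is essential.
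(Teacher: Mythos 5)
Your argument is correct and follows essentially the same route as the paper: split each word at level $\lfloor(1-\lambda)k\rfloor$, use the independence of the $2^{\lfloor(1-\lambda)k\rfloor}$ depth-$\lceil\lambda k\rceil$ subtrees to amplify the constant success probability $q_0$ of Lemma \ref{thm:GW} into a geometrically small failure probability, control the stem product with the negative moment $\E(W^{-u})$ via Markov, and handle the $L_{\bi}$ with Hoeffding. The only divergence is that the paper settles for a \emph{single} good subtree and bounds only its one stem (so $2^{-\eta u}\E(W^{-u})<1$ suffices), whereas you retain a positive proportion of good subtrees and union-bound over all stems, which forces the stronger requirement $2^{-\eta u}\E(W^{-u})<1/4$ on $\eta$; this is harmless since the lemma only asserts existence of some $\eta>0$ and the resulting $\eta(1-\lambda)$ loss is removed by letting $\lambda\to1$ in Proposition \ref{proplb}.
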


\begin{proof}
  Fix some $0<q_0<1$ and let  $k \geq k_0$ where $\lceil\lambda k_0\rceil\geq n_0$, with $n_0$ given
  by  Lemma \ref{thm:GW}.  At level $\lfloor(1-\lambda)k \rfloor$ of the binary tree there are
  $2^{\lfloor(1-\lambda)k \rfloor}$ nodes of subtrees which have depth $\lceil\lambda k\rceil$. By
  Lemma \ref{thm:GW}, for each node $\bj \in \{0,1\}^{\lfloor(1-\lambda)k \rfloor}$, there is a
  probability of at least $q_0$ such that its subtree of depth $\lceil\lambda k\rceil$ has
  `sufficiently many paths with a large $W$ product', that is with
  \begin{align}
    \#\Big\{ \bi'\in\{0,1\}^{\lceil\lambda k\rceil} : W_{\bj i'_1}\dots W_{\bj i'_1\dots i'_{\lceil\lambda k\rceil}}&\geq 2^{-(x+\delta)\lceil\lambda k\rceil} \Big\}\label{subtreea}\\
    &  \geq 2^{-\eps \lceil\lambda k\rceil}\Big(\inf_{t>0}\big( 2^{1+xt}\E(W^t) \big)
    \Big)^{\lceil\lambda k\rceil}. \label{subtree} 
  \end{align}
  Since these subtrees are independent, the probability that none of them satisfy \eqref{subtree} is
  at most $(1-q_0)^{2^{\lfloor(1-\lambda)k \rfloor}}\leq \theta_0^k$ for some $0<\theta_0 <1$.
  Otherwise, at least one subtree satisfies \eqref{subtree}, say one with node $\bj$ for some $\bj
  \in \{0,1\}^{\lfloor(1-\lambda)k \rfloor}$, choosing the one with minimal binary string if there
  are more than one. We condition on this $\bj$ existing, which depends only on $\{W_{\bi} :
  \lfloor(1-\lambda)k \rfloor < |\bi| \leq k\}$.

  Choose $\eta>0$ such that  
  $2^{-\eta u} \E(W^{-u})<1$. Using Markov's inequality,
  \[ 
    \P\Big(W_{j_1} \ldots W_{\bj}<2^{-\eta \lfloor(1-\lambda)k \rfloor}\Big)
    < \big(2^{-\eta u} \E(W^{-u})\big)^{\lfloor(1-\lambda)k \rfloor}.
  \]
  Let $M\geq 2^{-\eps \lceil\lambda k\rceil}\big(\inf_{t>0}\big( 2^{1+xt}\E(W^t) \big)
  \big)^{\lceil\lambda k\rceil} >1$ be the (random) number in \eqref{subtreea}. Recalling that
  $\P(L_{\bi} \geq 1) = p>0$ for all $\bi$, and using a standard binomial distribution estimate
  coming from Hoeffding's inequality (see Lemma \ref{thm:hoeffding}), 
  \begin{align}
    \P\Big(\big\{\# \bi'\in\{0,1\}^{\lceil\lambda k\rceil}\text{ satisfying \eqref{subtreea} with }&
    L_{\bj\bi'} < 1\big\} \geq M(1-p/2)\Big) \leq \exp\big(-\textstyle{\frac{1}{2}} p^2 M\big)\nonumber\\
    &\leq  \exp\Big(-2^{-1}\big(2^{-\eps} \inf_{t>0}( 2^{1+xt}\E(W^t) \big)^{\lceil\lambda k\rceil}p^2 \big)\Big). \nonumber
  \end{align}

  Hence, conditional on $\bj$, 
  \begin{align}
    \#\Big\{ \bi'\in\{0,1\}^{\lceil\lambda k\rceil} : W_{j_1} \ldots W_{\bj}W_{\bj i'_1}\dots W_{\bj\bi'}&L_{\bj\bi'}\geq 2^{-(x+\delta)\lceil\lambda k\rceil-\eta \lfloor(1-\lambda)k \rfloor} \nonumber\Big\}\\
    & \geq (1-p/2) 2^{-\eps \lceil\lambda k\rceil}\Big(\inf_{t>0}\big( 2^{1+xt}\E(W^t) \big)
    \Big)^{\lceil\lambda k\rceil}\label{subtreez}
  \end{align}
  with probability at least 
  \[1- \big(2^{-\eta u} \E(W^{-u})\big)^{\lfloor(1-\lambda)k \rfloor}
    -\exp\Big(-2^{-1}\big(2^{-\eps} \inf_{t>0}( 2^{1+xt}\E(W^t) \big)^{\lceil\lambda
    k\rceil}p_L\big)\Big)
    \geq 1-c_1\theta_1^k,
  \]
  for some $0<\theta_1<1$ and $c_1>0$, for all $k\geq k_0$.

  The conclusion \eqref{lbprob2} now follows, since the unconditional probability of \eqref{subtreez} is at least
  $1- \theta_0^k - c_1\theta_1^k \geq 1-\theta^k$, on choosing $\max\{\theta_0,\theta_1\}<\theta<1$,
  and increasing $k_0$ if necessary to ensure that 
  $\theta^k \geq \theta_0^k +c_1\theta_1^k$ for all $k\geq k_0$.
\end{proof}
Using Lemma \ref{thm:lb2} we can obtain the lower bound for Theorem \ref{thm:dimea}.

\begin{prop}\label{proplb}
  Let $\alpha>0$. Under the assumptions in Theorem \ref{thm:dimea}, almost surely,
  \[
    \lbd f(E^\alpha) \geq \sup_{x>0} \frac{1+\inf_{t>0}\left(x
    t+\log_2\mathbb{E}(W^t)\right)}{1+x+(1+\gamma)/\alpha}.
  \]
\end{prop}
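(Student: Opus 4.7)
The plan is to exploit the recursive structure of $E^\alpha$: off the stem $\{0^k\}_{k\in\mathbb{N}}$, a full binary subtree of depth $\lfloor\alpha k\rfloor$ branches at each node $0^{k-1}1$, its leaves being the points $0.0^{k-1}1\bj00\ldots\in E^\alpha$ with $\bj\in\{0,1\}^{\lfloor\alpha k\rfloor}$. The strategy is to apply Lemma \ref{thm:lb2} to each such subtree and combine its conclusion with almost-sure bounds on the stem product to produce a well-separated subset of $f(E^\alpha)$ of large cardinality at the appropriate scale.

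Fix $x\in(0,\gamma)$ with $\inf_{t>0}(2^{1+xt}\E(W^t))>1$, and let $\delta,\eps>0$ be small and $\lambda<1$ close to $1$. By the strong law of large numbers applied to the i.i.d.\ sequence $\{\log_2 W_{0^j}\}_{j\geq 1}$, almost surely $W_0 W_{00}\cdots W_{0^{k-1}}\geq 2^{-k(\gamma+\eps)}$ for all sufficiently large $k$. Since $\{W_{0^{k-1}1}\}_k$ are i.i.d.\ copies of $W$ and $\E(W^{-u})<\infty$ by \eqref{extraassump}, Markov's inequality combined with Borel--Cantelli gives $W_{0^{k-1}1}\geq 2^{-k\eps}$ for all large $k$ almost surely. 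The subtrees rooted at the nodes $\{0^{k-1}1\}_{k\in\mathbb{N}}$ are stochastically independent, since their vertex sets are pairwise disjoint, so applying Lemma \ref{thm:lb2} to each one with depth parameter $\lfloor\alpha k\rfloor$, the failure event at level $k$ has probability $\theta^{\lfloor\alpha k\rfloor}$, summable in $k$; a further Borel--Cantelli step then gives simultaneous success for all but finitely many $k$. On this full-measure event, for every large $k$ we obtain at least
\[
N_k \;\geq\; 2^{-\eps\alpha k}\Big(\inf_{t>0}\big(2^{1+xt}\E(W^t)\big)\Big)^{\lfloor\alpha k\rfloor}
\]
words $\bi\in\{0,1\}^{\lfloor\alpha k\rfloor}$ (up to absolute constants) for which $|f(I_{0^{k-1}1\bi})|\geq r_k$, where $-\log_2 r_k \leq k(1+\gamma+\eps)+\alpha k(1+x+\delta)+O(\alpha k\,\eta(1-\lambda))$.

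Since $f$ is strictly increasing, the images $\{f(I_{0^{k-1}1\bi})\}_{\bi}$ are pairwise disjoint and linearly ordered; the left endpoints of the $N_k$ large ones, each lying in $f(E^\alpha)$, are therefore pairwise separated by at least $r_k$. Hence $N_r(f(E^\alpha))\geq N_k$ for all $r\leq r_k$, and since $-\log r_{k+1}/(-\log r_k)\to 1$, passing to the liminf in $k$ and then letting $\eps,\delta,\eta\to 0$ and $\lambda\to 1$ yields
\[
\lbd f(E^\alpha) \;\geq\; \frac{1+\inf_{t>0}(xt+\log_2\E(W^t))}{1+x+(1+\gamma)/\alpha}.
\]
Taking the supremum over admissible $x\in(0,\gamma)$ and extending by continuity (Lemma \ref{thm:monotone}) to $x\geq\gamma$ produces the desired bound. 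The main difficulty is ensuring that the high-probability estimates hold \emph{simultaneously} for all large $k$ rather than merely infinitely often, which is exactly where the independence of the stem subtrees and the moment assumptions in \eqref{extraassump} do the essential work, through the two Borel--Cantelli applications above and the one inside Lemma \ref{thm:lb2}.
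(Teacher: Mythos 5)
Your argument is correct and follows essentially the same route as the paper: the same stem-plus-subtrees decomposition, Lemma \ref{thm:lb2} applied to the depth-$\lfloor\alpha k\rfloor$ subtree rooted at $0^{k-1}1$ with a Borel--Cantelli step over $k$, the strong law on the stem, and the same choice of scales $r_k$ followed by letting $\eps,\delta\to0$ and $\lambda\to1$. One cosmetic caveat: the $\eta$ from Lemma \ref{thm:lb2} cannot literally be sent to $0$ (it must satisfy $2^{-\eta u}\E(W^{-u})<1$), but since its contribution is $O(\eta(1-\lambda))$ the limit $\lambda\to1$ already removes it, exactly as in the paper's computation.
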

\begin{proof}
  Fix $0<x<\gamma$ and let $\eps,\delta, \eta, \lambda, \theta$ be as in Lemma \ref{thm:lb2}.
  For $k\in \mathbb{N}$ let $\bl_k := 0^{k-1}1 \in \{0,1\}^k$.
  Replacing  $k$ by $\lfloor \alpha k \rfloor$ in  \eqref{lbprob2} and noting that $\sum_1^\infty
  \theta^{\lfloor \alpha k \rfloor} <\infty$, it follows from the Borel-Cantelli lemma that almost
  surely there exists a random $K_1<\infty$ such that for all $k\geq K_1$,
  \begin{align}
    \#\Big\{ \bi\in\{0,1\}^{\lfloor \alpha k\rfloor} :  W_{\bl_k i_1}\dots W_{\bl_k \bi}L_{\bl_k \bi}&\geq a2^{-(\lambda(x+\delta)+\eta (1-\lambda))\lfloor \alpha k\rfloor} \Big\}\nonumber\\
    & \geq b 2^{-\eps\lambda \lfloor \alpha k\rfloor}\Big(\inf_{t>0}\big( 2^{1+xt}\E(W^t) \big)
    \Big)^{\lambda \lfloor \alpha k\rfloor}.\label{lbprob2x}
  \end{align}
  Here the numbers $a,b>0$, which are introduced for notational convenience so we can replace
  $\lceil\lambda k\rceil$ by $\lambda k$ and $\lfloor(1-\lambda)k \rfloor$ by $(1-\lambda)k$ in
  \eqref{lbprob2}, depend on $x, \eps,\delta, \eta, \lambda$ but not on $k$. 

  By the strong law of large numbers, $(W_{0}W_{00}\dots W_{0^{k-1}}W_{\bl_k})^{1/k} \to 2^{-\gamma}$ almost
  surely, so almost surely there exists $K_2\in\mathbb{N}$ such that $W_{0}W_{00}\dots
  W_{0^{k-1}}W_{\bl_k}
  \geq 2^{-(\gamma-\eps)k}$  for all $k\geq K_2$.

  For $k \in\mathbb{N}$ let
  \[
    r_k=2^{-(k+\lfloor \alpha k\rfloor)} \cdot 2^{-(\gamma-\eps)k} \cdot
    a2^{-(\lambda(x+\delta)+\eta (1-\lambda))\lfloor \alpha k\rfloor}.
  \]
  Then
  \begin{align*}
    N_{r_k}(f(E^\alpha)) 
    &\geq \#\big\{ \bj=\bl_k\bi 0\dots
      \in\Sigma_\alpha : \bi\in\{0,1\}^{\lfloor \alpha k\rfloor} ,
    |f(I_{\bj})|\geq r_k\big\}
    \\&\geq
    \#\big\{ \bj= \bl_k\bi  : \bi\in\{0,1\}^{\lfloor \alpha k\rfloor}  ,
    2^{-(k+\lfloor \alpha k\rfloor)}W_{j_1}W_{j_1 j_2}\dots W_{\bj}L_{\bj}\geq r_k\big\}
    \\&
    \geq 
    \#\big\{\bi\in\{0,1\}^{\lfloor \alpha k\rfloor}  : W_{0}W_{00}\dots W_{0^{k-1}}W_{\bl_k}\geq 2^{-(\gamma-\eps)k}\\
      &
      \qquad\qquad \text{ and } W_{\bl_k i_1}\dots W_{\bl_k \bi}L_{\bl_k \bi}\geq
    a2^{-(\lambda(x+\delta)+\eta (1-\lambda))\lfloor \alpha k\rfloor}\big\}\\
    & 
    \geq b 2^{-\eps\lambda \lfloor \alpha k\rfloor}\Big(\inf_{t>0}\big( 2^{1+xt}\E(W^t) \big)
    \Big)^{\lambda \lfloor \alpha k\rfloor},
  \end{align*}
  provided that $k\geq \max\{K_1,K_2\}$, using \eqref{lbprob2x}.

  Since $r_k\searrow 0$ no faster than geometrically, it suffices to compute the (lower) box-counting
  dimension along the sequence $r_k$. Hence
  \begin{align*}
    \lbd f(E^\alpha)
    &\geq
    \liminf_{k\to\infty} \frac{\log_2 N_{r_k}(f(\pi\Sigma_\alpha))}{-\log_2 r_k}
    \\
    &\geq  \liminf_{k\to\infty}
    \frac{\log_2 b -\eps\lambda \lfloor \alpha k\rfloor +  \lambda \lfloor \alpha k\rfloor  \log_2  \inf_{t>0}\big( 2^{1+xt}\E(W^t) \big)}
    {(k+\lfloor \alpha k\rfloor) + (\gamma-\eps)k  + (\lambda(x+\delta)+\eta (1-\lambda))\lfloor \alpha k\rfloor -\log_2 a }
    \\
    & = \frac{-\eps \lambda \alpha +\lambda \alpha\big(1+ \inf_{t>0}(xt+\log_2\E(W^t))\big)}
    {1+ \alpha  +\gamma-\eps +  \alpha(\lambda(x+\delta)+\eta (1-\lambda))}
    \\
    &= \frac{\lambda(1-\eps)  +\lambda \big(\inf_{t>0}(xt+\log_2\E(W^t))\big)}
    {1+ (1 +\gamma-\eps)/\alpha +  \lambda(x+\delta)+\eta (1-\lambda)}
  \end{align*}
  almost surely, on letting $k\to \infty$ and dividing through by $\alpha$. This is valid for all
  $\eps,\delta>0$ and $0<\lambda<1$, so  we obtain
  \begin{equation}\label{eq:almostThere}
    \lbd f(E^\alpha)
    \geq \frac{1+\inf_{t>0}(x t +\log_2 \E(W^t))}{1+x+(\gamma+1)/\alpha}
  \end{equation}
  for all $0<x<\gamma$.
  However, for $x\geq \gamma$
  the infimum in \eqref{eq:almostThere} is $0$ by Lemma \ref{thm:monotone}, whereas the denominator is increasing in $x$. Thus
  the supremum is achieved taking $0<x<\gamma$, as required.
\end{proof} 

\begin{proof}[Proof of Theorem \ref{thm:dimea}]
  For fixed $\alpha$, Theorem \ref{thm:dimea} follows immediately from Propositions \ref{propnew}  and \ref{proplb}.
  Further, with probability $1$, \eqref{dimgen} holds simultaneously for all
  countable subsets $A\subset (0,\infty)$ and so in particular for $\mathbb{Q}^+$. Since
  \eqref{dimgen} is continuous in $p$, it must hold for all $p>0$ simultaneously and so Theorem \ref{thm:dimea}
  holds.
\end{proof}

\subsubsection{Box dimension of  $f(E_\ba)$ for  $\ba \in S_p$ }

It remains to extend  Theorem \ref{thm:dimea} to Theorem \ref{thm:dimgen} which we do using the `eventually separating' notion.

\begin{proof}[Proof of Theorem \ref{thm:dimgen}]

  For $\alpha >0$ let
  $$\phi(\alpha)= \sup_{x>0} \frac{1+\inf_{t>0}\left(xt+\log_2\mathbb{E}(W^t)\right)}{1+x+(1+\gamma)/\alpha}.$$
  Let $\ba \in S_p$ for $p>0$ and let $0< p_1 <p<p_2$. Then $E^{1/p_1} \in S_{p_1}$ and   $E^{1/p_2}
  \in S_{p_2}$, see \eqref{aln}. By Lemma \ref{thm:decreasingseqs},  $E^{1/p_1}$ eventually separates $\ba$, and 
  $\ba$ eventually separates $E^{1/p_2}$. Since $f$ is almost surely monotonic, it preserves
  `eventual separation' for all pairs of sequences, so $f(E^{1/p_1})$ eventually separates $f(\ba)$  and $f(\ba)$ eventually
  separates $f(E^{1/p_2})$. By Lemma \ref{thm:eventuallySeparates},
  $$\phi(1/p_2)\leq \bdd  f(E^{1/p_2})  \leq  \lbd f(E_\ba) \leq \ubd f(E_\ba)\leq \bdd f(E^{1/p_1})\leq \phi(1/p_1).$$
  By Lemma \ref{thm:monotone} $\phi$ is continuous in $\alpha$, so taking $p_1,p_2$ arbitrarily
  close to $p$, we conclude that $\bdd f(E_\ba) = \phi(1/p)$.

  Further, since `eventual  separation' is preserved almost surely for all pairs of sequences $\ba$
  and $\ba'$, the box-counting dimension of $E_{\ba}$ is constant for all $\ba\in S_p$.
  Applying Theorem \ref{thm:dimea} get that $\bdd f(E_{\ba}) = \phi(1/p)$ for all $\ba\in S_p$ and
  $p>0$ simultaneously with probability $1$.
\end{proof}

\subsection{Decreasing sequences}
\label{sec:otherproofs}
We now prove the statements in Section \ref{sec:decreasingsets}.

\begin{proof}[Proof of Lemma \ref{thm:eventuallySeparates}]
  We may assume that $n_0=1$ in the definition of $\bb$ eventually separating $\ba$, since removing a finite number of points from a sequence does not affect
  its box-counting dimensions. For $r>0$ and $E$ a bounded subset of $\mathbb{R}$ let $N_r(E)$ be the maximal
  number of points in an $r$-separated subset of $E$, and let $\{a_{n_i}\}_{i=1}^{N_r(A)}$
  be a maximal $r$-separated subset of ${\ba}$ (with $n_i$ increasing). Then for each
  $1\leq i \leq N_r(A)-1$ there exists $b_{m_i} \in {\bb}$ such that  $a_{n_{i +1}}\leq b_{m_i}  \leq a_{n_i}$. Then 
  $\{b_{m_1},b_{m_3},b_{m_5},\ldots,b_{m_N}\}$ is an $r$-separated set, where $N$ is the largest odd
  number less than $N_r({\ba})$. It follows that
  $N_r({\bb}) \geq \frac{1}{2}(N+1)\geq \frac{1}{2}(N_r({\ba})-2)$. The inequalities
  now follow from the definition of the lower box-counting dimension $\lbd E = \varliminf_{r\to 0}
  \log N_r(E) /-\log r$, and similarly for upper box-counting dimension.
\end{proof} 

\begin{proof}[Proof of Theorem \ref{thm:decreasingseqs}]
  Given $\e>0$ there is $n_0\in \mathbb{N}$ such that if $n\geq n_0$ then 
  $$ n^{-p-\e} \ \leq a_{n+1} \ \leq a_n\  \leq n^{-p+\e}.$$ 
  Since that gaps of $\ba$ are decreasing, by comparing $a_n-a_{n+1}$ with the $n-\lfloor n^{1-\e} \rfloor$ gaps between $a_n$ and  $a_{\lfloor n^{1-\e} \rfloor}$, we see that
  $$a_n -a_{n+1} 
  \ \leq\  \frac{ a_{\lfloor n^{1-\e} \rfloor}-a_n}{n-\lfloor n^{1-\e} \rfloor}
  \ \leq\  \frac{ \lfloor n^{1-\e}\rfloor^{(-p+\e)}}{n-\lfloor n^{1-\e} \rfloor}
  \ \leq\ 2 n^{-p-1 +\e+\e^2}
  \ \leq\ 2 x^{(p+1 -\e-\e^2)/(p+\e)},
  $$
  for all $x\in [a_{n+1},a_n]$, for all sufficiently large $n$, equivalently all sufficiently small $x>0$.
  Hence by redefining $\e$,  given $\e>0$ the right-hand inequality of 
  \be\label{gapsa}
  x^{1+1/p +\e} \ \leq\ a_n -a_{n+1} \ \leq\  x^{1+1/p -\e}\qquad (x\in [a_{n+1},a_n])
  \ee
  holds for all sufficiently large $n$; the left-hand inequality following from a similar estimate. For the sequence $\bb$
  \[
    x^{1+1/q +\e} \ \leq\ b_m -b_{m+1} \ \leq\  x^{1+1/q -\e}\qquad (x\in [b_{m+1},b_m]).
  \]
  Choose $0<\e <\frac{1}{2}(\frac{1}{q} - \frac{1}{p})$, and take $x$ small enough, that is $n,m$ large enough, for \eqref{gapsa} and  \eqref{gapsa} to hold. For such an $n$, choose $x\in [a_{n+1},a_n]$. Taking $m$ such that $x\in [b_{m+1},b_m]$,
  $$b_m -b_{m+1} \ \leq\  x^{1+1/q -\e}\ <\  x^{1+1/p +\e} \ \leq\ a_n -a_{n+1}.$$
  Thus the interval $[a_{n+1},a_n]$ intersects the shorter interval $[b_{m+1},b_m]$, so either $b_{m} \in [a_{n+1},a_n]$ or $b_{m+1} \in [a_{n+1},a_n]$, so $\bb$ eventually separates $\ba$.
\end{proof}

\subsection*{Acknowledgements}
The authors thank the anonymous referee for their many helpful suggestions that improved this
manuscript.
The authors further thank Xiong Jin for his comments on an earlier draft.

\bibliographystyle{plain}

\end{document}